\def\R{\mathbb{R}}
\def\C{\mathbb{C}}
\def\ve{\mathsf{e}}
\def\vv{\mathsf{v}}
\def\vx{\boldsymbol{\mathrm{x}}}
\def\vomega{\boldsymbol{\omega}}
\def\vOmega{\boldsymbol{\Omega}}
\def\vy{\mathsf{y}}
\def\vw{\mathsf{w}}
\def\vu{\mathsf{u}}
\def\mudown{\check{\mu}}
\def\vbeta{\boldsymbol{\beta}}
\def\vtheta{\boldsymbol{\theta}}
\def\valpha{\boldsymbol{\alpha}}
\def\vOmega{\boldsymbol{\Omega}}
\def\vX{\boldsymbol{\mathrm{X}}}
\def\ba{\boldsymbol{\mathrm{a}}}
\def\di{\partial}
\def\ri{\mathrm{i}}
\renewcommand{\Im}{\operatorname{Im}}
\renewcommand{\Re}{\operatorname{Re}}
\def\adj{\operatorname{adj}}
\def\II{\operatorname{II}} %second fundamental form
\def\calS{\mathcal{S}}
\def\calT{\mathcal{T}}
\def\F{\mathfrak{F}}
\def\I{\mathcal{I}}
\def\calI{\mathcal{I}}
\def\J{\EuScript{J}}
\def\V{\mathscr{V}}
\def\W{\mathscr{W}}
\def\D{\mathcal{D}}
\def\w{\omega}
\def\tw{\tilde\omega}
\def\wt{\tilde\omega}
\def\ghat{\widehat{g}}
\def\hatnabla{\widehat{\nabla}}
\def\tr{\operatorname{tr}}
\def\Mhat{\widehat{M}}
\newcommand{\intprod}{\mathbin{\raisebox{.4ex}{\hbox{\vrule height .5pt width 5pt depth 0pt \vrule height 3pt width .5pt depth 0pt}}}}
\newtheorem{Theorem}{Theorem}[section]
\newtheorem{Lemma}[Theorem]{Lemma}
\newtheorem{Proposition}[Theorem]{Proposition}
\theoremstyle{definition}
\newtheorem{Definition}[Theorem]{Definition}
\newtheorem{Example}[Theorem]{Example}
\newtheorem{Remark}[Theorem]{Remark}}
\numberwithin{equation}{section}
\begin{document}
%\allowdisplaybreaks

\newcommand{\arXivNumber}{2006.15119}

\renewcommand{\PaperNumber}{023}

\FirstPageHeading

\ShortArticleName{Twisted-Austere Submanifolds in Euclidean Space}

\ArticleName{Twisted-Austere Submanifolds in Euclidean Space}

\Author{Thomas A.~IVEY~$^{\rm a}$ and Spiro KARIGIANNIS~$^{\rm b}$}

\AuthorNameForHeading{T.A.~Ivey and S.~Karigiannis}

\Address{$^{\rm a)}$~Department of Mathematics, College of Charleston, USA}
\EmailD{\href{mailto:iveyt@cofc.edu}{iveyt@cofc.edu}}
\URLaddressD{\url{http://iveyt.people.cofc.edu}}

\Address{$^{\rm b)}$~Department of Pure Mathematics, University of Waterloo, Canada}
\EmailD{\href{mailto:karigiannis@uwaterloo.ca}{karigiannis@uwaterloo.ca}}
\URLaddressD{\url{http://www.math.uwaterloo.ca/~karigiannis}}

\ArticleDates{Received October 13, 2020, in final form March 02, 2021; Published online March 10, 2021}

\Abstract{A twisted-austere $k$-fold $(M, \mu)$ in ${\mathbb R}^n$ consists of a $k$-dimensional sub\-mani\-fold~$M$ of ${\mathbb R}^n$ together with a closed $1$-form $\mu$ on~$M$, such that the second fundamental form~$A$ of~$M$ and the $1$-form $\mu$ satisfy a particular system of coupled nonlinear second order PDE. Given such an object, the ``twisted conormal bundle'' $N^* M + \mu$ is a special Lagrangian submanifold of ${\mathbb C}^n$. We~review the twisted-austere condition and give an explicit example. Then we focus on twisted-austere 3-folds. We~give a geometric description of all solutions when the ``base''~$M$ is a cylinder, and when~$M$ is austere. Finally, we~prove that, other than the case of a generalized helicoid in~${\mathbb R}^5$ discovered by Bryant, there are no other possibilities for the base~$M$. This gives a complete classification of twisted-austere $3$-folds in~${\mathbb R}^n$.}

\Keywords{calibrated geometry; special Lagrangian submanifolds; austere submanifolds; exterior differential systems}

\Classification{53B25; 53C38; 53C40; 53D12; 58A15}

\section{Introduction}

Special Lagrangian submanifolds are a special class of $n$-dimensional submanifold in $\C^n$, and more generally in Calabi--Yau $n$-folds. They were introduced by Harvey--Lawson~\cite{HL} and were the first modern example of \emph{calibrated} submanifolds. They are a class of \emph{minimal} (vanishing mean curvature) submanifolds characterized by a first order nonlinear PDE, and in fact are absolutely locally volume minimizing in their homology class. Special Lagrangian submanifolds also play a key role in \emph{mirror symmetry} through the Strominger--Yau--Zaslow conjecture~\cite{SYZ}. They have been extensively studied by many authors. An excellent reference summarizing much of the work on special Lagrangian geometry up to the time of its publication is the textbook~\cite{J-book} of Joyce on calibrated geometry.

One particular construction of special Lagrangian submanifolds in $\C^n$ first appeared in~\cite{HL} and is known as the \emph{conormal bundle construction}. Given a $k$-dimensional submanifold $M$ of~$\R^n$, Harvey--Lawson showed that its conormal bundle $N^*M$ is special Lagrangian in~\mbox{$T^* \R^n = \C^n$} if~and only if $M$ is \emph{austere}, which means that all the odd degree elementary symmetric polynomials in the eigenvalues of the second fundamental form vanish. Note that this is in general a~\emph{fully nonlinear} second order PDE on the immersion of $M$ in $\R^n$. The~conormal bundle construction was later reviewed in detail, and generalized to the exceptional holonomies $\mathrm{G}_2$ and $\mathrm{Spin}(7)$, by Ionel--Karigiannis--Min-Oo in~\cite{IKM}. Austere submanifolds in Euclidean space have been studied by several authors, including Bryant~\cite{Br} and Ionel--Ivey~\cite{II,II2}.

A generalization of the conormal bundle construction was introduced by Borisenko~\cite{Bo} and later significantly extended by Karigiannis--Leung~\cite{KL}. The~idea is as follows. Let $M$ be a~$k$-dimen\-sional submanifold of $\R^n$. Then $T^* \R^n |_M = N^* M \oplus T^* M$. Let $\mu$ be a \emph{closed} $1$-form on~$M$. Define the ``twisted conormal bundle'' to be the $n$-dimensional submanifold $N^* M + \mu = \{ (\nu_p, \mu_p) \,|\, \nu_p \in N^*_p M \}$ of $T^* \R^n = \C^n$. This is the total space of an affine bundle over $M$ whose fibres are affine translates of the conormal spaces, translated by the $1$-form $\mu$. In~\cite{KL} it was proved that $N^* M+\mu$ is special Lagrangian if and only if the second fundamental form $A$ of $M$ in $\R^n$ and the $1$-form $\mu$ satisfy a system of coupled second order fully nonlinear PDE, which we call the \emph{twisted austere} equations. This result is stated explicitly in Theorem~\ref{theoremzero}. (Borisenko had only considered the case when $\mu$ is exact, $n=3$, and $k=2$.)

Both the original construction of Harvey--Lawson and the ``twisted version'' of Borisenko and Karigiannis--Leung produce examples of \emph{ruled} special Lagrangian submanifolds. Joyce~\cite{J-ruled} has also studied ruled special Lagrangian submanifolds in $\C^n$.

We consider the case of \emph{twisted-austere pairs} $\big(M^k, \mu\big)$ in $\R^n$ for $k=1,2,3$ and any $n$. The~cases $k=1,2$ are trivial to classify completely. The~case $k=3$ is significantly more involved. We obtain a complete classification and give a geometric description of all possibilities.

\textbf{Organization of the paper and summary of results.} In Section~\ref{sec:preliminaries} we review the twisted-austere condition, and completely describe the cases $k=1$ and $k=2$, as well as the case when~$M$ is totally geodesic. We~also present an explicit nontrivial solution when $k=2$ and $n=3$, giving a special Lagrangian submanifold in $\C^3$. The~remainder of the paper is concerned with the non totally geodesic case when $k=3$.

Section~\ref{sec:tw3folds} establishes some general results on twisted-austere pairs $\big(M^3, \mu\big)$ where $M$ is not totally geodesic. The~main result is Theorem~\ref{theoremone}, where we show that $M$ is either a generalized helicoid swept out by planes in $\R^5$, or else $n$ is arbitrary and $M$ is ruled by lines. Section~\ref{cylindersec} is concerned with the particular case when~$M$ is a cylinder. The~main result is Theorem~\ref{cylinderthm}, where we give a geometric characterization of this case, in terms of a minimal surface $\Sigma$ in $\R^{n+1}$ and a closed $1$-form $\lambda$ on $\Sigma$ with prescribed codifferential.

Section~\ref{austeresec} is the heart of the paper, where we comprehensively study the case in which the base $M$ is austere. This study breaks up naturally into two cases, called the ``split case'' and the ``non-split case'', characterized by algebraic properties on the covariant derivative $\nabla \mu$. Each case then breaks up into subcases. In the split case, $M$ can be either a cylinder, a cone, or~a~``twisted cone''. The~first two subcases also occur in the non-split case. The~two cylinder subcases are related to the results of Section~\ref{cylindersec}. In all subcases the twisted-austere pairs $\big(M^3, \mu\big)$ with austere base $M$ are related to geometric data on a surface $\Sigma$, being the cross-section of the cylinder or~the link of the (twisted) cone. Using this data, the $1$-form $\mu$ is described explicitly.

Finally in Section~\ref{sec:classification} we outline the proof of our classification, which is Theorem~\ref{thm:class}. We~prove that the pairs $\big(M^3, \mu\big)$ studied in the earlier sections are the only possibilities. Three appendices follow, collecting various technical results that are used in the main body of the paper.

\section{Preliminaries} \label{sec:preliminaries}

In this section we review the \emph{twisted-austere} condition for a pair $\big(M^k, \mu\big)$ where $M^k$ is a $k$-dimensional submanifold of $\R^n$ and $\mu$ is a smooth $1$-form on $M$. We~also discuss the cases $k=1$ and $k=2$ in detail, as well as the case when $M^k \subset \R^n$ is totally geodesic. The remainder of the paper is concerned with the case $k=3$ for $M^k$ not totally geodesic.

\begin{Definition} %\label{defn:twisted-austere}
Let $M^k$ be a $k$-dimensional submanifold of $\R^n$ and a let $\mu$ be a smooth $1$-form on $M$. Define $L = N^* M + \mu$ to be the $n$-dimensional submanifold of $T^* \R^n$ given by
\begin{gather} \label{eq:twisted-austere}
N^* M + \mu = \big\{(x, \xi + \mu_x) \in T^* \R^n|_M \,|\, x \in M,\, \xi \in N^*_x M \big\}.
\end{gather}
We say that $(M,\mu)$ is a \emph{twisted-austere pair} if $L = N^*M+ \mu$ is a special Lagrangian submanifold inside $T^* \R^n$ with respect to some phase. Following~\cite{KL}, we~refer to this as the \emph{Borisenko construction}.
\end{Definition}

It is shown in~\cite{KL} that $L$ is Lagrangian if and only if $\nabla \mu$ is a symmetric tensor on $M$, that is ${\rm d} \mu = 0$. The~conditions under which $L$ is special Lagrangian are more involved. In what follows, let $A^\nu = \nu \cdot \II$ denote the second fundamental form of $M$ in the normal direction $\nu$, and let $B = \nabla \mu$. We~use the same letters to denote the matrices that represent these covariant tensors with respect to a local orthonormal frame field $\ve_1, \dots, \ve_k$ on $M$. (For example, $B_{ij} = B(\ve_i, \ve_j)$, and the Lagrangian condition
is equivalent to $B$ being a symmetric matrix.)

\begin{Theorem}[Karigiannis--Leung~\cite{KL}] \label{theoremzero}
Fix a \emph{phase angle} $\theta \in [0, 2 \pi)$. Let $C=I + \ri B$, and define the \emph{cophase angle} $\phi$ by
\begin{gather} \label{angleeq}
\phi = \theta - (n-k) \dfrac{\pi}2.
\end{gather}
Then $(M, \mu)$ is a twisted-austere pair with phase ${\rm e}^{\ri \theta}$ if and only if the following three conditions all hold:
\begin{gather} \label{eq:tw1}
{\rm d}\mu = 0,
\\ \label{eq:tw2}
\Im \big({\rm e}^{\ri \phi} \det C \big) = 0,
\\ \label{eq:tw3}
\Im \big( \ri^j \sigma_j \big(A^\nu C^{-1} \big) \big) = 0, \qquad \text{for all $\nu$ and all $j=1, \dots, k$}.
\end{gather}
Here $\sigma_j$ denotes the $j^{\text{th}}$ elementary symmetric function of the eigenvalues of a matrix, so in particular $\sigma_1 = \tr$ and $\sigma_k = \det$. $($See Appendix~$\ref{sec:sigma-adj}$ for more details.$)$
\end{Theorem}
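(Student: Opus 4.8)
I would prove this by parametrizing $L = N^*M + \mu$ explicitly with a moving frame adapted to $M \subset \R^n$, and then directly computing the restrictions to $L$ of the canonical symplectic form $\omega = \sum_j {\rm d}x^j \wedge {\rm d}\xi_j$ and of the holomorphic volume form $\Omega = {\rm d}z^1 \wedge \cdots \wedge {\rm d}z^n$ under the standard identification $T^*\R^n \cong \C^n$. Fix a point of $M$ and choose an orthonormal frame $\ve_1, \dots, \ve_n$ of $\R^n$ along $M$ with $\ve_1, \dots, \ve_k$ tangent and $\ve_{k+1}, \dots, \ve_n$ normal to $M$; let $\ve^1, \dots, \ve^n$ be the dual coframe. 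Then $L$ is parametrized locally by $(x, t) \mapsto \big(x, \sum_{a=k+1}^n t_a \ve^a_x + \mu_x\big)$ with $x \in M$ and $t = (t_{k+1}, \dots, t_n) \in \R^{n-k}$, and as $t$ varies the corresponding conormal covector is metrically dual to the normal vector $\nu = \sum_a t_a \ve_a$, which sweeps out the entire normal space.

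\emph{Lagrangian condition.} Differentiating the parametrization and using the flat connection on $T^*\R^n$ together with the Gauss--Weingarten equations of $M \subset \R^n$, the tangent vectors $\di_{t_a}$ are purely vertical (zero base component), while $\di_{x^i}$ has base component $\ve_i$ and a fibre component whose projection onto the $\ve^j$ with $j \le k$ is $\big(B + A^\nu\big)_{ij}$, the remaining part involving the normal connection of $M$ and the shape operator paired with $\mu$. Feeding these into $\omega$, the vertical--vertical and vertical--horizontal pairings vanish identically, and the horizontal--horizontal pairing at the frame point is $\big(B + A^\nu\big)_{i'i} - \big(B + A^\nu\big)_{ii'} = B_{i'i} - B_{ii'}$, since $A^\nu$ is symmetric. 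Hence $\omega|_L = 0$ if and only if $B = \nabla\mu$ is symmetric, i.e.\ ${\rm d}\mu = 0$; this is \eqref{eq:tw1}, exactly the Lagrangian condition of~\cite{KL}.

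\emph{Special condition.} Evaluating $\Omega$ on $\big(\di_{x^1}, \dots, \di_{x^k}, \di_{t_{k+1}}, \dots, \di_{t_n}\big)$ yields the determinant of an $n \times n$ matrix that, because the $\di_{t_a}$ are purely vertical, is block lower triangular: its upper-right $k \times (n-k)$ block vanishes, its lower-right block is $\ri I_{n-k}$, and its upper-left $k \times k$ block is $I + \ri\big(B + A^\nu\big)$; in particular the messy lower-left block plays no role. Using $C = I + \ri B$, which is invertible since its eigenvalues $1 + \ri b_\alpha$ are nonzero, together with the expansion $\det(I + \ri X) = \sum_{j=0}^k \ri^{\,j} \sigma_j(X)$ and the identity $\sigma_j\big(C^{-1} A^\nu\big) = \sigma_j\big(A^\nu C^{-1}\big)$, we obtain
\begin{gather*}
\Omega|_L \ \propto\ \ri^{\,n-k} \det\!\big(I + \ri(B + A^\nu)\big) = \ri^{\,n-k} \det C \, \det\!\big(I + \ri\, C^{-1} A^\nu\big) \\
= \ri^{\,n-k} \det C \sum_{j=0}^k \ri^{\,j}\, \sigma_j\big(A^\nu C^{-1}\big),
\end{gather*}
up to a positive real factor. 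The factor $\ri^{\,n-k}$ contributed by the $n-k$ fibre directions is precisely what the passage~\eqref{angleeq} from the phase angle $\theta$ to the cophase angle $\phi$ absorbs, so (in the conventions of~\cite{KL}) the special-Lagrangian equation $\Im\big({\rm e}^{\ri\theta}\Omega|_L\big) = 0$ becomes
\begin{gather*}
\Im\!\left({\rm e}^{\ri\phi} \det C \sum_{j=0}^k \ri^{\,j}\, \sigma_j\big(A^\nu C^{-1}\big)\right) = 0 \qquad \text{for every normal vector $\nu$.}
\end{gather*}
When $\mu = 0$ one has $C = I$ and this reduces to the Harvey--Lawson characterization of austere $M$, which is a useful sanity check.

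\emph{Disentangling the conditions.} Since $\sigma_j\big(A^\nu C^{-1}\big)$ is homogeneous of degree $j$ in $\nu$, replacing $\nu$ by $s\nu$ turns the last display into a polynomial identity in $s \in \R$, so it holds for all $\nu$ iff each homogeneous piece vanishes. The degree-zero piece is $\Im\big({\rm e}^{\ri\phi} \det C\big) = 0$, which is \eqref{eq:tw2}; it forces ${\rm e}^{\ri\phi} \det C$ to be a nonzero real scalar, so for $j \ge 1$ the degree-$j$ piece equals $\big({\rm e}^{\ri\phi} \det C\big) \Im\big(\ri^{\,j} \sigma_j(A^\nu C^{-1})\big)$, which vanishes for all $\nu$ iff $\Im\big(\ri^{\,j} \sigma_j(A^\nu C^{-1})\big) = 0$ for all $\nu$ and all $j = 1, \dots, k$ — this is \eqref{eq:tw3}. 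Combined with \eqref{eq:tw1}, this yields the claimed equivalence. The main obstacle is the moving-frame computation behind the ``special condition'' step: one must carefully isolate the tangential $B + A^\nu$ block of the fibre derivatives and confirm that all normal-bundle terms — including the normal part of $\nabla^{\R^n}\mu$ — land in the lower-left block annihilated by the block-triangular structure; getting the sign of $\ri^{\,n-k}$ and its interaction with the phase convention correct (so that the shift is exactly \eqref{angleeq}) is the other delicate point.
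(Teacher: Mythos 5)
The paper states this result as a citation to Karigiannis--Leung~\cite{KL} and does not reproduce a proof, so there is no in-paper argument to compare against. Your proposal correctly reconstructs the standard moving-frame computation from~\cite{KL}: parametrizing $L$ by base and fibre coordinates, the $\omega$-pairing of two base directions reduces to the antisymmetric part of $B \pm A^\nu$ (hence of $B$ alone, since $A^\nu$ is symmetric), while the pullback of ${\rm d}z^1\wedge\cdots\wedge{\rm d}z^n$ is block lower triangular with blocks $I + \ri(B\pm A^\nu)$ and $\ri I_{n-k}$, giving $\ri^{\,n-k}\det C\sum_{j}\ri^{\,j}\sigma_j\big(A^\nu C^{-1}\big)$ up to a real positive factor; the $\ri^{\,n-k}$ is absorbed by the shift from $\theta$ to $\phi$, and the separation of conditions by degree of homogeneity in $\nu$, together with $e^{\ri\phi}\det C$ being real and nonzero, completes the argument. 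One harmless sign caveat: the Weingarten formula actually gives the tangential part of the fibre derivative of $\xi+\mu$ as $B - A^\nu$ rather than $B + A^\nu$, but since the conditions are required for all normal directions and $A^{-\nu}=-A^\nu$, this does not affect any of the stated conclusions.
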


\begin{Remark}
In~\cite[Theorem 2.3]{KL} the definition of $\phi$ is the negative of what we have in~\eqref{angleeq}, because in~\cite{KL} the definition of special Lagrangian with phase ${\rm e}^{\ri \theta}$ meant calibrated with respect to ${\rm e}^{-\ri \theta} {\rm d}z^1 \wedge \cdots \wedge {\rm d}z^n$, whereas we take it to mean calibrated with respect to ${\rm e}^{\ri \theta} {\rm d}z^1 \wedge \cdots \wedge {\rm d}z^n$, which is standard.
\end{Remark}

Note that condition~\eqref{eq:tw3} is really a sequence of conditions for each normal direction $\nu$, as~follows:
\begin{gather} \label{eq:seq}
\Re \big(\sigma_1 \big(A^\nu C^{-1}\big) \big) = 0, \qquad
\Im \big(\sigma_2 \big(A^\nu C^{-1}\big) \big) = 0, \qquad
\Re \big(\sigma_3 \big(A^\nu C^{-1}\big) \big) = 0, \qquad \dots.
\end{gather}
It is useful to rewrite equation~\eqref{eq:tw3} in the extreme cases $j=1$ and $j=k$ as follows. By the linearity of $\sigma_1 = \tr$, we~have
\begin{gather*}
2 \Re \big(\sigma_1 \big(A^\nu C^{-1}\big)\big) = \tr \big(A^{\nu} \big((I + \ri B)^{-1} + (I - \ri B)^{-1}\big)\big).
\end{gather*}
But because we can diagonalize the symmetric matrix $B$, it is easy to see that $(I + \ri B)^{-1} +$ $(I - \ri B)^{-1} = 2 \big(I + B^2\big)^{-1}$. Thus we find that
\begin{gather*}
2 \Re \big( \sigma_1 \big(A^\nu C^{-1}\big) \big) = 2 \tr \big( A^{\nu} \big(I + B^2\big)^{-1} \big).
\end{gather*}
Hence, the condition~\eqref{eq:tw3} in the $j=1$ case can be rewritten as
\begin{gather} \label{eq:tw3-special1}
\tr \big(A^{\nu} \big(I + B^2\big)^{-1}\big) = 0 \qquad \text{for all $\nu$}.
\end{gather}
Because $\sigma_k = \det$ is multiplicative, we~have $\sigma_k \big(A^{\nu} C^{-1}\big) = \det A^{\nu} \det C^{-1}$. Hence, the condition~\eqref{eq:tw3} in the $j=k$ case can be rewritten as
\begin{gather}\label{eq:tw3-specialk}
(\det A^{\nu}) \Im \left(\frac{\ri^k}{\det C}\right) = 0 \qquad \text{for all $\nu$}.
\end{gather}

The simplest case of the twisted austere condition is when $M^k \subset \R^n$ is totally geodesic.

\begin{Proposition} \label{prop:totally-geodesic}
Suppose that $M^k \subset \R^n$ is \emph{totally geodesic} and complete. Without loss of~gene\-ra\-lity we can take $M^k = \R^k \subset \R^n$. Then the Borisenko construction yields a pro\-duct $K \times \R^{n-k}$, where $K \subset T^* \R^k$ is a special Lagrangian submanifold which is the graph of~$\mu$.
\end{Proposition}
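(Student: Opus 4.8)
The plan is to exhibit the Borisenko construction as an explicit Riemannian product, one factor being a flat totally geodesic plane and the other the graph of~$\mu$, and then to read off the special Lagrangian condition factor by factor. I~would begin with the standard fact that a complete totally geodesic submanifold of~$\R^n$ is an affine subspace, so after an ambient isometry we may assume $M = \R^k \times \{0\} \subset \R^k \times \R^{n-k} = \R^n$; in particular the second fundamental form of~$M$ vanishes, so $A^\nu = 0$ for every normal~$\nu$. Identifying $T^* \R^n$ with~$\C^n$ in the usual way (base coordinates real, fibre coordinates imaginary) and splitting $\C^n = \C^k \times \C^{n-k}$ compatibly with $\R^n = \R^k \times \R^{n-k}$, the conormal space of~$M$ at $x = (x^1, \dots, x^k)$ is the purely imaginary plane $\ri \R^{n-k} \subset \C^{n-k}$, while $\mu = \mu_1\, {\rm d}x^1 + \cdots + \mu_k\, {\rm d}x^k$ contributes the purely imaginary vector $(\ri \mu_1(x), \dots, \ri \mu_k(x))$ in the first factor. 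Hence
\[
L = N^* M + \mu = K \times \ri \R^{n-k}, \qquad K = \big\{\, x + \ri \mu_x : x \in \R^k \,\big\} \subset \C^k = T^* \R^k,
\]
so $K$ is precisely the graph of the $1$-form~$\mu$, and since the coordinate splitting is orthogonal and compatible with both the symplectic form and the holomorphic volume form on~$\C^n$, this is a genuine Riemannian and K\"ahler product, with $\ri \R^{n-k}$ isometric to the flat~$\R^{n-k}$.

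Next I~would invoke two elementary facts: the linear Lagrangian plane $\ri \R^{n-k}$ is special Lagrangian in~$\C^{n-k}$ with phase ${\rm e}^{\ri (n-k)\pi/2}$; and a product $L_1 \times L_2$ of Lagrangian submanifolds is Lagrangian, and is special with phase ${\rm e}^{\ri \theta}$ if and only if the factors are special with phases multiplying to~${\rm e}^{\ri \theta}$. Combining these, $L$ is special Lagrangian with phase ${\rm e}^{\ri \theta}$ if and only if $K$ is special Lagrangian with phase ${\rm e}^{\ri (\theta - (n-k)\pi/2)} = {\rm e}^{\ri \phi}$, with~$\phi$ the cophase angle of~\eqref{angleeq}. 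Letting the phase vary, $(M, \mu)$ is a twisted-austere pair if and only if the graph~$K$ of~$\mu$ is special Lagrangian in~$\C^k$, which is the assertion. As a cross-check, specializing Theorem~\ref{theoremzero} to $A \equiv 0$ makes the family~\eqref{eq:tw3} vacuous and leaves exactly~\eqref{eq:tw1} and~\eqref{eq:tw2}; and writing $C = I + \ri B$ with $B = \nabla\mu$, these two equations are precisely the classical conditions that $K$ be Lagrangian $({\rm d}\mu = 0)$ and have constant phase ${\rm e}^{\ri \phi}$ $(\Im({\rm e}^{\ri \phi} \det C) = 0)$, since along~$K$ one has $\Omega|_K = \det C\; {\rm d}x^1 \wedge \cdots \wedge {\rm d}x^k$ while the induced volume form is $|\det C|\, {\rm d}x^1 \wedge \cdots \wedge {\rm d}x^k$.

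The only point requiring genuine care is the phase bookkeeping in the last step: one must check that the phase ${\rm e}^{\ri (n-k)\pi/2}$ coming from the flat fibre factor is subtracted with the sign that reproduces exactly the cophase angle $\phi = \theta - (n-k)\tfrac{\pi}{2}$ of~\eqref{angleeq} --- equivalently, that the chosen identification of $T^*\R^n$ with~$\C^n$ is the one for which~\eqref{eq:tw2}, rather than its complex conjugate, is the graphical special Lagrangian equation (this sign is precisely the one already fixed in the statement of Theorem~\ref{theoremzero}, and the source of the $\phi$ versus $-\phi$ discrepancy noted in the Remark above). Everything else is a direct unwinding of definitions.
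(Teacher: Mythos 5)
Your proof is correct and follows essentially the same route as the paper: use total geodesy to kill $A^\nu$ and reduce the twisted-austere conditions, identify $L = N^*\R^k + \mu$ with the product $K \times \R^{n-k}$, and observe that the special Lagrangian condition factors, leaving the graphical special Lagrangian equation for $K$ in $\C^k$. The paper's version is terser—it simply cites Harvey--Lawson's Theorem 2.3 for the graph and Karigiannis--Leung's Theorem 2.3 for the phase shift, and also records that $\mu$ is exact since $\R^k$ is simply connected—whereas you spell out the phase bookkeeping (the ${\rm e}^{\ri (n-k)\pi/2}$ contribution from the imaginary fibre factor) and the product-of-Lagrangians fact explicitly; this is a useful elaboration rather than a departure.
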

\begin{proof}
Since $\R^k$ is totally geodesic, we~have $A^{\nu} = 0$ for all $\nu$. Thus the sequence of conditions~\eqref{eq:seq} are trivially satisfied. We~have $N^* M = \R^k \times \R^{n-k} \subset \R^n \times \R^n = T^* \R^n$. The~closed $1$-form $\mu$ is necessarily exact, so $\mu = {\rm d} f$ for some $f \in C^{\infty} \big(\R^k\big)$. Equation~\eqref{eq:tw2} becomes $\Im \big( {\rm e}^{\ri \phi} \det (I + {\rm i} \operatorname{Hess} f) \big) = 0$ for $\phi = \theta - (n-k) \frac{\pi}{2}$. Then by~\cite[Theorem~2.3]{HL}, the graph of~$\mu$ in~$T^* \R^k$ is a~special Lagrangian submanifold $K$ of $T^* \R^n$ with phase ${\rm e}^{\ri \theta}$. (See~\cite[Theorem 2.3]{KL} for~discus\-sion about the phase.) Hence $L = N^* \R^k + \mu = K \times \R^{n-k}$ as claimed.
\end{proof}

A discussion of the cases $k=1,2$ of the twisted-austere condition was given in~\cite[Section~2]{KL}, which included a classification for $k=1$ and a partial result for $k=2$. Here we complete the classification for $k=2$. For completeness, we~give the details for both cases.

\begin{Proposition} \label{prop:k1}
Let $k=1$. If $\big(M^1, \mu\big)$ is a twisted-austere pair in $\R^n$ with $M$ complete, then $L = N^* M + \mu$ is an $n$-plane in $T^* \R^n = \C^n$.
\end{Proposition}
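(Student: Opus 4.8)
The plan is to exploit the extreme cases of the twisted-austere equations established above, which simplify dramatically when $k=1$. With $k=1$ the matrices $A^\nu$, $B$, $C=I+\ri B$ are all $1\times 1$; write $B=(b)$ for a smooth function $b$ on $M$ (necessarily $\mu$ is automatically closed since $\dim M =1$), and $A^\nu=(a^\nu)$ where $a^\nu$ is the single principal curvature of $M$ in the normal direction $\nu$. Then $C^{-1}=1/(1+\ri b)$, and equation~\eqref{eq:tw3} for $j=1$, in the form~\eqref{eq:tw3-special1}, reads $a^\nu/(1+b^2)=0$ for all $\nu$. Since $1+b^2\ne 0$, this forces $a^\nu=0$ for every normal direction $\nu$, i.e.\ the second fundamental form of $M$ in $\R^n$ vanishes identically. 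Hence $M$ is totally geodesic in $\R^n$.

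Next I would feed this back into Proposition~\ref{prop:totally-geodesic}: a complete totally geodesic $M^1\subset\R^n$ is a straight line, which we may take to be $\R\subset\R^n$, and the Borisenko construction yields $K\times\R^{n-1}$ where $K\subset T^*\R$ is the graph of the $1$-form $\mu={\rm d}f$. But $K$ is a special Lagrangian \emph{curve} in $\C=T^*\R$. It remains only to observe that any special Lagrangian curve (a one-real-dimensional calibrated submanifold of $\C$ with respect to some phase ${\rm e}^{\ri\theta}$) is a straight line: the calibration condition ${\rm e}^{\ri\theta}\,{\rm d}z|_K$ is real and positive, so $K$ has constant argument, hence is a line segment, and completeness makes it a full line through the appropriate point. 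Concretely, equation~\eqref{eq:tw2} with $k=1$ becomes $\Im\big({\rm e}^{\ri\phi}(1+\ri f'')\big)=0$, a first-order ODE forcing $f''$ to be the constant $\tan\phi$ (or $f''\equiv 0$ if $\phi$ is a multiple of $\pi/2$ handled separately), so $f$ is affine and its graph is a line. Therefore $L=K\times\R^{n-1}$ is an affine $n$-plane in $T^*\R^n=\C^n$.

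The only genuinely delicate point is bookkeeping around the phase: one should check that the argument of $K$ being constant really does follow from~\eqref{eq:tw2} in all cases of $\phi$, including the degenerate ones where $\cos\phi=0$; in that situation~\eqref{eq:tw2} instead forces the real part $1$ to vanish unless $f''$ is unconstrained, and a direct look at the calibration form shows $K$ is a vertical line $\{x=\text{const}\}\times\R$, still a line. I expect this case analysis on $\phi$ to be the main (very mild) obstacle; everything else is immediate from~\eqref{eq:tw3-special1} and Proposition~\ref{prop:totally-geodesic}. One can also bypass the ODE entirely by noting that a connected complete special Lagrangian submanifold of $\C^1$ is a priori a real line, since special Lagrangian submanifolds are minimal and the only complete connected minimal curves in $\R^2=\C$ are straight lines.
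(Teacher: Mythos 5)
Your proof is correct and follows essentially the same route as the paper's: deduce $A^\nu=0$ from the $j=1$ condition $\tr\big(A^\nu(I+B^2)^{-1}\big)=0$, so $M$ is totally geodesic and hence a line, then read the remaining determinant condition as an ODE forcing $f''$ constant. Two small remarks. First, a sign: $\Im\big({\rm e}^{\ri\phi}(1+\ri f'')\big)=\sin\phi+f''\cos\phi$, so the ODE is $f''=-\tan\phi$ rather than $\tan\phi$; this is immaterial for the conclusion. Second, your handling of $\cos\phi=0$ is not quite right: in that case the condition reduces to $\sin\phi=0$, which contradicts $\cos\phi=0$, so no twisted-austere pair with a totally geodesic base exists for such a phase and the proposition is vacuously true there; in particular $K$ cannot be a vertical line, since by construction it is the graph of $f'$ over all of $\R$. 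Your closing alternative argument---that $K$ is a complete special Lagrangian, hence minimal, curve in $\R^2=\C$ and therefore a straight line---is clean, correct, and sidesteps the $\phi$-bookkeeping entirely; it is a tidier finish than the explicit ODE.
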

\begin{proof}
In this case, $M^1$ is a curve. Equation~\eqref{eq:tw1} is vacuous. The~$1 \times 1$ matrix $C$ is $1 - \ri {\rm d}^* \mu$. Hence equation~\eqref{eq:tw2} becomes
\begin{gather} \label{eq:k1case}
\sin \phi = \cos \phi \, {\rm d}^* \mu.
\end{gather}
(There is a harmless sign error here in~\cite{KL}.) Using~\eqref{eq:tw3-special1} for~\eqref{eq:tw3} in the $j=1$ case (which is the only allowed value of $j$ here), and since $A^{\nu}$ is a scalar, we~get
\begin{gather*}
A^{\nu} = 0 \qquad \text{for all $\nu$}.
\end{gather*}
Thus $M^1$ is totally geodesic, hence a straight line. Without loss of generality, we~take it to be the $x$-axis in $\R^n$. Since $M = \R$, we~have $\mu = {\rm d}f$ for some $f \in C^{\infty} (\R)$. Then equation~\eqref{eq:k1case} says that $f''(x) = -\tan \phi$. Hence $\mu = (a x + b) {\rm d}x$ for some constants $a$, $b$, and $N^* M + \mu$ is an affine translation of $N^* M$ in $\C^n = \R^n \oplus \R^n$, and is thus an $n$-plane.
\end{proof}

\begin{Remark} %\label{rmk:k1}
Proposition~\ref{prop:k1} is consistent with Proposition~\ref{prop:totally-geodesic}, as a special Lagrangian graph in $T^* \R^1 = \C^2$ is straight line.
\end{Remark}

\begin{Proposition} \label{prop:k2}
Let $k=2$, and let $\big(M^2, \mu\big)$ be a twisted-austere pair in $\R^n$, such that $M$ is not totally geodesic. Then $\sin \phi = 0$, and $M$ is a minimal surface in $\R^n$ with $\mu$ a closed and coclosed $1$-form on $M$ with respect to the induced metric $($and hence harmonic$)$.
\end{Proposition}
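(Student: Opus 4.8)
The plan is to feed the three twisted-austere conditions of Theorem~\ref{theoremzero} through the case $k=2$ and read off enough linear algebra to pin down $\phi$. Write $b_1,b_2$ for the eigenvalues of the symmetric matrix $B=\nabla\mu$, so that $\det C=\det(I+\ri B)=(1-\det B)+\ri\,\tr B$ (in particular $\det C\neq 0$, since $\tr B=0$ forces $\det B\le 0$). With this, equation~\eqref{eq:tw2} becomes
\[
\cos\phi\,\tr B+\sin\phi\,(1-\det B)=0,
\]
while the extreme case $j=k=2$, namely~\eqref{eq:tw3-specialk}, becomes $(\det A^{\nu})\,\tr B=0$ for every normal $\nu$ (the factor $\Im(-1/\det C)$ is a nonzero scalar multiple of $\tr B$). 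The one elementary remark that drives everything is: where $\tr B=0$ we have $\det B=b_1b_2=-b_1^{2}\le 0$, hence $1-\det B\ge 1$.

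Next I would establish $\sin\phi=0$ by contradiction. Suppose $\sin\phi\neq 0$. If $\tr B$ vanished at some point of $M$, the displayed form of~\eqref{eq:tw2} would give $\det B=1$ there, contradicting $1-\det B\ge 1$. So $\tr B$ is nowhere zero, and then~\eqref{eq:tw3-specialk} forces $\det A^{\nu}=0$ for every $\nu$ at every point. Now I feed this into the $j=1$ case~\eqref{eq:tw3-special1}: a symmetric $2\times 2$ matrix of zero determinant has rank at most one, so $A^{\nu}=\lambda_{\nu}\,u_{\nu}u_{\nu}^{\mathsf T}$ with $\lambda_{\nu}=\tr A^{\nu}$ and $u_\nu$ a vector; since $I+B^{2}$ is positive definite,
\[
0=\tr\!\big(A^{\nu}(I+B^{2})^{-1}\big)=\lambda_{\nu}\,\big\langle (I+B^{2})^{-1}u_{\nu},\,u_{\nu}\big\rangle
\]
forces $\lambda_{\nu}=0$, hence $A^{\nu}=0$. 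As this holds for all $\nu$, the second fundamental form $\II$ of $M$ vanishes identically, i.e.\ $M$ is totally geodesic, contradicting the hypothesis. Therefore $\sin\phi=0$.

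With $\sin\phi=0$ (so $\cos\phi=\pm 1$), equation~\eqref{eq:tw2} collapses to $\tr B=0$, i.e.\ $\mathrm{d}^{*}\mu=-\tr\nabla\mu=0$; combined with~\eqref{eq:tw1} this says $\mu$ is closed and coclosed, hence harmonic for the induced metric. Finally, $\tr B=0$ together with the Cayley--Hamilton identity $B^{2}=(\tr B)B-(\det B)I$ gives $B^{2}=-(\det B)I=b_1^{2}I$, so $(I+B^{2})^{-1}=(1+b_1^{2})^{-1}I$ is a positive scalar multiple of the identity; then~\eqref{eq:tw3-special1} reduces to $\tr A^{\nu}=0$ for every $\nu$, which is exactly the statement that $M$ is a minimal surface in $\R^{n}$. (One can also check that, conversely, any minimal $M$ with harmonic $\mu$ and $\sin\phi=0$ satisfies all three conditions, but that is not needed here.)

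I do not expect a genuine obstacle: the argument is entirely local and algebraic, using nothing beyond Theorem~\ref{theoremzero}. The only points needing a little care are the rank-one normal form for $A^{\nu}$ once $\det A^{\nu}=0$, the sign of $\det B$ when $\tr B=0$, and keeping track that $\det C\ne 0$ so that the scalar factors multiplying $\tr B$ in~\eqref{eq:tw2} and~\eqref{eq:tw3-specialk} are genuinely nonzero where claimed — all routine $2\times 2$ linear algebra.
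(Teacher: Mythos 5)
Your proof is correct and is essentially the paper's argument, with only a cosmetic reordering: you establish $\sin\phi=0$ first (via the contradiction $\det A^{\nu}=0\Rightarrow A^{\nu}=0$, using that $\det B\le 0$ whenever $\tr B=0$) and then deduce $\tr B=0$, whereas the paper derives $\tr B=0$ first, then minimality, and obtains $\sin\phi=0$ last. The key step — that a rank-one $A^{\nu}$ with $\tr\big(A^{\nu}(I+B^{2})^{-1}\big)=0$ must vanish — is phrased more abstractly via positive definiteness of $(I+B^{2})^{-1}$, but it is the same observation the paper makes by explicit frame computation.
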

\begin{proof}
In this case, $M^2$ is a surface and now $\sigma_2 = \det$. From $\det C = \det (I + \ri B) = 1 + \ri \tr B - \det B$, we~find that~\eqref{eq:tw2} becomes
\begin{gather} \label{eq:k2case-1}
\sin \phi (1 - \det B) + \cos \phi (\tr B) = 0.
\end{gather}
(There is again a harmless sign error in~\cite[equation (2.15)]{KL}.) Using~\eqref{eq:tw3-special1} for~\eqref{eq:tw3} in the $j=1$ case gives
\begin{gather} \label{eq:k2case-2}
\tr \big(A^{\nu} \big(I + B^2\big)^{-1}\big) = 0 \qquad \text{for all $\nu$}.
\end{gather}
We also have
\begin{gather*}
\frac{1}{\det C} = \frac{1}{1 + \ri \tr B - \det B} = \frac{(1 - \det B) - \ri \tr B}{(1- \det B)^2 + (\tr B)^2}.
\end{gather*}
From the above, using~\eqref{eq:tw3-specialk} for~\eqref{eq:tw3} in the $j=k=2$ case gives
\begin{gather} \label{eq:k2case-3}
(\det A^{\nu}) (\tr B) = 0.
\end{gather}
Suppose that $\tr B \ne 0$, so that $\det A^{\nu} = 0$ for all $\nu$. Fixing a particular normal direction $\nu$, we~can choose an orthonormal frame at a point on $M$ such that
\begin{gather*}
A^{\nu} = \begin{pmatrix} 0 & 0 \\ 0 & a_{22} \end{pmatrix}, \qquad
B = \begin{pmatrix} b_{11} & b_{12} \\ b_{12} & b_{22} \end{pmatrix}.
\end{gather*}
Then we have
\begin{gather*}
\big(I + B^2\big)^{-1} = \frac{1}{\det \big(I + B^2\big)}
\begin{pmatrix} 1 + b_{22}^2 + b_{12}^2 & -b_{12}(b_{11} + b_{22}) \\ -b_{12}(b_{11} + b_{22}) & 1 + b_{11}^2 + b_{12}^2 \end{pmatrix},
\end{gather*}
and thus equation~\eqref{eq:k2case-2} for this $\nu$ gives $a_{22} \big(1 + b_{11}^2 + b_{12}^2\big) = 0$, hence $a_{22} = 0$ and $A^{\nu} = 0$ for this $\nu$.
Therefore whenever $\det A^{\nu} = 0$, we~have $A^{\nu} = 0$. Since this holds for all $\nu$, we~are in the totally geodesic case which is covered by Proposition~\ref{prop:totally-geodesic}.

Therefore we can assume there exists at least one $\nu$ such that $\det A^{\nu} \neq 0$. From~\eqref{eq:k2case-3} we~deduce that $\tr B = 0$, so $\mu$ is coclosed with respect to the induced metric. Since $\mu$ is also closed by~\eqref{eq:tw1}, we conclude that $\mu$ is harmonic. Now choose at~a~point an orthonormal frame in which $B$ is diagonal. Since $\tr B = 0$, in such a frame we~have
\begin{gather*}
B = \begin{pmatrix} \lambda & 0 \\ 0 & - \lambda \end{pmatrix}.
\end{gather*}
But then $I + B^2$ is a positive scalar multiple of the identity, so~\eqref{eq:k2case-2} implies that $\tr A^{\nu} = 0$ for~all~$\nu$, so $M^2 \subset \R^n$ is a minimal surface. Finally, equation~\eqref{eq:k2case-1} becomes $(\sin \phi) \big(1 + \lambda^2\big) = 0$, so~$\sin \phi = 0$.
\end{proof}

\begin{Example} %\label{ex:F=H}
We illustrate Proposition~\ref{prop:k2} with an explicit example when $n=3$. Throughout this example we identify vector fields and $1$-forms on $\R^3$ using the Euclidean metric. Let $M^2$ be a surface in $\R^3$ which is given by the graph of a smooth function $h\colon \Omega \to \R$ of two variables, where $\Omega$ is some open set in $\R^2$. It~is well known that the minimal surface equation in this case~is
\begin{gather} \label{eq:minimal-ex}
\big(1 + h_v^2\big) h_{uu} + \big(1 + h_u^2\big) h_{vv} - 2 h_u h_v h_{uv} = 0.
\end{gather}
With respect to the global frame of tangent vector fields given by $\vv_1 = ( 1, 0, h_u)$ and $\vv_2 = (0, 1, h_v)$, the induced metric on $M^2$ from the Euclidean metric on $\R^3$ is
\begin{gather*}
g = \begin{pmatrix} 1 + h_u^2 & h_u h_v \\ h_u h_v & 1 + h_v^2 \end{pmatrix}
\end{gather*}
and one can compute that for a function $f\colon \Omega \to \R$, thought of as function on the Riemannian manifold $(M, g)$, and writing the coordinates on $\Omega \subseteq \R^2$ as $(u_1, u_2) = (u,v)$, its exterior derivative is
\begin{gather}
{\rm d} f = f_u \big(g^{11} \vv_1 + g^{12} \vv_2\big) + f_v \big(g^{21} \vv_1 + g^{22} \vv_2\big) \nonumber
\\ \hphantom{{\rm d} f}
\label{eq:exterior-ex}
 {}= \frac{1}{\det g} \big( \big(1 + h_v^2\big) f_u - h_u h_v f_v, - h_u h_v f_u + \big(1 + h_u^2\big) f_v, h_u f_u + h_v f_v \big),
\end{gather}
and its Laplacian is
\begin{gather}
\nonumber
\Delta_g f = \frac{1}{\sqrt{\det g}} \frac{\partial}{\partial u_i}
\bigg(g^{ij} \sqrt{\det g} \frac{\partial f}{\partial u_j} \bigg)
\\ \hphantom{\Delta_g f}
\nonumber
{}= \frac{1}{\big(1 + h_u^2 + h_v^2\big)} \big( \big(1+ h_v^2\big) f_{uu} + \big(1 + h_u^2\big) f_{vv} - 2 h_u h_v f_{uv} \big)
\\ \hphantom{\Delta_g f=}
\label{eq:Laplace-ex}
{} - \frac{1}{(\det g)^2} (h_u f_u + h_v f_v ) \big( \big(1 + h_v^2\big) h_{uu} + \big(1 + h_u^2\big) h_{vv} - 2 h_u h_v h_{uv} \big).
\end{gather}
Substituting~\eqref{eq:minimal-ex} into~\eqref{eq:Laplace-ex} eliminates the second term. We~deduce that $f$ is a harmonic function on the minimal surface $M$ if and only if
\begin{gather} \label{eq:reduced-ex}
\big(1 + h_v^2\big) f_{uu} + \big(1 + h_u^2\big) f_{vv} - 2 h_u h_v f_{uv} = 0.
\end{gather}
Using the Euclidean metric to identify covectors with tangent vectors, the conormal space is spanned by $\nu^* = (-h_u, -h_v, 1)$, and we obtain from~\eqref{eq:twisted-austere} and~\eqref{eq:exterior-ex} that the twisted conormal bundle $N^* M + d f$ is identified with the submanifold
\begin{gather*}
\{ (x_1(t,u,v), x_2(t,u,v), x_3(t,u,v), y_1(t,u,v), y_2(t,u,v), y_3(t,u,v)) : (u,v) \in \Omega, t \in \R \}
\end{gather*}
in $\R^6 \cong \C^3$, with coordinate functions given by
\begin{gather*}
x_1 = u, \qquad
x_2 = v, \qquad
x_3 = h(u,v), \\
y_1 = -t h_u + \frac{1}{1 + h_u^2 + h_v^2} \big( \big(1 + h_v^2\big) f_u - h_u h_v f_v\big), \\
y_2 = -t h_v + \frac{1}{1 + h_u^2 + h_v^2} \big({-}h_u h_v f_u + \big(1 + h_u^2\big) f_v \big), \\
y_3 = t + \frac{1}{1 + h_u^2 + h_v^2} (h_u f_u + h_v f_v).
\end{gather*}
Proposition~\ref{prop:k2} says that if the two functions $h$ and $f$ satisfy the pair of equations~\eqref{eq:minimal-ex} and~\eqref{eq:reduced-ex}, then the immersion of the open set $\Omega \times \R$ in $\R^2 \times \R$ is a special Lagrangian submanifold of $\C^3$ with phase ${\rm e}^{{\rm i} \frac{\pi}{2}}$.

Note that in particular, if we choose $f=h$ then the pair of equations~\eqref{eq:minimal-ex} and~\eqref{eq:reduced-ex} coincide. For example, we~can take $h(u,v) = \arctan{\frac{v}{u}}$, so that $M$ is a~helicoid in $\R^3$, which is a~minimal surface. Then taking $f = h$, one can compute that $(y_1, y_2, y_3)$ is
\begin{gather*}
\left(\frac{v \big(t\big(1+u^2+v^2\big)-\big(u^2+v^2\big) \big)}{\big(u^2+v^2\big)\big(1+u^2+v^2\big)},
-\frac{u \big(t(1+u^2+v^2)-\big(u^2+v^2\big) \big)}{\big(u^2+v^2\big)\big(1+u^2+v^2\big)},
\frac{1 + t\big(1+u^2+v^2\big)}{\big(1+u^2+v^2\big)} \right).
\end{gather*}
The authors verified directly that the above is a special Lagrangian submanifold of $\C^3$ with phase ${\rm e}^{{\rm i} \frac{\pi}{2}}$. Of course, even over the helicoid, there are infinitely many more solutions. Given $h(u,v) = \arctan{\frac{v}{u}}$, a computation on Maple shows that the general solution to~\eqref{eq:reduced-ex} is
\begin{gather*}
f = A_1 \left( \arctan \frac{v}{u} + \frac{1}{2} \arcsin\big(1 + 2 u^2 + 2 v^2\big) \right)\\
\hphantom{f =}{}
 + A_2 \left( \arctan \frac{v}{u} - \frac{1}{2} \arcsin\big(1 + 2 u^2 + 2 v^2\big) \right),
\end{gather*}
where $A_1$, $A_2$ are arbitrary $C^2$ functions of one variable. The~solution $f=h=\arctan \frac{v}{u}$ corresponds to $A_1 (s) = A_2 (s) = \frac{1}{2} s$.
\end{Example}

\section{Twisted-austere 3-folds} \label{sec:tw3folds}

Because the special Lagrangian $n$-folds for $M$ totally geodesic arise by taking products of lower-dimensional examples with a flat factor, we~generally exclude the case where $M$ is totally geodesic from now on.

In this section we state and prove the first of our two main theorems, which characterizes a~twisted-austere pair $\big(M^3, \mu\big)$ when $M$ is a $3$-dimensional submanifold of $\R^n$ that is not totally geodesic. There are only two possibilities.

\begin{Theorem} \label{theoremone}
Let $(M,\mu)$ be a twisted-austere pair where $M^3 \subset \R^n$ is \emph{not totally geodesic}, and let $\phi$ be as in~\eqref{angleeq} with $k=3$.
Then $\cos \phi \ne 0$, and either
\begin{enumerate}[$(i)$]\itemsep=0pt
\item $n$ is arbitrary and $M$ is {ruled by lines}, or else
\item $n=5$ and $M$ is a {generalized helicoid swept out by planes in $\R^5$}.
\end{enumerate}
\end{Theorem}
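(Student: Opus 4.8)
The plan is to start from the three twisted-austere equations~\eqref{eq:tw1}--\eqref{eq:tw3} with $k=3$ and extract strong pointwise algebraic consequences on the pair $(A^\nu, B)$, using the fact that $M$ is not totally geodesic to rule out the degenerate cases. First I would establish that $\cos\phi\neq 0$: if $\cos\phi=0$ then $\sin\phi=\pm 1$, and~\eqref{eq:tw2} forces $\Re(\det C)=0$, i.e.\ $1-\sigma_2(B)=0$, while the $j=k=3$ case~\eqref{eq:tw3-specialk} together with $\det C = \ri\sigma_1(B)-\ri\sigma_3(B)$ (purely imaginary) gives no constraint, so I would instead push on the $j=1$ and $j=2$ equations in~\eqref{eq:seq}. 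The cleaner route is: diagonalize $B$ at a point, write its eigenvalues as $b_1,b_2,b_3$, and expand $\sigma_j(A^\nu C^{-1})$ for $j=1,2,3$ explicitly in terms of the entries of $A^\nu$ in the $B$-eigenframe and the complex numbers $1+\ri b_m$. Taking real/imaginary parts as dictated by~\eqref{eq:seq} yields, for every normal $\nu$, a linear system in the entries $A^\nu_{mm}$ and a separate set of quadratic relations coupling the off-diagonal entries $A^\nu_{m\ell}$ to the $b$'s. The key structural dichotomy will emerge from whether the quadratic form $I+B^2$ (equivalently the collection $1+b_m^2$) is proportional to the identity or not — this is exactly the ``split'' versus ``non-split'' distinction alluded to in the introduction.

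The heart of the argument is the analysis of the rank and eigenstructure of $A^\nu$. From~\eqref{eq:tw3-special1} we get $\tr(A^\nu(I+B^2)^{-1})=0$ for all $\nu$, which says the symmetric bilinear form $A^\nu$ is trace-free against the fixed positive-definite form $(I+B^2)^{-1}$; in particular no $A^\nu$ can be definite. I would then combine this with the $j=2$ condition $\Im(\sigma_2(A^\nu C^{-1}))=0$ and, crucially, the $j=3$ condition in the form~\eqref{eq:tw3-specialk}: since $\Im(\ri^3/\det C)=-\Re(1/\det C)=-(1-\sigma_2(B))/|\det C|^2$, either $1-\sigma_2(B)\equiv 0$ on $M$, or $\det A^\nu=0$ for all $\nu$. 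In the first subcase I would show that the remaining equations, together with $\mathrm{d}\mu=0$ and the Codazzi equations for $A$, force the eigenvalues of $B$ to be constrained so rigidly (e.g.\ $b_1b_2+b_2b_3+b_3b_1=1$ together with the $j=1,2$ trace conditions) that $M$ must be the generalized helicoid in $\R^5$ — this is where $n=5$ drops out, because the number of independent normal directions gets pinned down by requiring the linear system in the $A^\nu_{mm}$ to have solutions for enough $\nu$'s without forcing $A^\nu\equiv 0$. In the second subcase, $\det A^\nu=0$ for every $\nu$ means every shape operator has a kernel; I would argue (using the trace-free-against-$(I+B^2)^{-1}$ condition, which prevents rank one from accumulating into a definite form, and the $j=2$ equation to control the $2\times 2$ minors) that the $A^\nu$ have a \emph{common} kernel direction, a line field on $M$. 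A common-kernel line field for the whole second fundamental form is precisely the statement that $M$ is ruled by lines in $\R^n$ (the relative nullity foliation is by line segments, and completeness/the structure equations make them genuine lines), giving case~(i).

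The main obstacle I anticipate is the bookkeeping in the ``$\det A^\nu=0$ for all $\nu$'' subcase: showing the kernels of the various $A^\nu$ coincide rather than merely being nonzero. The naive worry is that at a point one could have $A^{\nu_1}$ with kernel $\ve_1$ and $A^{\nu_2}$ with kernel $\ve_2$; I would defeat this by feeding such a configuration into the $j=1$ and $j=2$ equations simultaneously — the $(I+B^2)^{-1}$-trace-free condition applied to $A^{\nu_1}+A^{\nu_2}$ and to individual $A^\nu$, combined with $\sigma_2(A^\nu C^{-1})$ being real, over-determines the system and forces the off-diagonal structure to collapse, so the common kernel is at least two-dimensional generically, hence $M$ is ruled. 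A secondary technical point is promoting pointwise ruling to genuine straight lines and checking the dichotomy is globally consistent (the subcase cannot switch from point to point), which follows from continuity of the rank of $A$ together with the fact that the ``$1-\sigma_2(B)\equiv 0$'' locus is closed and, where it fails, we are forced into case~(i); a connectedness argument then finishes it. The detailed calculations establishing the helicoid in subcase one — identifying it with Bryant's example — I would defer to the structure-equation analysis of the next section, citing~\cite{Br}.
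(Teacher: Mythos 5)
Your proposal has the right pieces of machinery (the identities for $\sigma_j\big(A^\nu C^{-1}\big)$, diagonalizing $B$, the trace-free-against-$(I+B^2)^{-1}$ condition) but the central dichotomy you set up is misplaced, and this causes the argument to break down in both branches.

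You frame the split as ``either $1-\sigma_2(B)\equiv 0$ or $\det A^\nu=0$ for all $\nu$'', sending the first branch to the helicoid and the second to ruled-by-lines. But the paper shows something stronger and cleaner in Proposition~\ref{thm1step1}: $\det A^\nu=0$ holds \emph{unconditionally}, for every normal direction, at every point. The reason is that if $\det A^\nu\neq 0$, then~\eqref{condb3} forces $\Re\det C = 1-\sigma_2(B)=0$; then $\det C$ is purely imaginary, so~\eqref{condb1} and~\eqref{condb2} force $\{A^\nu,B\}=0$ and $\sigma_2(A^\nu)=0$; and together with $\sigma_2(B)=1$ these say that $t\mapsto B+tA^\nu$ is an affine line lying on the quadric $\sigma_2(W)=1$ in $\calS_3$. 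Since $\sigma_2$ has Lorentzian signature $(1,5)$ on $\calS_3$ (Remark~\ref{rmk:sigma2-metric}), that quadric contains no lines, forcing $A^\nu=0$, a contradiction. This Lorentzian-signature observation is the decisive step you are missing, and it collapses your ``first subcase'': assuming $1-\sigma_2(B)\equiv 0$ does not free you to hunt for the helicoid, because whenever additionally some $\det A^\nu\neq 0$, the same argument kills $A^\nu$. The same argument also handles $\cos\phi\neq 0$, since $\cos\phi=0$ also forces $1-\sigma_2(B)=0$ via~\eqref{expandet}.

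The actual source of the ruled-vs.-helicoid dichotomy is purely linear-algebraic and sits on top of the fact that $\det A^\nu=0$ for all $\nu$: one needs a classification of maximal linear subspaces of the variety $\D_3\subset\calS_3$ of singular $3\times 3$ symmetric matrices (Proposition~\ref{sing3max} in Appendix~B), which shows that $|\II_p|$ must lie, in a suitable frame, in $\W_1$ or $\W_2$. Your common-kernel argument is essentially what handles $\W_1$ (there the shared null direction $\ve_3$ gives the ruling), and you correctly anticipate the need to upgrade pointwise nullity to genuine straight lines via the structure equations. But you do not notice that $\W_2$ is a second maximal subspace of $\D_3$ with \emph{no} common kernel, so your ``over-determination forces a common kernel'' heuristic cannot be made to work in general. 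That is exactly where the helicoid enters: in case $\W_2$ one feeds the basis matrices into~\eqref{condc2} and~\eqref{condc1} to force $B_{11}=B_{22}=-\tan\phi$, $B_{12}=0$, and $\tr A^\nu=0$, whereupon $M$ is austere with $|\II|$ simple in Bryant's sense, hence a generalized helicoid by~\cite{Br}; and $n=5$ comes not from counting normal directions against a linear system in the $A^\nu_{mm}$ but from the codimension-reduction result (Theorem~\ref{codim-reduction-thm}) applied to the fact that the first prolongation of $\V'_2$ vanishes. You also need the no-rank-one lemma (Lemma~\ref{no-rank-one}) to guarantee $\dim|\II|\ge 2$ in case $(ii)$ and to pin down the final statement in Proposition~\ref{thm1step2}; without it the case analysis does not close.
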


The proof of Theorem~\ref{theoremone} takes up this entire section, and we break it up into a sequence of~pro\-positions, all of which share the assumptions of Theorem~\ref{theoremone}.

\begin{Proposition} \label{thm1step1}
We have $\det (A^\nu) = 0$ for all normal directions $\nu$, and moreover $\cos \phi \ne 0$.
\end{Proposition}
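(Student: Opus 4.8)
The plan is to combine the top and bottom conditions from the sequence \eqref{eq:seq} to squeeze out the vanishing of $\det A^\nu$, and then use that information in \eqref{eq:k2case-3}'s analogue to pin down $\cos\phi$. Since $k=3$, the relevant conditions are $\Re(\sigma_1(A^\nu C^{-1})) = 0$ (which, by \eqref{eq:tw3-special1}, is $\tr(A^\nu(I+B^2)^{-1}) = 0$) and $\Re(\sigma_3(A^\nu C^{-1})) = 0$. The latter, via \eqref{eq:tw3-specialk} with $k=3$, reads $(\det A^\nu)\,\Re\!\big(\ri/\det C\big)\cdot(\text{sign}) = 0$ up to rearranging: more precisely $(\det A^\nu)\,\Im(\ri^3/\det C) = 0$, i.e. $(\det A^\nu)\,\Re(1/\det C) = 0$. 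Writing $\det C = 1 - \sigma_2(B) + \ri(\sigma_1(B) - \sigma_3(B))$, the real part of $1/\det C$ is $\big(1 - \sigma_2(B)\big)/|\det C|^2$. So at any point where $\det A^\nu \ne 0$ for some $\nu$, we must have $\sigma_2(B) = 1$.

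First I would argue by contradiction, assuming $\det A^{\nu_0} \ne 0$ at some point $p$ for some normal direction $\nu_0$. As just noted, this forces $\sigma_2(B) = 1$ at $p$. Next I would diagonalize the symmetric matrix $B$ at $p$, say $B = \mathrm{diag}(b_1, b_2, b_3)$, so $(I+B^2)^{-1} = \mathrm{diag}\big((1+b_1^2)^{-1}, (1+b_2^2)^{-1}, (1+b_3^2)^{-1}\big)$. In this frame, \eqref{eq:tw3-special1} becomes $\sum_i A^\nu_{ii}/(1+b_i^2) = 0$ for every $\nu$. I would then also bring in the $j=2$ condition from \eqref{eq:seq}, namely $\Im(\sigma_2(A^\nu C^{-1})) = 0$; expanding $\sigma_2$ of a product is more delicate, but $\sigma_2(A^\nu C^{-1})$ can be written using $\sigma_2(A^\nu C^{-1}) = \tfrac12\big((\tr A^\nu C^{-1})^2 - \tr((A^\nu C^{-1})^2)\big)$, and after substituting $C^{-1} = (I - \ri B)(I+B^2)^{-1}$ its imaginary part is a concrete quadratic expression in the $A^\nu_{ij}$ and $b_i$. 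The strategy is to show that the three equations ($j=1,2,3$) for a single well-chosen $\nu$ — combined with the constraint $\sigma_2(B)=1$ — force $\det A^\nu = 0$, contradicting the assumption. A cleaner route, mirroring the $k=2$ proof, may be: since $\det A^{\nu_0}\ne 0$, $A^{\nu_0}$ is invertible, and one can try to derive that $A^{\nu_0}$ must then be a multiple of $(I+B^2)$ or satisfy some trace identity incompatible with $\Re\sigma_1 = \Re\sigma_3 = 0$ unless it degenerates.

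Once $\det A^\nu = 0$ for all $\nu$ is established on a dense open set (hence everywhere by continuity, unless $M$ is totally geodesic — which is excluded), the second claim follows quickly. From \eqref{eq:tw3-specialk} with $k=3$ we have $(\det A^\nu)\,\Im(\ri^3/\det C) = 0$, which is now automatic and gives no information; instead, I would revisit the $j=1$ equation. With $\det A^\nu = 0$ for all $\nu$ but $M$ not totally geodesic, at a generic point there is $\nu$ with $A^\nu$ of rank one or two. Picking $\nu$ with $\mathrm{rank}\,A^\nu \geq 1$ and using \eqref{eq:tw2} written out as the analogue of \eqref{eq:k2case-1} for $k=3$ — namely $\sin\phi\,(1 - \sigma_2(B)) + \cos\phi\,(\sigma_1(B) - \sigma_3(B)) = 0$ — together with the constraint that $\Re\sigma_1(A^\nu C^{-1})=0$ be non-vacuous, I would derive that if $\cos\phi = 0$ then $\sin\phi \ne 0$, forcing $\sigma_2(B) = 1$ identically; but then $I+B^2$ interacts with the $j=1,2,3$ system in a way that forces $A^\nu = 0$ for all $\nu$ (again contradicting non-totally-geodesic), essentially because $\sigma_2(B)=1$ plus the sequence \eqref{eq:seq} over-determines $A^\nu$.

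The main obstacle I anticipate is the $j=2$ (middle) condition: unlike $\sigma_1$ and $\sigma_3$, which are respectively additive and multiplicative and hence simplified cleanly in \eqref{eq:tw3-special1} and \eqref{eq:tw3-specialk}, the second elementary symmetric function of the product $A^\nu C^{-1}$ does not factor, so extracting its imaginary part requires a genuine computation with the off-diagonal entries of $A^\nu$ in the frame diagonalizing $B$. Managing this — presumably by choosing the normal direction $\nu$ cleverly (e.g. so that $A^\nu$ is also as diagonal as possible, or has rank one) to kill the cross terms — will be where the real work lies. I expect the argument ultimately reduces, as in the $k=2$ case, to a short linear-algebra lemma: a symmetric $3\times 3$ matrix $A$ with $\tr(A(I+B^2)^{-1}) = 0$, $\Im\sigma_2(AC^{-1}) = 0$, and $\det A \ne 0$ cannot coexist with $\sigma_2(B) = 1$ — and once that lemma is in hand both halves of the proposition drop out.
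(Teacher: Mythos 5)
Your opening move is correct and matches the paper: the $j=3$ condition \eqref{eq:tw3-specialk} with $k=3$ reads $(\det A^\nu)\,\Re(1/\det C)=0$, and $\Re(1/\det C)=(1-\sigma_2(B))/|\det C|^2$, so $\det A^{\nu_0}\ne0$ forces $\sigma_2(B)=1$, i.e.\ $\det C$ purely imaginary. After that, however, your proposal stalls: you correctly isolate the needed linear-algebra lemma (``a symmetric $3\times 3$ matrix $A$ with $\tr(A(I+B^2)^{-1})=0$, $\Im\sigma_2(AC^{-1})=0$, $\det A\ne0$ cannot coexist with $\sigma_2(B)=1$'') but you do not prove it, and the two routes you float — diagonalizing $B$ and grinding through off-diagonal cross-terms, or mirroring the $k=2$ argument by hoping $A$ is a multiple of $I+B^2$ — are exactly the computations the paper avoids. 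The $k=2$ argument hinged on $I+B^2$ being scalar after $\tr B=0$, which has no analogue here; and the brute-force route in the $j=2$ condition is not evidently tractable.

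The missing idea in the paper is an algebraic/geometric one. Instead of using the form $\tr(A^\nu(I+B^2)^{-1})=0$ of the $j=1$ condition, one uses the identities~\eqref{expandS1} and~\eqref{expandS2}, which express $\sigma_1(A^\nu C^{-1})$ and $\sigma_2(A^\nu C^{-1})$ with numerators written in terms of $\sigma_1(A^\nu(I-\adj B))$, the $\sigma_2$-polarization $\{A^\nu,B\}$, $\sigma_2(A^\nu)$, and $\sigma_1(B\adj A^\nu)$, all over the common denominator $\det C$. Once $\det C$ is purely imaginary, taking $\Re$ of~\eqref{condb1} forces $\{A^\nu,B\}=0$ and taking $\Im$ of~\eqref{condb2} forces $\sigma_2(A^\nu)=0$. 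Together with $\sigma_2(B)=1$ these give
\[
\sigma_2(B+tA^\nu)\;=\;\sigma_2(B)+2t\{B,A^\nu\}+t^2\sigma_2(A^\nu)\;=\;1\qquad\text{for all }t,
\]
so $t\mapsto B+tA^\nu$ is a line lying entirely on the quadric $\{\sigma_2(W)=1\}\subset\calS_3$. But $\sigma_2$ on $\calS_3$ has Lorentzian signature $(1,5)$ (Remark~\ref{rmk:sigma2-metric}), and the level set $\{\sigma_2=1\}$ of a signature-$(1,5)$ form is a two-sheeted hyperboloid, which contains no lines; hence $A^\nu=0$, a contradiction. Your proposal never produces the pair of clean scalar identities $\{A^\nu,B\}=0$, $\sigma_2(A^\nu)=0$, nor the quadric observation, so the core of the argument is absent. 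Finally, the $\cos\phi\ne0$ claim is immediate once this machinery is in place: $\cos\phi=0$ and~\eqref{expandet} again force $1-\sigma_2(B)=0$, and the same line-on-quadric argument yields $A^\nu=0$ for all $\nu$, contradicting that $M$ is not totally geodesic — you do not need the more elaborate rank considerations you sketch.
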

\begin{proof}
Recall from Theorem~\ref{theoremzero} that, in addition to ${\rm d} \mu = 0$ which just says that $B$ is symmetric, the twisted-austere conditions for $3$-dimensional $M$ are
\begin{gather}
\Im \big({\rm e}^{\ri \phi} \det C\big) = 0, \label{condb0} \\
\Re \big(\sigma_1 \big(A^\nu C^{-1}\big) \big) = 0, \label{condb1} \\
\Im \big(\sigma_2 \big(A^\nu C^{-1}\big) \big) = 0, \label{condb2} \\
\Re \big(\sigma_3 \big(A^\nu C^{-1}\big) \big) = 0, \label{condb3}
\end{gather}
where $C = I + \ri B$. Here $\sigma_3$ is the determinant. Note that $\det C \neq 0$. (See the proof of~Pro\-position~\ref{prop:sigma-adj-1}.)

Using $\Re \det C = 1-\sigma_2(B)$ and $\Im \det C = \sigma_1(B) - \sigma_3(B)$, the first condition~\eqref{condb0} ex\-pands~as
\begin{gather} \label{expandet}
(1-\sigma_2(B)) \sin \phi + (\sigma_1(B) - \sigma_3(B))\cos\phi = 0.
\end{gather}
Next, note that we can expand conditions~\eqref{condb1} and~\eqref{condb2} using the identities
\begin{gather}
\sigma_1\big(A^\nu C^{-1}\big) = \dfrac{\sigma_1(A^\nu (I -\adj B)) + 2\ri \{A^\nu,B\}}{\det C}, \label{expandS1} \\
\sigma_2\big(A^\nu C^{-1}\big) = \dfrac{\sigma_2(A^\nu) + \ri \sigma_1( B \adj A^\nu) }{\det C}, \label{expandS2}
\end{gather}
where $\{\,,\,\}$ denotes the symmetric bilinear form corresponding to $\sigma_2$ on the space $\calS_3$ of $3 \times 3$ symmetric matrices (that is, $\{W,W\} = \sigma_2(W)$ for all $W \in \calS_3$). A general version (for $k \times k$ matrices) of the identity~\eqref{expandS2} is proved in~Proposition~\ref{prop:sigma-adj-1} and the identity~\eqref{expandS1} is proved in~Proposition~\ref{prop:sigma-adj-2}.

Suppose that $\det(A^\nu)\ne 0$. We~will derive a contradiction. The~last condition~\eqref{condb3} implies that
\begin{gather} \label{sigmab} \Re \det C = 1-\sigma_2(B) =0.\end{gather}

By~\eqref{sigmab} $\det C$ is purely imaginary, thus substituting~\eqref{expandS1} into~\eqref{condb1} implies that $\{A^\nu, B\}=0$, while substituting~\eqref{expandS2} into~\eqref{condb2} implies that $\sigma_2(A^\nu)=0$. Together with~\eqref{sigmab}, these in turn imply that $t \mapsto B + t A^\nu$ parametrizes a line on the quadric hypersurface in $\calS_3$ defined by~$\sigma_2(W) = 1$. However, by Remark~\ref{rmk:sigma2-metric}, the signature of $\sigma_2$ on $\calS_3$ is $(1,5)$. It~is well-known (and easy to check) that this implies that the hypersurface contains no lines. Hence $A^\nu=0$, which contradicts our assumption that $\det A^\nu \ne 0$.

Now suppose that $\cos\phi =0$. Then~\eqref{expandet} implies again that $\Re \det C = 1-\sigma_2(B) = 0$, so as before we conclude that $A^\nu=0$, contradicting our assumption that $M$ is not totally geodesic.
\end{proof}

For use below, we~note that multiplying the numerator and denominator of the right-hand side of~\eqref{expandS1} by ${\rm e}^{\ri \phi}$, and using the
fact that by~\eqref{condb0} the denominator is now real, we~see that~\eqref{condb1} is equivalent to
\begin{gather} \label{condc1}
\sigma_1(A^\nu (I -\adj B)) \cos \phi - 2 \{A^\nu,B\}\sin\phi = 0.
\end{gather}
Similarly, assuming~\eqref{condb0} shows that condition~\eqref{condb2} is equivalent to
\begin{gather} \label{condc2}
\sigma_2(A^\nu) \sin\phi + \sigma_1( B \adj A^\nu) \cos\phi = 0.
\end{gather}

\begin{Lemma} \label{no-rank-one}
The second fundamental form $A^{\nu}$ cannot have rank one for any normal direc\-tion~$\nu$.
\end{Lemma}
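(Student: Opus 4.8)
The plan is to argue by contradiction: suppose $A^\nu$ has rank exactly one for some normal direction $\nu$. By Proposition~\ref{thm1step1} we already know $\det A^\nu = 0$ and $\cos\phi \ne 0$, so rank one is the remaining ``large'' possibility to exclude (rank zero being allowed, and rank two impossible since the determinant vanishes only leaves rank $\le 2$; but I must also rule out rank two, which presumably is handled either here or just after — for this lemma I focus on rank one). Since $A^\nu$ is symmetric of rank one, I can choose an orthonormal frame $\ve_1,\ve_2,\ve_3$ on $M$ at the point in question so that $A^\nu = \operatorname{diag}(a,0,0)$ with $a \ne 0$. The strategy is then to feed this normal form into the three scalar equations~\eqref{condc1}, \eqref{condc2}, and~\eqref{condb3} (equivalently the relations derived from \eqref{expandS1}--\eqref{expandS2}), treating the entries of the symmetric matrix $B=(b_{ij})$ as unknowns, and show the only solution forces $a=0$.

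First I would compute the relevant $\sigma_j$ and adjugate quantities explicitly for $A^\nu = \operatorname{diag}(a,0,0)$. Here $\sigma_2(A^\nu)=0$ and $\sigma_3(A^\nu)=0$ automatically, and $\adj A^\nu = \operatorname{diag}(0, 0, 0)$ is not quite right — rather $\adj A^\nu$ has a single nonzero $2\times 2$ cofactor, so $\adj A^\nu = \operatorname{diag}(0, 0, 0)$ only if... in fact for $\operatorname{diag}(a,0,0)$ one has $\adj = \operatorname{diag}(0,0,0)$ is false: the $(1,1)$ cofactor is $0$, the $(2,2)$ cofactor is $0$, the $(3,3)$ cofactor is $0$; actually all $2\times 2$ minors vanish, so $\adj A^\nu = 0$. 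Good — so condition~\eqref{condc2} becomes $0 + \sigma_1(B\cdot 0)\cos\phi = 0$, which is vacuous, and condition~\eqref{condb3} (i.e.\ $\Re(\sigma_3(A^\nu C^{-1}))=\Re(\det A^\nu/\det C)=0$) is also vacuous since $\det A^\nu = 0$. So the only surviving constraint from the second fundamental form is the $j=1$ equation~\eqref{condc1}, which reads $\sigma_1(A^\nu(I - \adj B))\cos\phi - 2\{A^\nu,B\}\sin\phi = 0$. With $A^\nu = \operatorname{diag}(a,0,0)$ this is $a\bigl[(1 - (\adj B)_{11})\cos\phi - 2\{\,\cdot\,\}\sin\phi\bigr]=0$ where the polarized $\sigma_2$ term $\{A^\nu,B\}$ picks out a specific linear combination of the $b_{ij}$. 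Since $a\ne0$, this gives one linear relation among the entries of $B$ and the trigonometric data — far from a contradiction on its own.

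The key realization — and this is where I expect the real work to be — is that rank one is not a generic condition: demanding $\operatorname{rank} A^\nu = 1$ at an open set of points, or the interaction with the Codazzi equations and the other normal directions, must be used, OR the argument must exploit that $A^\nu$ rank one for \emph{one} $\nu$ still forces, via \eqref{eq:tw3} applied to all $\nu' = \nu + s\nu''$, additional equations. Concretely, I would replace $\nu$ by $\nu + s\,\tilde\nu$ for a second normal direction $\tilde\nu$ and expand~\eqref{eq:tw3} in powers of $s$; the coefficient of each power of $s$ must vanish, producing relations coupling $A^\nu$ to $A^{\tilde\nu}$. Taking $A^\nu$ rank one in the chosen frame, the $s^1$ and $s^2$ coefficients of the $\sigma_2$- and $\sigma_3$-conditions should force $A^{\tilde\nu}$ to have a very constrained form relative to the kernel of $A^\nu$, and combined with the quadric-has-no-lines argument from Proposition~\ref{thm1step1} (the signature $(1,5)$ of $\sigma_2$ on $\calS_3$) one should be able to show $B$ must lie on a line in the $\sigma_2=\text{const}$ quadric unless $a=0$. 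The main obstacle is bookkeeping: organizing the polarization identities for $\sigma_2$ and $\sigma_3$ on $\calS_3$ so that the rank-one normal form collapses them cleanly, and making sure one genuinely derives $a=0$ rather than merely a codimension-one constraint. I would lean on Appendix~\ref{sec:sigma-adj} (Propositions~\ref{prop:sigma-adj-1} and~\ref{prop:sigma-adj-2}) for the algebraic identities and on Remark~\ref{rmk:sigma2-metric} for the signature fact, and present the contradiction as: rank-one $A^\nu$ together with \eqref{condc1}--\eqref{condc2} and $\cos\phi\ne0$ forces $A^\nu$ into the tangent cone of a quadric with no lines, whence $A^\nu=0$.
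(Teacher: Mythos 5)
You have set up the right normal form and correctly observed that $\adj A^\nu = 0$, $\sigma_2(A^\nu)=0$, and $\det A^\nu=0$, so that conditions~\eqref{condc2} and~\eqref{condb3} become vacuous and only~\eqref{condc1} survives among the conditions that involve $A^\nu$. Where the argument goes astray is in concluding that ``the only surviving constraint'' is~\eqref{condc1} and that one is therefore stuck ``far from a contradiction.'' You have overlooked condition~\eqref{condb0} (equivalently~\eqref{expandet}), which does \emph{not} involve $A^\nu$ at all but constrains $B$ and $\phi$ directly, and which remains fully in force. That equation is the missing second relation that makes the paper's proof a purely pointwise, algebraic argument. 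After using the residual freedom of rotating $\ve_2$, $\ve_3$ to set $B_{12}=0$, equation~\eqref{condc1} becomes the scalar relation~\eqref{condc1case}, and~\eqref{condb0} expands to a second polynomial relation~\eqref{condb0v2} in the entries of $B$. Multiplying~\eqref{condc1case} by $(B_{11}+\tan\phi)$ and adding to~\eqref{condb0v2} cancels all $B_{11}$ terms, giving $B_{13}^2(\sin\phi+B_{22}\cos\phi) + (B_{22}+B_{33})\sec\phi = 0$; solving for $B_{33}$ and back-substituting into~\eqref{condc1case} produces
\[
\bigl( B_{13}^2 (\sin\phi + B_{22}\cos\phi)^2 + B_{22}^2 + B_{23}^2 + 1\bigr)\cos\phi = 0,
\]
which has no real solutions because $\cos\phi\ne 0$ by Proposition~\ref{thm1step1}. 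No Codazzi equations, no polarization along $\nu'=\nu+s\tilde\nu$, and no appeal to the signature-$(1,5)$ quadric-contains-no-lines fact is needed for this lemma; the detour you propose through those tools is speculative and would not close the gap, precisely because you have already discarded the one remaining scalar equation,~\eqref{condb0}, that actually does the work.
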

\begin{proof}
Suppose $A^{\nu}$ has rank one for some $\nu$. We~will obtain a contradiction. There is a frame with respect to which $A^{\nu} = A_0$ is of the form
\begin{gather*}
A_0 = \begin{pmatrix} 1 & 0 & 0 \\ 0 & 0 & 0 \\ 0 & 0 & 0 \end{pmatrix}.
\end{gather*}
This form is invariant under rotating the vectors $\ve_2$ and $\ve_3$ within the plane they span, so we may also assume without loss of generality that $B_{12} = 0$.

Substituting $A^\nu = A_0$ into~\eqref{condc1} gives
\begin{gather} \label{condc1case}
(B_{22} + B_{33}) \sin\phi + \big(B_{22}B_{33}-B_{23}^2-1\big) \cos\phi = 0.
\end{gather}
Equation~\eqref{condb0} in this case becomes
\begin{gather}
\big( \big(1 + B_{23}^2 - B_{22}B_{33}\big) B_{11} + B_{13}^2 B_{22} + B_{22} + B_{33} \big) \cos \phi \nonumber
\\ \qquad
{} - \big( (B_{22} + B_{33}) B_{11} - B_{13}^2 + B_{22}B_{33} - B_{23}^2 - 1 \big) \sin \phi = 0.
 \label{condb0v2}\end{gather}
Multiplying~\eqref{condc1case} by $(B_{11} + \tan \phi)$ and adding this to~\eqref{condb0v2} yields, after some manipulation, that
\begin{gather*}
B_{13}^2 (\sin \phi + B_{22}\cos\phi) + (B_{22} + B_{33})\sec\phi = 0.
\end{gather*}
Solving this equation for $B_{33}$ and substituting back into~\eqref{condc1case} gives
\begin{gather*}
\big( B_{13}^2 (\sin \phi + B_{22}\cos\phi)^2 + B_{22}^2 + B_{23}^2 + 1\big) \cos\phi = 0,
\end{gather*}
which, since $\cos\phi \ne 0$, has no real solutions.
\end{proof}

\begin{Proposition} \label{thm1step2}
At each point $p \in M$, there exists an orthonormal frame with respect to which the span
\begin{gather*}
|\II_p| = \{ \nu \cdot \II \,|\, \forall\, \nu \in N_p M\} \subseteq S^2 T^*_p M
\end{gather*}
lies in one of the following subspaces:
\begin{gather*}
\text{$(i)$\ } \W_1 = \left\{\!\!\begin{pmatrix} * & * & 0 \\ * & * & 0 \\ 0 & 0 & 0\end{pmatrix} \!\!\right\} ,\qquad
\text{$(ii)$\ } \W_2 = \left\{\!\! \begin{pmatrix} 0 & 0 & * \\ 0 & 0 & * \\ * & * & *\end{pmatrix} \!\!\right\} .
\end{gather*}
Moreover, if $\dim |\II_p| = 1$, then we are necessarily in case $(i)$.
\end{Proposition}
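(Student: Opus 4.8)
The plan is to forget submanifolds and argue inside the vector space $S^2T^*_pM\cong\calS_3$ of symmetric $3\times 3$ matrices, using that $|\II_p|$ is a \emph{linear} subspace of it. Since $\nu\mapsto A^\nu$ is linear, every element of $|\II_p|$ equals $A^\nu$ for some $\nu$; by Proposition~\ref{thm1step1} it has vanishing determinant, and by Lemma~\ref{no-rank-one} it is never of rank one, so every nonzero element of $|\II_p|$ has rank exactly two. The proposition thus reduces to a fact about a subspace $V\subseteq\calS_3$ on which every nonzero matrix has rank two: some element of $O(3)$ carries $V$ into $\W_1$ or into $\W_2$, and into $\W_1$ whenever $\dim V\le 1$. (Equivalently $\mathbb{P}(V)$ is a linear subspace of the secant variety of the Veronese surface that is disjoint from the Veronese itself; one could quote the classification of such subspaces, but I would prefer a self-contained argument.)

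The cases $\dim V\le 1$ are immediate: either $V=\{0\}\subseteq\W_1$, or $V=\R A_0$ with $A_0$ of rank two, which the spectral theorem diagonalizes to $\operatorname{diag}(\lambda_1,\lambda_2,0)\in\W_1$, $\lambda_1,\lambda_2\ne 0$; this also proves the last sentence of the proposition. Now suppose $\dim V\ge 2$, fix a nonzero $A_0\in V$, and pick an orthonormal frame with $A_0=\operatorname{diag}(a,b,0)$, $ab\ne 0$. For any $A_1\in V$ the whole pencil $A_0+tA_1$ lies in $V$, so $t\mapsto\det(A_0+tA_1)$ is the zero cubic. Its coefficient of $t$ is $\tr(\adj(A_0)A_1)=ab\,(A_1)_{33}$, and, using $(A_1)_{33}=0$, its coefficient of $t^2$ is $\tr(\adj(A_1)A_0)=-a\,(A_1)_{23}^2-b\,(A_1)_{13}^2$. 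Hence $(A_1)_{33}=0$ and $a\,(A_1)_{23}^2+b\,(A_1)_{13}^2=0$ for every $A_1\in V$.

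If $ab>0$ the second relation forces $(A_1)_{13}=(A_1)_{23}=0$ as well, so $V\subseteq\W_1$ and we are in case~$(i)$. If $ab<0$, reorder the frame so $a>0>b$ and put $\gamma=\sqrt{-b/a}>0$; then $A_1\mapsto\big((A_1)_{13},(A_1)_{23}\big)$ is linear from $V$ into the union of the two lines $(A_1)_{23}=\pm\gamma(A_1)_{13}$, hence into a single one of them. If that image is $\{0\}$, again $V\subseteq\W_1$. Otherwise, after possibly replacing $\ve_2$ by $-\ve_2$ we may assume $(A_1)_{23}=-\gamma(A_1)_{13}$ for all $A_1\in V$, and pass to the orthonormal frame $\ve_1'=\ve_3$, $\ve_2'=(\gamma\ve_1+\ve_2)/\sqrt{1+\gamma^2}$, $\ve_3'=(\ve_1-\gamma\ve_2)/\sqrt{1+\gamma^2}$, in which $\ve_2'$ is a null vector of $A_0$. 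A short computation shows that, for every $A_1\in V$, $(A_1)'_{11}=(A_1)_{33}=0$, $(A_1)'_{12}=\big(\gamma(A_1)_{13}+(A_1)_{23}\big)/\sqrt{1+\gamma^2}=0$, and $(A_1)'_{22}$ is a positive multiple of $\gamma^2(A_1)_{11}+2\gamma(A_1)_{12}+(A_1)_{22}$, which equals $-\det(A_1)/(A_1)_{13}^2$ when $(A_1)_{13}\ne 0$ and so vanishes; the leftover case $(A_1)_{13}=0$ follows by applying the same identity to $A_1+tA_2$ for some $A_2\in V$ with $(A_2)_{13}\ne 0$ and using $\det(A_1+tA_2)\equiv 0$. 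Hence $V\subseteq\W_2$ in the new frame, which is case~$(ii)$.

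The step I expect to be the main obstacle is the $ab<0$ branch: one must exhibit an explicit \emph{orthonormal} change of frame that simultaneously annihilates the top-left $2\times 2$ block of $A_0$ and of every other element of $V$, and recognize that the identity $\gamma^2(A_1)_{11}+2\gamma(A_1)_{12}+(A_1)_{22}=-\det(A_1)/(A_1)_{13}^2$ — valid once $(A_1)_{33}=0$ and $(A_1)_{23}=-\gamma(A_1)_{13}$ — is precisely what converts the constant-rank hypothesis into the vanishing of the remaining block entry. Everything else is a routine determinant expansion.
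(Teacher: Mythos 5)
Your argument is correct, but it takes a genuinely different (and somewhat more economical) route than the paper's. The paper proves this proposition by citing Proposition~\ref{sing3max} in Appendix~\ref{sec:app-singular}, which classifies \emph{all} maximal linear subspaces of the variety $\D_3$ of singular $3\times 3$ symmetric matrices up to $O(3)$-conjugation, including the case where the subspace contains rank-one matrices. You instead invoke Lemma~\ref{no-rank-one} up front to restrict to subspaces whose nonzero elements all have rank exactly two, which lets you skip the rank-one branch of the appendix argument entirely. Within that branch, the paper first uses the signature $(1,5)$ of $\sigma_2$ on $\calS_3$ to conclude that any rank-two element of the subspace must have eigenvalues of opposite signs (so your $ab>0$ subcase never actually occurs when $\dim V\ge 2$), and then works with the codimension-one slice $\V'=\{B:B_{11}=0\}$; you avoid the signature argument altogether by just treating the $ab>0$ and $ab<0$ cases symmetrically, which is legitimate since the conclusion $V\subseteq\W_1$ in the $ab>0$ case is still one of the allowed alternatives, vacuously or not. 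Your direct determinant-pencil expansion $\det(A_0+tA_1)\equiv 0$, the observation that the image of $A_1\mapsto\big((A_1)_{13},(A_1)_{23}\big)$ must land in a single line, the explicit orthonormal change of frame, and the identity $\gamma^2 a_{11}+2\gamma a_{12}+a_{22}=-\det(A_1)/a_{13}^2$ (extended by continuity to the locus $a_{13}=0$) all check out. The trade-off is that the paper's appendix result is a reusable, self-contained linear-algebraic classification, while your argument is a leaner proof tailored to the hypotheses already in hand; both are valid.
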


\begin{proof} The two possible forms for $|\II|$ follow from Proposition~\ref{sing3max} in Appendix~\ref{sec:app-singular}, and the final statement is established in the first paragraph of the proof of Proposition~\ref{sing3max}.
\end{proof}

\begin{Proposition} \label{thm1step3}
If $M$ falls into case $(i)$ of Proposition~$\ref{thm1step2}$, then $B_{33}=-\tan\phi$ with respect to the same orthonormal frame.
If $M$ does not fall into case $(i)$ then $M$ is a generalized helicoid in $\R^5$.
\end{Proposition}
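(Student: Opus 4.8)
My plan is to split along the dichotomy of Proposition~\ref{thm1step2} and to extract everything from the scalar identities~\eqref{expandet},~\eqref{condc1},~\eqref{condc2}, together with Proposition~\ref{thm1step1}, Lemma~\ref{no-rank-one}, and Proposition~\ref{thm1step2}. Throughout I would work at a point $p$ where $\II\ne 0$ (which is dense since $M$ is not totally geodesic, and, $M$ being real-analytic, it suffices to argue there). \emph{Case~$(i)$} is short: the distinguished frame places every $A^\nu=\nu\cdot\II$ in $\W_1$, so $\ve_3$ spans the relative nullity, and by Lemma~\ref{no-rank-one} some $A^\nu$ has rank~$2$, so its upper-left $2\times2$ block is invertible. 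For such a matrix one has $\adj A^\nu=\sigma_2(A^\nu)\,E$, where $E$ has a single $1$ in the $(3,3)$-slot, whence $\sigma_1(B\adj A^\nu)=\sigma_2(A^\nu)B_{33}$ and~\eqref{condc2} becomes $\sigma_2(A^\nu)(\sin\phi+B_{33}\cos\phi)=0$; since $\sigma_2(A^\nu)\ne0$ and $\cos\phi\ne0$ (Proposition~\ref{thm1step1}), this yields $B_{33}=-\tan\phi$.

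In \emph{Case~$(ii)$}, $|\II_p|\subseteq\W_2$ but $|\II_p|\not\subseteq\W_1$, and the first task is to pin down the algebra. By Proposition~\ref{thm1step2} we cannot have $\dim|\II_p|=1$; $\dim|\II_p|=0$ is Case~$(i)$; and $\dim|\II_p|=3$ would make $|\II_p|=\W_2$ contain a rank-one matrix, contradicting Lemma~\ref{no-rank-one}. So $\dim|\II_p|=2$, every nonzero element of $|\II_p|$ has rank~$2$, and hence $\II(\ve_1,\ve_3),\II(\ve_2,\ve_3)$ are linearly independent while $\II(\ve_i,\ve_j)=0$ for $i,j\in\{1,2\}$. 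Writing a general $A^\nu\in\W_2$ with $(A^\nu)_{13}=p$, $(A^\nu)_{23}=q$, $(A^\nu)_{33}=r$ and all other entries zero, $\adj A^\nu$ turns out to be independent of $r$, and matching the coefficients of $p^2$, $q^2$, $pq$ in~\eqref{condc2} forces $B_{11}=B_{22}=-\tan\phi$ and $B_{12}=0$. With this,~\eqref{condc1} collapses — using $\tan\phi=\sin\phi/\cos\phi$ and the identity $(1-\tan^2\phi)\cos\phi+2\tan\phi\sin\phi=\sec\phi$ — to $r\sec\phi=0$, so $\II(\ve_3,\ve_3)=0$; and~\eqref{expandet} then collapses (same identities) to $B_{33}\sec\phi-\tan\phi\sec\phi=0$, i.e.\ $B_{33}=+\tan\phi$. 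In particular $\tr A^\nu=0$ and $\det A^\nu=0$ for all $\nu$, so $M$ is austere.

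The remaining task is to recognise this austere $3$-fold. Applying the Codazzi equation in the adapted frame — whose only nonzero second fundamental form components are $\II(\ve_1,\ve_3)$ and $\II(\ve_2,\ve_3)$ — will force $\langle\nabla_XY,\ve_3\rangle=0$ for $X,Y\in D:=\operatorname{span}(\ve_1,\ve_2)$; since $D$ is integrable by torsion-freeness, its leaves are totally geodesic in $M$, and as $\II|_{D\times D}=0$ they are totally geodesic in $\R^n$, hence affine $2$-planes, so $M$ is ruled by planes. A second use of Codazzi shows $\nabla^\perp_X\xi\in N_1$ for every local section $\xi$ of the first normal space $N_1=\operatorname{span}\{\II(\ve_1,\ve_3),\II(\ve_2,\ve_3)\}$, so $N_1$ is a parallel rank-$2$ normal subbundle, and the standard reduction of codimension places $M$ in an affine $\R^{3+2}=\R^5$. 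Finally one must identify the resulting austere $3$-fold in $\R^5$, ruled by $2$-planes and with second fundamental form of this special type, with the generalized helicoid swept out by planes — either by integrating the residual structure equations, with $B$ and $\II$ now determined up to the two functions $B_{13},B_{23}$, or by invoking Bryant's classification of austere $3$-folds~\cite{Br} (see also~\cite{II,II2}). I expect this last identification, together with the reduction-of-codimension bookkeeping, to be the genuine obstacle; everything earlier is the linear algebra sketched above, carried out in the $\mathrm{O}(2)\times\{\pm1\}$-adapted frame supplied by Proposition~\ref{thm1step2}.
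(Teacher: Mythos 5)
Your proposal is correct and follows essentially the same strategy as the paper: factor~\eqref{condc2} using Lemma~\ref{no-rank-one} in case~$(i)$, and in case~$(ii)$ pin down $\dim|\II_p|=2$, read off $B_{11}=B_{22}=-\tan\phi$, $B_{12}=0$, and $\tr A^\nu=0$ from~\eqref{condc2} and~\eqref{condc1}, then reduce the codimension and invoke Bryant's classification of austere $3$-folds with simple $|\II|$. Your extra observation $B_{33}=+\tan\phi$ from~\eqref{expandet} is correct but unused; the paper carries out the final identification in case~$(ii)$ by appealing directly to Theorem~\ref{codim-reduction-thm} and to Bryant's Theorem~3.1, which are precisely the two ingredients you flag as the remaining work.
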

\begin{proof}
Suppose that we are in case $(i)$. Then one can compute that equation~\eqref{condc2} factors as
\begin{gather*}
\big(A^\nu_{11} A^\nu_{22}-(A^\nu_{12})^2\big) (\sin\phi + B_{33}\cos\phi) = 0.
\end{gather*}
Since $M$ is not totally geodesic, Lemma~\ref{no-rank-one} tells us that there is an $A^\nu$ that has rank two. It~follows that $B_{33}=-\tan\phi$.

Now suppose that we are not in case $(i)$. By Proposition~\ref{thm1step2} we know that $\dim |\II_p| \ge 2$. Also, by Lemma~\ref{no-rank-one} we know that $\II_p$ cannot contain any rank one matrices, so it must be two-dimensional and spanned by matrices of the form
\begin{gather*}
A_1 = \begin{pmatrix} 0 & 0 & 1 \\ 0 & 0 & 0 \\ 1 & 0 & * \end{pmatrix}, \qquad
A_2 = \begin{pmatrix} 0 & 0 & 0 \\ 0 & 0 & 1 \\ 0 & 1 & * \end{pmatrix}.
\end{gather*}
{\sloppy Substituting $A^\nu=A_1$ and $A^\nu=A_2$ into~\eqref{condc2} yields respectively $B_{22} = -\tan \phi$ and \mbox{$B_{11}=-\tan\phi$}. Using these values
and substituting $A^\nu = A_1 + A_2$ into~\eqref{condc2} yields $B_{12}=0$. Fina\-lly using these values for $B_{11}$, $B_{12}$, $B_{22}$ and substituting either $A^\nu= A_1$ or $A^\nu =A_2$ into~\eqref{condc1} yields that the $(3,3)$ entry of $(A^{\nu})$ is zero, which implies that $\tr A^\nu = 0$ for all $\nu$. Thus in fact~$M$ is minimal, and with respect to an appropriate basis, we~have
\begin{gather*}
|\II_p| \subset \V'_2= \left\{\!\! \begin{pmatrix} 0 & 0 & * \\ 0 & 0 & * \\ * & * & 0\end{pmatrix} \!\!\right\}.
\end{gather*}}\noindent
It now follows that $|\II|$ is \emph{simple} in the sense of Bryant~\cite{Br}, and hence by~\cite[Theorem 3.1]{Br} that~$M$ must be a generalized helicoid. Because $|\II_p|$ has dimension at most two, the first osculating space of $M$ at each point has dimension at most five. Moreover, because the first prolongation of $\V'_2$ has dimension zero it follows from Theorem~\ref{codim-reduction-thm} in Appendix~\ref{codim-reduction-sec} that the first osculating space of $M$ is fixed, so that $M$ lies in a 5-dimensional subspace of $\R^n$.
\end{proof}

\begin{Proposition} \label{ruledprop}
If $M$ falls into case $(i)$ of Proposition~$\ref{thm1step2}$, then $M$ is ruled by lines.
\end{Proposition}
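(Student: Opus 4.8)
The plan is to show that, in case $(i)$, the immersion $M\subset\R^n$ has index of relative nullity at least one at every point, and then to run the standard relative-nullity argument, so that a dense open subset of $M$ is foliated by open pieces of affine subspaces --- the desired ruling lines. First I would unpack case $(i)$: with respect to the frame of Proposition~\ref{thm1step2} every $A^\nu\in\W_1$ annihilates $\ve_3$, so $\ve_3(p)$ lies in the relative nullity space $\Delta_p:=\{X\in T_pM:\II_p(X,\,\cdot\,)=0\}$, and hence $\dim\Delta_p\ge 1$ for all $p\in M$. Since $\dim\Delta_p=3-\operatorname{rank}\big(X\mapsto\II_p(X,\,\cdot\,)\big)$ and the rank is lower semicontinuous, $p\mapsto\dim\Delta_p$ is upper semicontinuous; let $m\ge 1$ be its minimum value and $U\subseteq M$ the nonempty, dense open set on which $\dim\Delta_p\equiv m$. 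On $U$ the subbundle $\Delta$ is then smooth of rank $m$.

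Next I would show that $\Delta|_U$ is autoparallel, hence integrable, with affine leaves. For smooth sections $S,T$ of $\Delta|_U$ and an arbitrary vector field $X$, every term in the Codazzi identity $(\nabla^\perp_X\II)(T,S)=(\nabla^\perp_T\II)(X,S)$ that pairs $\II$ with $S$ or with $T$ vanishes, and what remains is $0=-\II(X,\nabla_T S)$; as $X$ is arbitrary this gives $\nabla_T S\in\Delta$. Thus $\Delta|_U$ is autoparallel, so integrable, and each leaf $\mathcal L$ is totally geodesic in $M$; since moreover $\II$ vanishes identically on $\Delta$, the leaf is totally geodesic in $\R^n$ as well, hence an open subset of an affine subspace of $\R^n$ of dimension $m\ge 1$. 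Therefore through every point of the dense open set $U$ there runs an open line segment contained in $M$, so $M$ is ruled by lines over $U$.

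The one delicate point is extending this from $U$ to all of $M$: on $M\setminus U$ the null direction $\ve_3$ need not be canonically defined, since there $\dim|\II_p|$ has dropped and $\dim\Delta_p\ge 2$. For $p\in M\setminus U$ I would choose $p_i\to p$ with $p_i\in U$, pass to a convergent subsequence of the directions of the ruling lines through the $p_i$, and verify that the limiting line meets $M$ in a nondegenerate segment through $p$, using that $M$ is locally closed in $\R^n$; confirming nondegeneracy of this limiting segment is the main obstacle. The remainder is just the classical relative-nullity argument, and in particular the classification of Bryant (invoked in Proposition~\ref{thm1step3}) plays no role in case $(i)$.
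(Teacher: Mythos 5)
Your proposal is correct in its core and uses a genuinely different, more classical route than the paper: you invoke the relative-nullity foliation and the Codazzi equation, whereas the paper works directly with the moving-frame structure equations and derives $\wt^j_3\equiv 0 \bmod \wt^1,\wt^2$ from the vanishing of ${\rm d}\wt^4_3$ (using that $\ve_4\cdot\II$ has rank two by Lemma~\ref{no-rank-one}), so that ${\rm d}\ve_3\equiv 0$ along the $\ve_3$-direction and those integral curves are straight. Both arguments rest on the same observation, that in case~$(i)$ the frame vector $\ve_3$ lies in the common kernel $\Delta_p$ of all $A^\nu$, and your Codazzi computation is the coordinate-free avatar of the paper's ${\rm d}\wt^a_3$ computation. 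One advantage of your version is that it requires no choice of a rank-two $A^\nu$ and works for any nullity index $m\ge 1$, producing ruling by $m$-planes; the paper's version is shorter and yields the ruling line explicitly as the integral curve of $\ve_3$.

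Two small caveats. First, the set where $\dim\Delta_p$ attains its \emph{minimum} is open but need not be dense (consider an upper semicontinuous function that drops on a half-space); what is dense is the open set on which $\dim\Delta_p$ is \emph{locally constant}, and that is where your smoothness and autoparallelity argument should be run. In the present setting, Lemma~\ref{no-rank-one} and $\det A^\nu=0$ force $\dim\Delta_p\in\{1,3\}$, with $\dim\Delta_p=3$ meaning $\II_p=0$; on open totally geodesic pieces the ruling is trivial, so the issue is harmless, but the statement as written should be corrected. Second, the extension of the ruling to the degenerate locus is flagged but not settled in your proposal; the paper's own moving-frame proof also quietly works on the dense open set where the adapted frame and the rank-two $A^\nu$ exist smoothly, so this gap is shared rather than introduced by you. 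Your closing remark that Bryant's classification is not needed here is correct.
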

The proof of this proposition is relatively simple, but uses the method of moving frames. Before giving the proof, we~recall some details about the frame bundle which will be needed in~the proof as well as in later sections.

Let $\F$ be the oriented orthonormal frame bundle of $\R^n$, whose fiber at a point $p$ consists of~all oriented orthonormal bases of $T_p \R^n$. We~may think of a point $u$ in the fiber as a matrix $U \in {\rm SO}(n)$ whose columns comprise the corresponding frame. The~frame bundle carries a~canonical $\R^n$-valued $1$-form $\vomega$ such that
\begin{gather} \label{defcan}
\vomega_u(\vv) = U^{-1} \pi_* \vv,
\end{gather}
where $\pi\colon\F \to \R^n$ is the basepoint map and we identify $\pi_*\vv \in T_{\pi(u)} \R^n$ with a column vector in~$\R^n$ in the usual way. (In other words, the entries of $\vomega_u(\vv)$ give the coefficients of the expansion of $\pi_* \vv$ in terms of the frame corresponding to $u$.) In what follows let $\w^r$ denote the components of $\vomega$, where $1 \le r,s,t \le n$.

Suppose $M^k \subset \R^n$ is a submanifold and $f$ is a local section of $\F\vert_M$, that is a local oriented orthonormal frame field with component vector fields $\ve_1, \dots, \ve_n$. Then it follows from~\eqref{defcan} that the $\R^n$-valued function $\vx$ on $M$ giving the position in $\R^n$ satisfies
\begin{gather} \label{deex}
{\rm d}\vx = \ve_r f^* \w^r.
\end{gather}
In particular, if the frame $f$ is \emph{adapted to $M$} in the sense that $\ve_1, \dots, \ve_k$ span the tangent space to $M$ at each point, then $f^* \w^a=0$ for $k < a\le n$.

The frame bundle also carries a matrix-valued connection form $\vOmega$, taking value in $\mathfrak{so}(n)$, which satisfies the structure equation
\begin{gather*}
{\rm d}\vomega = -\vOmega \wedge \vomega, \qquad {\rm d}\vOmega = - \vOmega \wedge \vOmega.
\end{gather*}
In terms of components, these equations read
\begin{gather} \label{streq}
{\rm d}\w^r = -\w^r_s \wedge \w^s, \qquad {\rm d}\w^r_s = -\w^r_t \wedge \w^t_s.
\end{gather}
The existence (and uniqueness) of the connection form is a special case of the existence of the Levi-Civita connection on a Riemannian manifold $N$. However, when $N=\R^n$ an easy way to~obtain the connection form, in terms of its components $\omega^r_s$, is to regard the members $e_r$ of the frame as $\R^n$-valued functions on $\F$, and resolve their exterior derivatives in terms of the frame itself:
\begin{gather} \label{dees}
{\rm d}\ve_r = \ve_s \omega^s_r.
\end{gather}
Returning to the situation of an adapted frame field $f$ along a submanifold $M^k$, it follows from~\eqref{dees} that the pullbacks of the $\w^a_j$ encode the second fundamental form of $M$:
\begin{gather} \label{fundyomega}
\II(\ve_i, \ve_j) = \big(\ve_i \intprod \tw^a_j\big) \ve_a,
\end{gather}
where we use $\ve_i$, $\ve_j$ for $1\le i,j \le k$ to denote the frame vector fields tangent to $M$, and the tilde accent denotes pullback by $f$.

\begin{proof}[Proof of Proposition~\ref{ruledprop}]
Let $f=(\ve_1, \dots, \ve_n)$ be an adapted local frame along $M$ such that with respect to the basis $\ve_1$, $\ve_2$, $\ve_3$ for $T_p M$, the space $|\II|$ assumes the form $(i)$ in Proposition~\ref{thm1step2}. Then~\eqref{fundyomega} implies that $\wt^a_3=0$ for $4 \le a \le n$. Then from~\eqref{dees} we have
\begin{gather} \label{dee3}
{\rm d}\ve_3 = \ve_1 \wt^1_3 + \ve_2 \wt^2_3.
\end{gather}
We will show that the frame vector $\ve_3$ is tangent to a ruling along $M$.

By Proposition~\ref{thm1step2}, we~can assume without loss of generality that $\ve_4 \cdot \II$ has rank two. Then
\begin{gather*}
0 = {\rm d}\wt^4_3 = -\wt^4_1 \wedge \wt^1_3 - \wt^4_2 \wedge \wt^2_3 = \wt^1_3 \wedge \big(A_{1j} \wt^j\big) + \wt^2_3 \wedge \big(A_{2j} \wt^j\big),
\end{gather*}
where $A_{ij}$ for $1\le i,j \le 2$ are the entries of a rank two matrix. Since the $1$-forms in parentheses on the right are linearly independent, we~have
\begin{gather} \label{noturn}
\wt^j_3 \equiv 0 \mod \wt^1, \wt^2.
\end{gather}
That is, $\wt^1_3$ and $\wt^2_3$ are linear combinations of $\wt^1$ and $\wt^2$. Then from~\eqref{dee3} we have ${\rm d}\ve_3 \equiv 0 \mod \wt^1, \wt^2$. Thus, $\ve_3$ is fixed as one moves along $M$ in the direction of~$\ve_3$.
\end{proof}

Before turning to the construction of examples of twisted-austere pairs, we~now derive some equations that relate the adapted moving frame (and the associated
$1$-forms) to the matri\-ces~$A^\nu$,~$B$ that satisfy the twisted-austere conditions. (These equations will be needed in the next two sections.)
First, equation~\eqref{fundyomega} can be rewritten as
\begin{gather} \label{waiA}
\tw^a_i = (A^a)_{ij} \tw^j,
\end{gather}
where the matrix $A^a$ gives the components of the second fundamental form in the direction of~$\ve_a$. Next, because the $\wt^i$ form a coframe
along $M$, we~can expand
\begin{gather*} %\label{alphaformula}
\mu = \mu_i \tw^i.
\end{gather*}
Then $\nabla \mu = B_{ij} \wt^i \otimes \wt^j$, where the $B_{ij}$ are calculated using
\begin{gather} \label{demuse}
{\rm d}\mu_i - \mu_j \wt^j_i = B_{ij} \wt^j.
\end{gather}

In terms of this equation, the results of Propositions~\ref{thm1step3} and~\ref{ruledprop} can be interpreted as follows. For a base $M$ carrying an adapted moving frame with respect to which $|\II|$ lies in $\W_1$, the frame vector $\ve_3$ points along the ruling. Thus, $\ve_3 \intprod \mu = \mu_3$ is a natural geometric invariant which we will refer to as the \emph{slope} of the twisted-austere pair $(M,\mu)$. (Note that this depends on~a~choice of~orientation for the rulings.) Then using~\eqref{demuse}, along with~\eqref{noturn}, we~can interpret the condition $B_{33}=-\tan\phi$ as saying that the derivative of the slope along the ruling is equal to~the constant~$-\tan\phi$.

\section{Cylindrical examples} \label{cylindersec}

We saw in Theorem~\ref{theoremone} that if $M^3$ is the base of a twisted-austere pair, then either $M$ is ruled by~lines or is a generalized helicoid in $\R^5$ which is ruled by planes. In this section we will construct special examples of twisted-austere pairs $\big(M^3, \mu\big)$ assuming that $M$ is ruled by \emph{parallel} lines, that is $M$ is a cylinder. From now on, it will be convenient for us to take the ambient space as $\R^{n+1}$, equipped with Euclidean coordinates $x^0, x^1, \dots, x^n$ such that the rulings point in~the~$x^0$ coordinate direction. Corresponding to this, we~now change to using $\ve_0$, $\ve_1$, $\ve_2$ to~denote the members of the moving frame that are tangent to $M$, with $\ve_0$ pointing along the rulings.

Let $\Sigma_0$ be the surface obtained by intersecting $M$ with a copy of $\R^{n}$ perpendicular to the rulings. (For the sake of argument, let this $\R^{n}$ be the hyperplane given by $x^0=0$.) We can construct an adapted moving frame along $M$ by taking an adapted moving frame $\ve_1, \ve_2, \ve_3, \dots, \ve_n$ along $\Sigma_0$ (such that $\ve_1$, $\ve_2$ are tangent to the surface), parallelly translating these vectors along the rulings, and completing the frame with the constant unit vector field $\ve_0$ tangent to the rulings. In what follows, it will be convenient to take the index ranges $0 \le \alpha,\beta \le 2$, $1 \le i,j,l,m \le 2$ and $3 \le a,b \le n$; so, for example, equation~\eqref{demuse} now reads
\begin{gather} \label{demusea}
{\rm d}\mu_\alpha - \mu_\beta\wt^\beta_\alpha = B_{\alpha\beta} \wt^\beta.
\end{gather}

The canonical forms and connection forms on $M$ defined by~\eqref{deex} and~\eqref{dees} satisfy{\samepage
\begin{enumerate}[$(i)$]\itemsep=0pt
\item $\wt^1$, $\wt^2$ and $\wt^1_2$ are basic for the projection to $\Sigma_0$, and the same is true for the $\wt^a_i$,
\item because $\ve_0$ is constant on $M$, the forms $\wt^i_0$ and $\wt^a_0$ are zero,
\item as a result, the first structure equation in~\eqref{streq} implies that $\wt^0$ is closed.
\end{enumerate}
In fact, if we let $u$ be the restriction to $M$ of the ambient coordinate $x^0$, then $\wt^0={\rm d}u$.}

Suppose that, on $\Sigma_0$, we~have $\wt^a_i = h^a_{ij} \wt^j$, so that the $h^a_{ij}$ are the components of the second fundamental form of $\Sigma_0$ as a submanifold in $\R^n$. Pulling the $\wt^a_i$ back to $M$, we~see that the components of $M$'s second fundamental form are given by
\begin{gather} \label{cylinderAform}
A^a = \begin{pmatrix} 0 & 0 \\ 0 & h^a_{ij} \end{pmatrix},
\end{gather}
where now the zeros are in the first row and column, corresponding to the tangent vector~$e_0$. Since these matrices are singular, the highest-order twisted-austere condition~\eqref{condb3} holds automatically. Since $M$ is not totally geodesic, Lemma~\ref{no-rank-one} tells us that $A^a$ has rank two for at least one normal direction $e^a$, and by Proposition~\ref{thm1step3} the next-highest-order twisted-austere condition forces $B_{00}=-\tan\phi$.

We now consider the $u$-dependence of the components of $\mu$ and its covariant derivative. Because $\wt^i_0=0$ and $B_{00}=-\tan\phi$, equation~\eqref{demusea} implies that
\begin{gather*}
{\rm d}(\mu_0 + u \tan\phi) = B_{0i} \wt^i.
\end{gather*} Since the right-hand side of the above equation is semibasic for the projection to $\Sigma_0$, and recalling that $\wt^0 = {\rm d}u$, we~can write
\begin{gather} \label{mu0eq}
\mu_0 = k\sec\phi - u \tan\phi,
\end{gather}
where $k$ is a smooth function on $\Sigma_0$. Define the smooth functions $k_i$ on $\Sigma_0$ by ${\rm d}k = k_i \wt^i$, so that $B_{0i} = k_i \sec \phi$. Then~\eqref{demusea} implies that ${\rm d}(\mu_i - u k_i \sec\phi)$ is semibasic for $\Sigma_0$, so we may set
\begin{gather*}
\mu_i = \lambda_i + u k_i \sec\phi,
\end{gather*}
where the $\lambda_i$ are functions on $\Sigma_0$. (Note, however, that these depend on the choice of frame on~$\Sigma_0$, while $k$ does not.) Substituting these into
\eqref{demusea} then gives
\begin{gather*}
{\rm d}\lambda_i -\lambda_j \wt^j_i = B_{ij} \wt^j - u\sec\phi \big({\rm d}k_i -k_j \wt^j_i\big).
\end{gather*}
Expanding both sides as polynomials in $u$ and comparing coefficients, we~obtain
\begin{gather*}
B_{ij} = \lambda_{ij}+ u k_{ij} \sec\phi,
\end{gather*}
where we have set
\begin{gather*}
{\rm d}\lambda_i = \lambda_j \wt^j_i + \lambda_{ij} \wt^j, \qquad
{\rm d}k_i = k_j\wt^j_i + k_{ij} \wt^j.
\end{gather*}
The $k_{ij}$ are the components of the Hessian $\nabla^2 k$ with respect to the coframe on $\Sigma_0$, and the $\lambda_{ij}$ are also symmetric in $i$ and $j$, indicating that $\lambda = \lambda_i \wt^i$ (which is well-defined, independent of choice of coframe) is a closed $1$-form on $\Sigma_0$. In terms of these tensor components, we~have
\begin{gather} \label{cylinderBform}
B = \begin{pmatrix} -\tan \phi & k_i \sec\phi \\ k_i \sec\phi & \lambda_{ij} + u k_{ij} \sec\phi \end{pmatrix}.
\end{gather}

Substituting~\eqref{cylinderAform} and~\eqref{cylinderBform} into the two remaining twisted-austere conditions~\eqref{expandet} and~\eqref{condc1}, and equating powers of $u$, gives
\begin{gather}
\big(1 +k_2^2\big) \lambda_{11} - 2k_1k_2 \lambda_{12} + \big(1 + k_1^2\big) \lambda_{22} = -\big(k_1^2 +k_2^2\big) \tan \phi, \label{krell}
\\
\big(1+k_2^2\big) k_{11} - 2k_1 k_2 k_{12} + \big(1+k_1^2\big) k_{22} =0,
\label{krone}
\\
\big(1+k_2^2\big) h^a_{11} - 2 k_1 k_2 h^a_{12} + \big(1 + k_1^2\big) h^a_{22} = 0.
\label{krtwo}
\end{gather}
We now give a geometric interpretation of the last two equations.

\begin{Proposition}
Let $\Sigma_0 \subset \R^n$ be a surface and let $k$ be a smooth function on $\Sigma_0$. We~endow $\Sigma_0$ with the metric $g_0$ it inherits from $\R^{n}$. Let $dk$ and $\nabla^2 k$ have components $k_i$ and $k_{ij}$ respectively, and let $h^a_{ij}$ be the components of the second fundamental form of $\Sigma_0$, relative to an adapted orthonormal frame $e_1$, $e_2$, $e_a$. Let $\Sigma = \big\{ (p, k(p)) \in \R^n \times \R \,|\, p \in \Sigma_0 \big\}$ be the graph of $k$. Then $\Sigma$ is a minimal surface in $\R^{n+1}$ if and only if $k$ satisfies~\eqref{krone} and~\eqref{krtwo}.
\end{Proposition}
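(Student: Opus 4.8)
The plan is to compute the second fundamental form of $\Sigma$ in $\R^{n+1}$ explicitly from the data $(g_0, k, h^a_{ij})$ on $\Sigma_0$, and to match the vanishing of the two components of the mean curvature vector with the two equations \eqref{krone} and \eqref{krtwo}.

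Write $\R^{n+1} = \R^n \times \R$ with extra coordinate $t$, and let $F\colon \Sigma_0 \to \R^{n+1}$, $F(p) = (p, k(p))$, so $\Sigma = F(\Sigma_0)$. Pushing forward the frame $e_1, e_2$ of $\Sigma_0$ gives a tangent frame $f_i = (e_i, k_i)$ along $\Sigma$; hence the induced metric is $g_{ij} = \delta_{ij} + k_i k_j$, with $\det g = W^2$ where $W = \sqrt{1 + k_1^2 + k_2^2}$, and $g^{ij} = \delta^{ij} - W^{-2} k_i k_j$ by the Sherman--Morrison formula. Next identify the normal bundle of $\Sigma$, which has rank $n - 1$: I claim it is orthonormally framed by the vectors $e_a \in \R^n \times \{0\}$ (for $3 \le a \le n$) together with the unit vector $N = W^{-1}(-k_i e_i + \partial_t)$. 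Indeed $\langle e_a, f_i \rangle = 0$, $\langle N, f_i \rangle = -k_i + k_i = 0$, $\langle N, e_a \rangle = 0$ and $|N| = 1$, all because $k_i e_i$ is tangent to $\Sigma_0$; and $(n-2) + 1 = n-1$ shows this is a full normal frame.

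Now compute the second fundamental form, writing $\II_\nu(X,Y) = \langle D_X Y, \nu \rangle$ for the scalar part in a normal direction $\nu$, where $D$ is the ambient flat connection. Because $F$ is a graph over $\Sigma_0$, differentiating the $\R^{n+1}$-valued field $f_j$ in the direction $f_i$ reduces to differentiating along $\Sigma_0$ in the direction $e_i$, so $D_{f_i} f_j = (D_{e_i} e_j,\, e_i(k_j))$, where $D_{e_i} e_j = \nabla_{e_i} e_j + h^a_{ij} e_a$ by the Gauss formula, with $\nabla$ the Levi-Civita connection of $(\Sigma_0, g_0)$. Pairing with $e_a$ gives $\II_{e_a}(f_i, f_j) = h^a_{ij}$. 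Pairing with $N$, only the tangential part $\nabla_{e_i} e_j$ and the term $e_i(k_j)$ contribute; since $e_i(k_j) = (\nabla^2 k)(e_i, e_j) + \langle \nabla_{e_i} e_j, k_l e_l \rangle = k_{ij} + \langle \nabla_{e_i} e_j, k_l e_l \rangle$, these combine to give $\II_N(f_i, f_j) = W^{-1} k_{ij}$.

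Finally take $g$-traces. Using $g^{ij} = \delta^{ij} - W^{-2} k_i k_j$ one computes $W^2 g^{ij} h^a_{ij} = (1 + k_2^2) h^a_{11} - 2 k_1 k_2 h^a_{12} + (1 + k_1^2) h^a_{22}$, which is the left-hand side of \eqref{krtwo}, and similarly $W^3 g^{ij} k_{ij}$ equals the left-hand side of \eqref{krone}. Since $\Sigma \subset \R^{n+1}$ is minimal exactly when its mean curvature vector $g^{ij} \II(f_i, f_j)$ vanishes, equivalently when $g^{ij} \II_\nu(f_i, f_j) = 0$ for each member $\nu$ of the orthonormal normal frame $\{e_a, N\}$, we conclude that $\Sigma$ is minimal if and only if \eqref{krtwo} holds for all $a$ and \eqref{krone} holds. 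The only step that requires real care is the cancellation producing $\II_N = W^{-1} k_{ij}$: it is exactly the vanishing of the connection terms there that isolates the intrinsic Hessian $k_{ij}$ and makes \eqref{krone} the minimality condition in the $N$-direction; everything else is the inversion of $g$ and routine bookkeeping.
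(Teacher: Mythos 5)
Your proof is correct, and it takes a genuinely different route from the paper's. You compute the second fundamental form of $\Sigma$ in $\R^{n+1}$ directly: you write down an explicit orthonormal normal frame $\{e_a, N\}$ for $\Sigma$ (with $N = W^{-1}(-k_i e_i + \partial_t)$ the ``graph normal''), and you show $\II_{e_a}(f_i,f_j) = h^a_{ij}$ and $\II_N(f_i,f_j) = W^{-1}k_{ij}$, with the key cancellation of the $\nabla_{e_i}e_j$ terms correctly isolating the intrinsic Hessian. Tracing against $g^{ij} = \delta^{ij} - W^{-2}k_ik_j$ then yields \eqref{krtwo} and \eqref{krone} as the mean-curvature components. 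The paper instead works on $\Sigma_0$ with the pulled-back metric $\ghat$: it compares the two Levi--Civita connections, computes $\hatnabla^2 k = (\det\ghat)^{-1}\nabla^2 k$, and reads \eqref{krone} as $\Delta_{\ghat}k=0$ and \eqref{krtwo} as the $\ghat$-trace of $\Sigma_0$'s second fundamental form vanishing, closing the argument with the characterization of minimality via harmonicity of the ambient coordinate functions. Your approach is more elementary and explicit; it makes the geometric content of the two equations (one for the vertical normal $N$, one for each horizontal normal $e_a$) immediately transparent, while the paper's argument is more intrinsic and avoids writing out a normal frame. One small slip: near the end you write ``$W^3 g^{ij}k_{ij}$ equals the left-hand side of \eqref{krone}''; since $\II_N(f_i,f_j) = W^{-1}k_{ij}$ and $W^2 g^{ij}k_{ij}$ is the left-hand side of \eqref{krone}, you should have $W^3 g^{ij}\II_N(f_i,f_j)$ (equivalently $W^2 g^{ij}k_{ij}$). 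This is a cosmetic bookkeeping error; the vanishing conditions, and hence the equivalence, are unaffected.
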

\begin{proof} Let $\ghat$ be the pullback to $\Sigma_0$ of the ambient metric on $\Sigma$. Then
\begin{gather} \label{jihat}
\ghat_{ij} = \delta_{ij} + k_i k_j.\end{gather}
Let $\w^i$ be the dual $1$-forms to the $e_i$, let $\w^i_j$ be the connection forms for the metric $g_0$ on $\Sigma_0$, and let $\varphi^i_j$ denote the connection forms for the metric $\ghat$ with respect to the \emph{same} coframe. Differentiating~\eqref{jihat} yields that
\begin{gather} \label{changeconn}
\varphi^i_j - \w^i_j = \big(\ghat^{-1}\big)^{i\ell } k_\ell k_{jm} \w^m.
\end{gather}
Letting $\hatnabla$ denote the covariant derivative with respect to $\ghat$, we~can compute that relative to the coframe $\w^1$, $\w^2$, we~have
\begin{gather*}
\big(\hatnabla^2 k\big)_{ij} =\dfrac{1}{\det\ghat}k_{ij}.
\end{gather*}
It follows that equation~\eqref{krone} is equivalent to $\Delta_{\ghat} k = 0$. On the other hand, equation~\eqref{krtwo} says that the trace with respect to $\ghat$ of the second fundamental form of~$\Sigma_0$ vanishes. That is, the projection $\pi\colon \Sigma \to \Sigma_0$ is harmonic.

In summary, the equations~\eqref{krone},~\eqref{krtwo} hold if and only if the coordinate functions on $\Sigma$ are harmonic (relative to $\ghat$),
which in turn is equivalent to $\Sigma$ being minimal.
\end{proof}

We now geometrically interpret the remaining equation~\eqref{krell}. Recall that the $1$-form $\lambda = \lambda_i \wt^i$ is closed. If we introduce a local potential function $\ell$ on $\Sigma_0$ such that ${\rm d}\ell = \lambda$, then using~\eqref{changeconn} one computes that
\begin{gather*}
\big(\hatnabla^2 \ell\big)_{ij} = \lambda_{ij} -\big(\ghat^{\,-1}\big)^{lm} \lambda_l k_m k_{ij}.
\end{gather*}
In particular, assuming that $k$ satisfies~\eqref{krone}, then equation~\eqref{krell} is equivalent to
\begin{gather*} %\label{lgcond}
\Delta_{\ghat} \ell = - \big|\hatnabla k\big|^2_{\ghat} \tan \phi.
\end{gather*}
Below, we~will also express this condition in terms of the codifferential of $\lambda$.

Gathering together all our conclusions in this section, we~have established the following result. Here we drop the hats and just use the metric on the graph of~$k$, referring to the graph of~$k$ as~$\Sigma$ and its induced metric from~$\R^{n+1}$ as~$g$.
\begin{Theorem} \label{cylinderthm}
Assume that $\big(M^3,\mu\big)$ is a twisted-austere pair, and that $M \subset \R^{n+1}$ is ruled by parallel lines. Then $M$ is the union of lines passing through a minimal surface $\Sigma \subset \R^{n+1}$. Moreover, if we choose Euclidean coordinates $x^0, x^1, \dots, x^n$ such that the rulings point in the $x^0$-direction, then
\begin{gather}
\mu = \pi^*\lambda \!+\! \sec\phi \, {\rm d} ( u(\pi^* k)) \!-\! \tan\phi \, u {\rm d}u
 = \pi^*\lambda \!+\! \sec\phi \, ( (\pi^* k) {\rm d}u \!+\! u {\rm d} (\pi^* k)) \!-\! \tan\phi \, u {\rm d}u,
\label{alphaform}
\end{gather}
where $u$ is the restriction of the $x^0$ coordinate to $M$, $k$ is the restriction of $x^0$ to $\Sigma$,
$\pi\colon M \to \Sigma$ is the projection along the rulings, and $\lambda$ is a closed $1$-form on $\Sigma$ satisfying
\begin{gather} \label{alphaq}
*{\rm d}\! *\!\lambda = |\nabla k|^2 \tan\phi,
\end{gather}
where the Hodge star and norms used are with respect to the metric on $\Sigma$. Conversely, given a~minimal surface $\Sigma \subset \R^{n+1}$ which is everywhere transverse to a fixed coordinate direc\-tion~$\di/\di x^0$, and a $1$-form $\lambda$ satisfying~\eqref{alphaq} for $k$ being the restriction of $x^0$ to $\Sigma$, then the union of lines through $\Sigma$ parallel to this direction gives a~$3$-dimensional submanifold $M$ which forms a~twisted-austere pair with $\mu$ given by~\eqref{alphaform}.
\end{Theorem}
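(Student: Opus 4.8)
The plan is to collect together the computations already carried out in the section, and organize them into the stated equivalence. The first direction---from a twisted-austere pair $(M^3,\mu)$ with $M$ ruled by parallel lines---requires essentially no new work: Theorem~\ref{theoremone} and Proposition~\ref{thm1step3} guarantee that $M$ is ruled by lines (not a helicoid, since the helicoid is ruled by planes through a point, not by parallel lines) with $B_{00}=-\tan\phi$ along the rulings; the analysis following \eqref{demusea} shows that $\Sigma_0$ (the orthogonal cross-section) carries a function $k$ and a closed $1$-form $\lambda$, and the preceding Proposition identifies \eqref{krone}--\eqref{krtwo} with the minimality of the graph $\Sigma = \{(p,k(p))\}$ in $\R^{n+1}$. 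So the first step is simply to assemble the formula \eqref{alphaform} for $\mu$ from the expressions $\mu_0 = k\sec\phi - u\tan\phi$ and $\mu_i = \lambda_i + uk_i\sec\phi$ in the frame, writing the result invariantly using $\pi$ and $\mathrm{d}u$, and to rewrite the remaining condition \eqref{krell} (equivalently $\Delta_{\ghat}\ell = -|\hatnabla k|^2_{\ghat}\tan\phi$) in the form \eqref{alphaq} using $*\mathrm{d}{*}\lambda = -\Delta_{\ghat}\ell$ for $\lambda = \mathrm{d}\ell$ locally.

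The second, converse direction is where a small amount of genuine argument is needed. Given a minimal surface $\Sigma\subset\R^{n+1}$ transverse to $\di/\di x^0$ and a closed $1$-form $\lambda$ on $\Sigma$ satisfying \eqref{alphaq}, I~would first recover $\Sigma_0$ as the image of $\Sigma$ under orthogonal projection to the hyperplane $x^0=0$; transversality ensures this is a diffeomorphism onto a surface in $\R^n$, and $k := x^0|_\Sigma$, pulled back to $\Sigma_0$, is then a smooth function whose graph is $\Sigma$. The minimality of $\Sigma$ translates, via the Proposition above, exactly into \eqref{krone} and \eqref{krtwo} for the data $(k, h^a_{ij})$ on $\Sigma_0$. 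Next, I~would \emph{define} $M$ as the union of lines through $\Sigma$ parallel to $\di/\di x^0$ and $\mu$ by the formula \eqref{alphaform}; one checks directly that $\mu$ is closed (each of the three terms is exact or the pullback of a closed form) and that, in the adapted frame built by parallel-translating a frame on $\Sigma_0$ along the rulings, the tensors $A^a$ and $B = \nabla\mu$ take precisely the forms \eqref{cylinderAform} and \eqref{cylinderBform}. Finally I~would verify the three nontrivial twisted-austere conditions: \eqref{condb3} holds because the $A^a$ are singular; \eqref{condc2}/\eqref{thm1step3} reduces to $B_{00}=-\tan\phi$, which is built in; and \eqref{expandet} together with \eqref{condc1} reduce---after substituting \eqref{cylinderAform}, \eqref{cylinderBform} and equating powers of $u$---to exactly \eqref{krell}, \eqref{krone}, \eqref{krtwo}, the first of which is \eqref{alphaq} and the latter two of which are the minimality of $\Sigma$. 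Invoking Theorem~\ref{theoremzero} then shows $N^*M + \mu$ is special Lagrangian, so $(M,\mu)$ is a twisted-austere pair.

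The main obstacle, such as it is, is bookkeeping rather than conceptual: one must be careful that \eqref{alphaq} is the \emph{global} reformulation of the \emph{local} condition \eqref{krell}, i.e.\ that $*\mathrm{d}{*}\lambda$ is well-defined globally on $\Sigma$ even though the potential $\ell$ with $\mathrm{d}\ell=\lambda$ exists only locally, and that $|\nabla k|^2$ is computed with respect to the metric $g$ on $\Sigma$ (equivalently $\ghat$ on $\Sigma_0$), not the flat metric $g_0$. One should also note that the condition \eqref{krtwo}, which appeared in the forward direction as a consequence of the twisted-austere equations on $M$, is subsumed in the converse direction into the single hypothesis that $\Sigma$ is minimal. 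With these caveats noted, the proof is a direct transcription of the calculations preceding the theorem, read in reverse, together with an appeal to Theorem~\ref{theoremzero}.
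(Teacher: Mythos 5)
Your proposal is correct and follows the same route as the paper: the theorem is exactly a repackaging of the computations carried out in Section~\ref{cylindersec} (passing to $\Sigma_0$, reading $B_{00}=-\tan\phi$ from Proposition~\ref{thm1step3} since the cylinder hypothesis forces $|\II|\subset\W_1$, extracting $k$ and $\lambda$ from the $u$-dependence of $\mu$, and identifying~\eqref{krone}--\eqref{krtwo} with minimality of the graph $\Sigma$ and~\eqref{krell} with~\eqref{alphaq}), and your converse is precisely these steps read in reverse together with Theorem~\ref{theoremzero}. Your caveats about the global meaning of~\eqref{alphaq} and about which metric the norm uses are the right things to be careful about and are implicit in the paper's ``drop the hats'' remark.
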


\section{Examples with austere bases} \label{austeresec}

{\sloppy
In this section we will determine all examples of twisted-austere pairs $\big(M^3, \mu\big)$ where the ``base''~$M$ is \emph{austere} but is not totally geodesic, nor a generalized helicoid. By Proposition~\ref{ruledprop} and our assumption that $M$ is not a generalized helicoid, we~know that $M$ is ruled by lines. As in the previous section we let $\R^{n+1}$ be the ambient Euclidean space, and we number the orthonormal frame vectors as $(\ve_0, \ve_1, \ve_2, \ve_3, \dots \ve_n)$, where $\ve_0$, $\ve_1$, $\ve_2$ are tangent to $M$ with $\ve_0$ pointing along the rulings.

}

The fact that $M$ is ruled also follows from Bryant's classification of austere 3-folds in Euclidean space~\cite{Br}, which asserts that $M$ is either a product of a minimal surface in $\R^n$ with a line, or~a~(possibly twisted) cone over a minimal surface in the $S^n$. (The twisted cone construction will be reviewed below.)

For 3-dimensional submanifolds, the austere condition amounts to minimality and \mbox{$\det A^\nu =0$} for all normal directions~$\nu$. By Proposition~\ref{thm1step1} the twisted-austere conditions imply the determinant condition. We~will now see how the twisted-austere conditions simplify in the presence of~the minimality condition. With respect to the moving frame (adapted as described at the start of this section), the matrices representing $\nabla \mu$ and the second fundamental form in the direction of $\ve_r$ (for $3 \le r \le n$) look like
\begin{gather} \label{austereABform}
B=\begin{pmatrix} -\tan \phi & B_{10} & B_{20} \\ B_{10} & B_{11} & B_{12} \\ B_{20} & B_{12} & B_{22} \end{pmatrix} , \qquad
A^r = \begin{pmatrix} 0 & 0 & 0 \\ 0 & A^r_{11} & A^r_{12} \\ 0 & A^r_{12} & -A^r_{11} \end{pmatrix} ,
\end{gather}
respectively. (Note that we have incorporated the minimality condition.) Recall from the proof of Proposition~\ref{thm1step3} that the twisted-austere condition~\eqref{condc2} is satisfied by the constant value of the top-left entry of $B$. In terms of these matrix entries, it is straightforward to compute that the two remaining conditions~\eqref{expandet} and~\eqref{condc1} are equivalent respectively to the pair of equations
\begin{gather} \label{C0expanded}B_{11}+B_{22} + \sin\phi \cos\phi \big(B_{10}^2 + B_{20}^2\big) + \cos^2\phi \big( B_{20}^2 B_{11} -2B_{10}B_{20} B_{12} + B_{10}^2 B_{22}\big)=0
\end{gather}
and
\begin{gather} \label{C1minimal}
\big(B_{20}^2 -B_{10}^2\big) A^r_{11} -2B_{10}B_{20} A^r_{12} =0.
\end{gather}

If $B_{10}=B_{20}=0$, then conditions~\eqref{C0expanded},~\eqref{C1minimal} greatly simplify: the first becomes $B_{11}+B_{22}=0$ and the second condition becomes vacuous. In this case, the tensor $B=\nabla \mu$ splits as \mbox{$B = - \tan \phi \big(\omega^0\big)^2 + B_{ij} \omega^i \omega^j$}, which is the sum of a constant multiple of the square of the arclength element along the ruling plus a quadratic form which restricts to be zero along the rulings. We~will refer to this as the \emph{split case}, and the case where one of $B_{10}$, $B_{20}$ is always nonzero as the \emph{non-split case}.

\subsection{The split case} \label{splitsec}

We begin by defining an exterior differential system whose integral submanifolds correspond to the adapted frame described above. In what follows, we~will use index ranges $0\le a,b,c,e \le 2$, $1\le i,j,k\le 2$ and $3 \le r,s \le n$.

To an adapted frame $f$ along $M$ we can associate a submanifold of the orthonormal frame bundle $\F$ of $\R^{n+1}$ by simply taking the image of $f\colon M \to \F|M$. However, if we want to~cha\-rac\-te\-rize submanifolds satisfying the austere conditions, we~must introduce the components $A^r_{ab}$ of the second fundamental form as extra variables, and take the image of $(f,A)$ which is a submanifold of $\F \times \calT_1$, where $\calT_1 = \calS_3 \otimes \R^{n-2}$ is the space of $\R^{n-2}$-valued symmetric bilinear forms on $\R^3$. For example, if we were investigating submanifolds $M^3$ whose second fundamental form satisfies certain algebraic conditions that defined a smooth subvariety $N \subset \calT_1$, then on~$\F\times N$ we would define $1$-forms
\begin{gather*}
\w^r_a - A^r_{ab} \w^b
\end{gather*}
(where the components $A^r_{ab}$ are taken as coordinate functions on $\calT_1$) which, due to equation~\eqref{waiA}, would pull back to be zero on the image of $(f,A)$ when $M$ satisfies the conditions.

In our situation we want to impose conditions which also involve $\mu$, so we need to introduce the components of $\mu$ and $\nabla \mu$ as additional variables. Accordingly, let $\calT_2 = \calT_1 \times \R^3 \times \calS_3$ denote the space where the tensor components $(A^r_{ab}, \mu_a, B_{ab})$ take values, and let $N \subset \calT_2$ be the affine subspace defined by
\begin{gather} \label{splitedseqns}
B_{11}+B_{22}=0, \qquad
B_{00}=-\tan\phi, \qquad
B_{0i}=0,\qquad
A^r_{0a}=0, \qquad
A^r_{11}+A^r_{22}=0.
\end{gather}
(Thus, $N$ has dimension $5+2(n-2)$.) On $\F \times N$ define $1$-forms $\vbeta$, $\vtheta$, $\vOmega$
(taking value in $\R^3$, $\R^{n-2}$, and the space of $(n-2)\times 3$ matrices, respectively) as follows:
\begin{subequations} \label{bigsysdef}
\begin{gather}
\beta_a := -{\rm d}\mu_a + \mu_b \w^b_a + B_{ab} \w^b,
\label{betadef} \\
\vtheta := \big(\w^3, \dots, \w^n\big)^{\rm T}, \label{vthetadef}
\\
\Omega^r_a := -\w^r_a + A^r_{ab} \w^b. \label{Omegaradef}
\end{gather}
\end{subequations}
Then if $\big(M^3,\mu\big)$ is a twisted-austere pair where the base is austere and of split type, the image of~$(f,A,\mu,B)$ is an integral submanifold of the Pfaffian system $\calI$ generated by $\vbeta$, $\vtheta$, $\vOmega$. Because this integral submanifold lies over $M \subset \R^{n+1}$, it satisfies the independence condition $\w^0 \wedge \w^1 \wedge \w^2\ne 0$, and we will refer to integral submanifolds satisfying this condition as {\em admissible}. Conversely, any admissible integral submanifold of $\calI$ is generated by a moving frame along an austere $M^3 \subset \R^{n+1}$ such that $(M,\mu)$ is a twisted-austere pair of split type.

\begin{Lemma} \label{pqprolong}
On any admissible integral submanifold $\Mhat^3$ of $\calI$, there are functions $p$, $q$ such that
\begin{gather} \label{austpq}
\begin{aligned}
\w^1_0 &= p \w^1 + q \w^2,
\\
\w^2_0 &= -q \w^1 + p \w^2.
\end{aligned}
\end{gather}
Moreover, the corresponding submanifold $M \subset \R^{n+1}$ is one of the following three possibilities:
\begin{enumerate}[$(i)$]\itemsep=0pt
\item a product of a line with a surface in $\R^n$ when $p=q=0$,
\item a cone over a minimal surface in $S^n$ when $q=0$ but $p\ne0$, or
\item a twisted cone when $q\ne 0$.
\end{enumerate}
Moreover, in case (iii), integral submanifolds only exist if $\sin\phi =0$.
\end{Lemma}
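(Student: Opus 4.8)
The plan is to extract the constraint from the integrability condition $\mathrm{d}^2\mu_0 = 0$ on an admissible integral submanifold $\Mhat^3$, using the normal form \eqref{austpq} for $\w^1_0$, $\w^2_0$ already established in the first part of the lemma. On $\Mhat^3$ the form $\beta_0$ of \eqref{betadef} vanishes, so with $B_{00} = -\tan\phi$, $B_{0i} = 0$ (from \eqref{splitedseqns}) and $\w^0_0 = 0$ this reads
\begin{gather*}
\mathrm{d}\mu_0 = \mu_1\w^1_0 + \mu_2\w^2_0 - \tan\phi\,\w^0 .
\end{gather*}
First I would differentiate this identity, which must hold on all of $\Mhat^3$.

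To expand $\mathrm{d}^2\mu_0$ one needs the exterior derivatives of the connection forms on the right. From the structure equations \eqref{streq} together with $\w^a_0 = 0$ on $\Mhat^3$ (which follows from \eqref{Omegaradef} and the relation $A^r_{0a}=0$ defining $N$) one gets $\mathrm{d}\w^1_0 = \w^2_0\wedge\w^1_2$, $\mathrm{d}\w^2_0 = -\w^1_0\wedge\w^1_2$, and $\mathrm{d}\w^0 = \w^1_0\wedge\w^1 + \w^2_0\wedge\w^2 = -2q\,\w^1\wedge\w^2$, the last equality using \eqref{austpq}. The derivatives $\mathrm{d}\mu_1$ and $\mathrm{d}\mu_2$ I would substitute from $\beta_1 = \beta_2 = 0$, invoking $B_{11}+B_{22}=0$ and $B_{0i}=0$; after this, the only curvature form appearing anywhere is $\w^1_2$.

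Substituting everything into $0 = \mathrm{d}^2\mu_0$, the terms proportional to $\mu_1$ and $\mu_2$ cancel in pairs, as do all terms containing $\w^1_2$; and — this is the crux — the terms containing $B_{11}$ and $B_{12}$ also cancel, because the coefficient matrix $\left(\begin{smallmatrix} p & q \\ -q & p\end{smallmatrix}\right)$ of \eqref{austpq} is a scalar multiple of a rotation, so it annihilates the relevant antisymmetric combination of wedge products. What survives is $2q\tan\phi\,\w^1\wedge\w^2 = 0$, hence $q\tan\phi \equiv 0$ on $\Mhat^3$. In case (iii), $q$ is by definition nowhere zero, so $\tan\phi = 0$ and therefore $\sin\phi = 0$, as claimed. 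The only real work here is the careful bookkeeping of signs in expanding $\mathrm{d}^2\mu_0$; I expect no conceptual obstacle. The point worth flagging is that, once minimality and the split conditions are imposed, the only part of $\nabla\mu$ that couples to the twisting $q$ of the ruling field is the constant entry $B_{00} = -\tan\phi$, and it is exactly this coupling that forces $\sin\phi = 0$.
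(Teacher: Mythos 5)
Your computation for the final assertion is correct, and it is essentially the same argument the paper uses: on $\Mhat^3$, since $\beta_0 \equiv 0$, the vanishing of $\mathrm{d}\beta_0$ is exactly the integrability condition $\mathrm{d}^2\mu_0 = 0$ that you expand, and both routes land on $2q\tan\phi\,\w^1\wedge\w^2 = 0$ (the paper states the opposite sign, but the conclusion $q\tan\phi = 0$ is the same; combined with $\cos\phi\ne0$ from Proposition~3.2 this forces $\sin\phi=0$ when $q\ne0$). Your observation that the $\mu_i$ and $\w^1_2$ terms cancel in pairs and that $B_{11}+B_{22}=0$ kills the remaining $B_{ij}$ contributions is right, and matches the paper's calculation $\tan\phi\,\w^0_i\wedge\w^i - B_{ij}\w^i_0\wedge\w^j$.

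However, you have proved only the last sentence of the lemma, while treating the normal form \eqref{austpq} and the trichotomy of cases as ``already established.'' They are not given data; they are the larger part of what the lemma asserts, and they do not follow for free. Establishing~\eqref{austpq} requires first computing $\mathrm{d}\Omega^r_0 \equiv -A^r_{ij}\w^i_0\wedge\w^j$ mod $\calI_1$ and invoking the Cartan Lemma to get $\w^i_0 = P^i_j\w^j$, and then extracting the further integrability condition $A^r_{ik}P^k_j = A^r_{jk}P^k_i$ from the vanishing of this $2$-form; reducing $P$ to the conformal form $\left(\begin{smallmatrix} p & q \\ -q & p\end{smallmatrix}\right)$ then needs a case split on $\dim|\II|$ (when $|\II|$ is $1$-dimensional one must use the additional relation $\mathrm{d}\Omega^3_1\wedge\w^2 - \mathrm{d}\Omega^3_2\wedge\w^1 \equiv 0$). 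The characterization of cases (i)--(iii) likewise requires computing $\mathrm{d}\ve_0 \equiv p\,\mathrm{d}\vx + q(\ve_1\w^2 - \ve_2\w^1)$ mod $\calI,\w^0$ and, for the $q\ne0$ branch, an appeal to Bryant's classification of austere $3$-folds. As written, your proposal is correct but substantially incomplete: it resolves the easiest part of the lemma and omits the step where the structure functions $p$, $q$ are actually produced and interpreted.
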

\begin{proof}
The system $\calI$ is algebraically generated by the component $1$-forms of $\vbeta$, $\vtheta$, $\vOmega$ and their exterior derivatives, and for $\Mhat$ to be an integral submanifold it is necessary and sufficient that this finite list of $1$-forms and 2-forms pull back to be zero on $\Mhat$. Moreover, in computing the generator 2-forms, any terms which are wedge products with the generator $1$-forms may be omitted. (This is known as computing `modulo the system $1$-forms', denoted by $\calI_1$.) For~example, ${\rm d}\w^r = -\Omega^r_a \wedge \w^a \equiv 0$ modulo $\calI_1$, and hence the exterior derivative of the components of~$\vtheta$ do not contribute any additional generator 2-forms to $\I$. In the same way, using~\eqref{streq},~\eqref{vthetadef}, and~\eqref{Omegaradef} we compute
\begin{align*}
{\rm d} \Omega^r_a & = - {\rm d} \omega^r_a + {\rm d} A^r_{ab} \wedge \omega^b + A^r_{ab} {\rm d} \omega^b
\\
& = \omega^r_c \wedge \omega^c_a + \omega^r_s \wedge \omega^s_a + {\rm d} A^r_{ab} \wedge \omega^b - A^r_{ab} \big(\omega^b_c \wedge \omega^c + \omega^b_s \wedge \omega^s\big)
\\
& \equiv A^r_{cb} \omega^b \wedge \omega^c_a + \omega^r_s \wedge A^s_{ac} \omega^c + {\rm d} A^r_{ab} \wedge \omega^b - A^r_{ab} \omega^b_c \wedge \omega^c \mod \calI_1
\\
& \equiv \big( {\rm d} A^r_{ab} -A^r_{ac} \w^c_b -A^r_{cb} \w^c_a + A^s_{ab} \w^r_s\big) \wedge \w^b \mod \calI_1.
\end{align*}
In particular, noting the zero entries in $A^r$ from~\eqref{splitedseqns} gives ${\rm d} \Omega^r_0 \equiv -A^r_{ij} \w^i_0 \wedge \w^j$. Because $A^r$ has rank two for at least one $r$, it follows from the Cartan Lemma that on any admissible integral submanifold we have $\w^i_0= P^i_j \w^j$ for some functions $P^i_j$. Substituting this into the expression for~${\rm d} \Omega^r_0$, we~then find that in order for the 2-form ${\rm d} \Omega^r_0$ to vanish along $\Mhat$, we~must have
\begin{gather} \label{APcond}
A^r_{ik} P^k_j = A^r_{jk} P^k_i
\end{gather}
for all $i$, $j$. In other words, if we think of the $P^i_j$ as entries in a $2\times 2$ matrix, then $AP$ must be symmetric whenever $A$ is the lower-right block of a matrix~$A^r$.

Suppose first that $|\II|$ is $2$-dimensional on an open subset of $M$. Then the span of the lower-right blocks of the $A^r$ includes $\left(\begin{smallmatrix} 1 & 0 \\ 0 & -1 \end{smallmatrix}\right)$ and $\left(\begin{smallmatrix} 0 & 1 \\ 1 & 0 \end{smallmatrix}\right)$, and substituting these into~\eqref{APcond} shows that $P$ must have the form $P = \left(\begin{smallmatrix} p & q \\ -q & p \end{smallmatrix}\right)$ for some functions $p$, $q$, as claimed.

On the other hand, suppose that $|\II|$ is $1$-dimensional on $M$. Then we may adapt the frame so that, say $A^3 = \lambda \left(\begin{smallmatrix} 0 & 0 & 0 \\
0 & 0 & 1 \\ 0 & 1 & 0 \end{smallmatrix}\right)$ for some $\lambda\ne0$, and $A^t=0$ for $t>3$. In this case, one can compute that
\begin{gather*}
{\rm d} \Omega^3_1 \wedge \w^2 - {\rm d} \Omega^3_2 \wedge \w^1 \equiv \lambda \big(\w^2_0 \wedge \w^2 - \w^1_0 \wedge \w^1\big) \wedge \w^0
\end{gather*}
modulo the 1-forms in $\calI$. The left hand side of this expression is in $\calI$, and thus the right hand side must vanish when pulled back to $\widehat{M}$. Substituting $\w^i_0= P^i_j \w^j$ into the right hand side gives $P^1_2 = - P^2_1$. Using~\eqref{APcond} shows that $P_1^1 = P_2^2$. Thus $P$ indeed has the desired form.

From~\eqref{dees}, and using~\eqref{Omegaradef} and~\eqref{splitedseqns}, the differential of the unit vector parallel to the ruling on $M$ is
\begin{align*}
{\rm d}\ve_0 &\equiv \ve_1\w^1_0 + \ve_2 \w^2_0 \mod \calI, \\
&= p \big(\ve_1 \w^1 + \ve_2 \w^2\big) + q \big(\ve_1 \w^2-\ve_2 \w^1\big) \quad \text{using~\eqref{austpq}.}
\end{align*}
Comparing this with the differential of the position vector on $M$, given by~\eqref{deex}, we~see that
\begin{gather*}
{\rm d}\ve_0 - p\, {\rm d}\vx \equiv q \big(\ve_1 \w^2-\ve_2 \w^1\big) \mod \calI, \w^0.
\end{gather*}
In other words, if we follow a curve on $M$ orthogonal to the rulings, the differential of the ruling direction $\ve_0$ is proportional to the differential of the position
if and only if $q$ is identically zero, and the ruling direction is constant if and only if $p$ and $q$ are both identically zero. Hence, $M$~is a cylinder iff $p=q=0$ and is a cone iff $q=0$ and $p\ne 0$. That the remaining case, where $q$ is nonvanishing, corresponds to $M$ being a twisted cone follows from Bryant's classification~\cite{Br}.

Additional generator 2-forms for $\calI$ are obtained by differentiating~\eqref{betadef} and using the three equations in~\eqref{bigsysdef}. One computes that
\begin{gather} \label{dbetacomp}
{\rm d}\beta_a \equiv \big({\rm d}B_{ab} -B_{ac} \w^c_b - B_{cb} \w^c_a\big) \wedge \w^b + \mu_b A^r_{bc} A^r_{ae} \w^c \wedge \w^e \quad \mod \calI_1.
\end{gather}
In particular, from~\eqref{austereABform} we obtain that
\begin{gather*}
{\rm d}\beta_0 \equiv \tan\phi \,\w^0_i \wedge \w^i -B_{ij} \w^i_0 \wedge \w^j.
\end{gather*}
Substituting~\eqref{austpq} into the above shows that this 2-form equals $-2q \tan \phi \, \w^1 \wedge \w^2$. Thus, admissible integral manifolds with $q\ne 0$ exist only if $\tan\phi=0$.
\end{proof}

We now consider the three sub-cases given by Lemma~\ref{pqprolong}. In what follows, we~will let $m = \ve_0 \intprod \mu = \mu_0$ denote the slope, so that
\begin{gather*}
\mu = m \w^0 + \mu_i \w^i.
\end{gather*}
Note that from the form of $\beta_0$ in~\eqref{betadef} we have
\begin{gather*}
{\rm d}m \equiv \mu_i \w^i_0 - \tan \phi \w^0 + B_{0i} \w^i \quad \mod \I_1.
\end{gather*}

\subsubsection[M is a cylinder]
{$\boldsymbol M$ is a cylinder} \label{splitcylindersec}

In this case, we~may write $M = \Sigma \times \R$ where $\Sigma$ is a minimal surface in $\R^n$. We~let $t$ be the coordinate on the $\R$-factor, and write ${\rm d}t$ to denote the pullback to $M$ of its differential, which coincides with the dual $\w^0$ of the frame vector $\ve_0$. Since the $\w^i_0=0$ on $\Mhat$, equation~\eqref{betadef} gives $\beta_0 = -({\rm d}m + \tan \phi\, {\rm d}t)$, and the vanishing of this $1$-form implies that $m+t \tan \phi $ is constant. Furthermore, wedging~\eqref{betadef} with $\w^a$ and summing over $a$ gives ${\rm d}\big(\mu_1 \w^1 + \mu_2 \w^2\big) \equiv 0$ modulo~$\beta_1$,~$\beta_2$, which shows that $\mudown = \mu - m\, {\rm d}t$ is a well-defined closed $1$-form on $\Sigma$, and it is easy to check that~$\mudown$ is harmonic. The~converse also holds:

\begin{Theorem} Let $\Sigma$ is an arbitrary minimal surface in $\R^n$, $\mudown$ a harmonic $1$-form on $\Sigma$ and let~$m$ be a linear function with derivative $-\tan\phi$. Then the cylinder $M = \Sigma \times \R \subset \R^{n+1}$, together with $\mu = \mudown + m(t) \,dt$ where $t$ is the coordinate on the second factor, forms a twisted-austere pair.
\end{Theorem}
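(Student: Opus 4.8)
The plan is to prove the converse by verifying directly that $(M,\mu)$ meets all the twisted-austere conditions of Theorem~\ref{theoremzero} with $k=3$ — i.e.\ that the necessary conditions derived in \S\ref{splitcylindersec} are also sufficient — reading the relevant tensors off the product structure $M=\Sigma\times\R\subset\R^n\times\R=\R^{n+1}$ and then quoting the simplifications already obtained in this section. First I would fix an adapted orthonormal frame with $\ve_0=\di/\di t$ along the $\R$-factor, $\ve_1$, $\ve_2$ tangent to $\Sigma$, and $\ve_3,\dots,\ve_n$ spanning the normal bundle of $M$ (which for a cylinder coincides with the normal bundle of $\Sigma$ inside $\R^n$). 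Since the ruling direction is a straight line it contributes nothing to the second fundamental form, so each $A^r$ has the block form $\left(\begin{smallmatrix}0&0\\0&h^r\end{smallmatrix}\right)$ with $h^r$ the second fundamental form of $\Sigma$; minimality of $\Sigma$ gives $\tr h^r=0$, so $M$ is minimal and every $A^r$ is singular, whence $\det A^r=0$ and the highest-order condition~\eqref{condb3} holds automatically.

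Next I would compute $B=\nabla\mu$. Because $M$ is a Riemannian product with flat $\R$-factor, because $m$ is a function of $t$ alone, and because the pullback of $\mudown$ annihilates $\ve_0$, the split Levi-Civita connection gives $B_{00}=\ve_0(\mu(\ve_0))=m'(t)=-\tan\phi$, $B_{0i}=B_{i0}=0$, and $B_{ij}=(\nabla^\Sigma\mudown)_{ij}$, whose trace vanishes because $\mudown$ is coclosed. Also ${\rm d}\mu={\rm d}\mudown+{\rm d}(m(t)\,{\rm d}t)=0$ since $\mudown$ is closed, so $B$ is symmetric. Hence $B$ is exactly of the split form analyzed above, with $B_{10}=B_{20}=0$, $B_{00}=-\tan\phi$, and $B_{11}+B_{22}=0$.

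With these facts the remaining twisted-austere conditions follow: \eqref{condc2} holds because $B_{00}=-\tan\phi$ (as in the proof of Proposition~\ref{thm1step3}), \eqref{C0expanded} collapses to $B_{11}+B_{22}=0$, and \eqref{C1minimal} is vacuous; since~\eqref{C0expanded} and~\eqref{C1minimal} are equivalent to~\eqref{condb0} and~\eqref{condc1}, all of~\eqref{condb0}--\eqref{condb3} hold, so $(M,\mu)$ is a twisted-austere pair. (One can also see the special Lagrangian conditions straight from the block structure: $C=I+\ri B$ is block-diagonal, $\det C=(1-\ri\tan\phi)\det(I_2+\ri\mudown)$ with $\det(I_2+\ri\mudown)=1-\det\mudown$ real since $\mudown$ is traceless, while ${\rm e}^{\ri\phi}(1-\ri\tan\phi)=\sec\phi$, so ${\rm e}^{\ri\phi}\det C\in\R$; and $A^rC^{-1}$ has eigenvalue $0$ together with those of $h^r(I_2+\ri\mudown)^{-1}$, forcing $\sigma_3=0$ and $\sigma_2=\det h^r/(1-\det\mudown)\in\R$, while $\mudown$ traceless symmetric makes $\mudown^2$ scalar, so by~\eqref{eq:tw3-special1} $\Re\sigma_1(A^rC^{-1})$ reduces to a multiple of $\tr h^r=0$.) I expect no serious obstacle here; the only point where the hypotheses genuinely interact is the phase bookkeeping — one needs $m'(t)=-\tan\phi$ precisely so that ${\rm e}^{\ri\phi}(1-\ri\tan\phi)$ is real, which is what makes the phase of $\det C$ come out right, and $\cos\phi\ne0$ (Proposition~\ref{thm1step1}) is what makes $\tan\phi$ meaningful — everything else being a routine unwinding of the product structure.
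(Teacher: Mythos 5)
Your proof is correct, but it takes a genuinely different route than the paper's. The paper's own proof of this statement is a one-liner: it simply observes that this is the special case of Theorem~\ref{cylinderthm} in which the height function $k$ (the restriction of $x^0$ to $\Sigma$) is constant, whence the minimal graph $\Sigma$ of Theorem~\ref{cylinderthm} lies in a hyperplane $\R^n\subset\R^{n+1}$, equation~\eqref{alphaq} reduces to $*\mathrm{d}*\lambda=0$ (so $\lambda$ is harmonic), and equation~\eqref{alphaform} reduces to $\mu=\pi^*\lambda+(k\sec\phi)\,\mathrm{d}u-\tan\phi\,u\,\mathrm{d}u$, which is exactly $\mudown+m(t)\,\mathrm{d}t$ with $m'=-\tan\phi$. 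You instead verify the twisted-austere conditions~\eqref{eq:tw1}--\eqref{eq:tw3} directly from the product structure, which is more self-contained: you read off the block forms of $A^r$ and $B$, note that $B$ is block diagonal with $B_{00}=-\tan\phi$ and a traceless $2\times 2$ block, and then check each $\sigma_j$ condition explicitly. Your parenthetical check is the cleanest part and is worth keeping as written: ${\rm e}^{{\rm i}\phi}(1-{\rm i}\tan\phi)=\sec\phi$ makes ${\rm e}^{{\rm i}\phi}\det C$ real, block-diagonality forces $\sigma_3(A^rC^{-1})=0$ and makes $\sigma_2(A^rC^{-1})$ real, and the Cayley--Hamilton identity for a traceless symmetric $2\times 2$ block makes $I+B^2$ block-scalar so that $\Re\sigma_1(A^rC^{-1})$ reduces to a positive multiple of $\tr h^r=0$. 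One small notational slip: where you write $\det(I_2+{\rm i}\mudown)$ and ``$\mudown^2$ scalar'' you really mean the $2\times 2$ block of $\nabla\mu$, i.e.\ $\nabla^\Sigma\mudown$, not the $1$-form $\mudown$ itself; the underlying computation is nonetheless right. The trade-off is clear: the paper's proof is shorter but depends on having already proved Theorem~\ref{cylinderthm}, while your direct verification is longer but stands alone and makes transparent exactly why the hypotheses (minimality, harmonicity, $m'=-\tan\phi$) are each needed.
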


\begin{proof} This is a special case of the construction in Theorem~\ref{cylinderthm} with $k$ constant.
\end{proof}

\subsubsection[M is a cone]
{$\boldsymbol M$ is a cone} \label{splitconesec}

In this case we assume that $q=0$ identically. We~will show in Remark~\ref{prmk} below that $p$ is nowhere zero, and thus up to a change of orientation we can assume that $p > 0$ everywhere. To analyze this case, we~first construct a partial prolongation of the system $\calI$, introducing the components of $\w^0_1$, $\w^0_2$ and ${\rm d}p$ as new variables.
To this end, let $p$ be a coordinate on the last factor in $\F\times N\times \R^+$, and on this space define $1$-forms
\begin{gather} \label{coneadd78}
\alpha_1 := \w^0_1 + p \w^1, \qquad \alpha_2 := \w^0_2 + p\w^2.
\end{gather}
One can compute that
\begin{gather*}
{\rm d}\alpha_1 \equiv \big({\rm d}p+p^2 \w^0\big)\wedge \w^1, \qquad
{\rm d}\alpha_2 \equiv \big({\rm d}p+p^2 \w^0\big)\wedge \w^2
\end{gather*}
modulo $\calI$, $\alpha_1$, $\alpha_2$, and thus
\begin{gather} \label{dpeq}
{\rm d}p = -p^2 \w^0
\end{gather}
on any admissible integral manifold.

\begin{Remark} \label{prmk}
It follows from~\eqref{dpeq} and the equations~\eqref{deex},~\eqref{dees} that the vector $\vx - (1/p) \ve_0$ is constant, giving the position of the vertex of the cone. Equation~\eqref{dpeq} also implies that $p$ is constant along surfaces orthogonal to the rulings, whereas along the rulings it behaves like solutions to the separable ODE ${\rm d}y/{\rm d}s = -y^2$, for which $1/y$ is a linear function of $s$. Therefore~$p$ cannot vanish, but it can blow up to infinity, which happens at the vertex of the cone.
\end{Remark}

Let $\alpha_3 := {\rm d}p+p^2 \w^0$ and let $\calI^+$ be the Pfaffian system generated by $\valpha=(\alpha_1,\alpha_2,\alpha_3)$, $\vbeta$, $\vtheta$ and $\vOmega$.

\begin{Lemma} \label{conelemma}
Admissible integral manifolds of $\calI^+$ exist only if $\tan\phi=0$.
\end{Lemma}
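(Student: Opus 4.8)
The plan is to continue the moving-frame bookkeeping started in the proof of Lemma~\ref{pqprolong}, now working on the prolonged system $\calI^+$ with the extra $1$-forms $\valpha=(\alpha_1,\alpha_2,\alpha_3)$. The key point is that imposing $q=0$, $p\ne 0$ (the cone case) and the relation~\eqref{dpeq} rigidifies the system enough that one of the remaining structure equations—specifically the one coming from $\beta_0$, or from $\mathrm{d}\alpha_3$—produces a $2$-form proportional to $\w^1\wedge\w^2$ with coefficient a nonzero multiple of $\sin\phi$ (equivalently $\tan\phi$, since $\cos\phi\ne 0$ by Proposition~\ref{thm1step1}). First I would recompute $\mathrm{d}\beta_0$ in the cone setting: from~\eqref{betadef} and~\eqref{austereABform} we had $\mathrm{d}\beta_0\equiv \tan\phi\,\w^0_i\wedge\w^i - B_{ij}\w^i_0\wedge\w^j$ modulo $\calI_1$, and now $\w^i_0 = p\,\w^i$ (the $q=0$ specialization of~\eqref{austpq}) while $\w^0_i = -\alpha_i + p\,\w^i$, so modulo $\calI^+$ the term $\w^0_i\wedge\w^i$ contributes $p\,\w^i\wedge\w^i=0$; hence the whole expression collapses and one learns nothing from $\beta_0$ alone. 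So the real input must come from differentiating $\beta_i$ or from the prolongation forms $\alpha_i$.

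The step I expect to carry the weight is the analysis of $\mathrm{d}\beta_i$ together with $\mathrm{d}\alpha_i$. Using~\eqref{dbetacomp} with $a=i$, the entries $B_{0j}=0$ from~\eqref{splitedseqns}, $B_{ij}$ traceless, and the cone relations $\w^j_0=p\,\w^j$, $\w^0_j=-\alpha_j+p\,\w^j$, $\mathrm{d}p=-p^2\w^0$, one extracts structure equations for $\mathrm{d}B_{ij}$ of the form $\mathrm{d}B_{ij}-B_{ik}\w^k_j-B_{kj}\w^k_i = (\text{stuff})\,\w^0 + (\text{stuff})\,\w^k$, and in particular a relation governing how $B_{ij}$ evolves along the ruling $\ve_0$. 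The point is that along the ruling the components $B_{ij}$ satisfy a linear first-order ODE forced by the terms $-B_{ik}\w^k_0 - B_{ki}\w^k_0 = -2p\,B_{ij}\w^0$ plus the curvature term $\mu_r A^r_{jc}A^r_{ie}\w^c\wedge\w^e$; combining this with the minimality/traceless conditions and the already-known value $B_{00}=-\tan\phi$, the remaining twisted-austere equation~\eqref{C0expanded} (which in the split case reduced to $B_{11}+B_{22}=0$) together with~\eqref{C1minimal} (vacuous in the split case) gets differentiated along the ruling and yields an obstruction. Concretely I anticipate that $\mathrm{d}(\text{tr-free part of }B)$ along $\ve_0$, reconciled with the second structure equation, produces $0 = (\text{nonzero})\cdot\tan\phi\cdot\w^1\wedge\w^2$ on an admissible integral manifold.

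Thus the proof will run: (1) on $\calI^+$, record the complete list of torsion terms using~\eqref{dbetacomp}, the cone specialization of~\eqref{austpq}, and~\eqref{dpeq}; (2) apply Cartan's Lemma where forced to introduce any further derived quantities; (3) locate the 2-form among $\mathrm{d}\beta_i$, $\mathrm{d}\Omega^r_i$ or $\mathrm{d}\alpha_i$ whose vanishing, after substituting everything known, reads $(\text{const}\ne 0)\,\tan\phi\,\w^1\wedge\w^2=0$; (4) conclude $\tan\phi=0$, i.e.\ $\sin\phi=0$ since $\cos\phi\ne0$. The main obstacle is step~(3): keeping track of which exterior derivative actually retains a nonzero $\w^1\wedge\w^2$ coefficient after all the cone relations are imposed—many candidate 2-forms collapse to zero identically (as we just saw for $\mathrm{d}\beta_0$), and one must be careful that the surviving obstruction is genuinely proportional to $\tan\phi$ and not identically satisfied. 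A secondary subtlety is ensuring the coefficient multiplying $\tan\phi$ is nonvanishing on an open set; here the rank-two hypothesis on some $A^r$ (guaranteed by Lemma~\ref{no-rank-one}) and the nonvanishing of $p$ (Remark~\ref{prmk}) should suffice, but it needs to be checked that $\mu_r A^r\ne 0$ or the relevant combination of $B_{ij}$ is not forced to vanish independently.
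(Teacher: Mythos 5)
Your plan is essentially the paper's: the obstruction indeed arises from $\mathrm{d}\beta_1$ and $\mathrm{d}\beta_2$ once the cone relations $\w^i_0 = p\,\w^i$, $\mathrm{d}p = -p^2 \w^0$, and the split constraint $B_{11}+B_{22}=0$ are substituted into~\eqref{dbetacomp}, and your observation that $\mathrm{d}\beta_0$ collapses when $q=0$ is correct. The one specific you leave open in step~(3) is the combination that exposes the torsion; it is the $3$-form $\mathrm{d}\beta_1\wedge\w^2-\mathrm{d}\beta_2\wedge\w^1$, in which the $\mathrm{d}B_{11}\wedge\w^1\wedge\w^2$ and $\mathrm{d}B_{22}\wedge\w^1\wedge\w^2$ terms cancel by the trace-free condition, the curvature term of~\eqref{dbetacomp} drops out because $A^r_{0a}=0$ makes it proportional to $\w^1\wedge\w^2$, and what survives is exactly $2p\tan\phi\,\w^0\wedge\w^1\wedge\w^2$ modulo $\calI^+_1$. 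Since $p\ne0$ by Remark~\ref{prmk}, this forces $\tan\phi=0$; in particular your secondary worry about needing $\mu_b A^r$ to be nonvanishing is unnecessary, as that term never survives the wedging.
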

\begin{proof}
Using~\eqref{dbetacomp} and the identity $B_{11} + B_{22} = 0$ which holds in the split case, a lengthy computation gives
\begin{gather*}
{\rm d}\beta_1 \wedge \w^2 - {\rm d}\beta_2 \wedge \w^1 \equiv 2p \tan\phi \, \w^0 \wedge \w^1 \wedge \w^2 \mod \calI^+_1. \tag*{\qed}
\end{gather*}
\renewcommand{\qed}{}
\end{proof}

Using $s=1/p$ to denote the function on $M$ giving the distance to the vertex of the cone, and recalling from Lemma~\ref{conelemma} that because we must have $\tan \phi = 0$,  one can compute using~\eqref{betadef} that
\begin{gather*}
\mu \equiv {\rm d}(ms) \quad \mod \calI^+.
\end{gather*}
Using~\eqref{betadef} and~\eqref{austpq}, we~obtain ${\rm d}m = \mu_0 \w^i_0 = p\big(\mu_1 \omega^1 + \mu_2 \omega^2\big)$ on solutions. In particular, ${\rm d}m$ has no $\w^0$ component, so the slope $m$ is constant along the rulings, and is thus a well-defined function on $\Sigma$.

\begin{Theorem}
The slope satisfies $\Delta m =-2m$, where $\Delta$ denotes the Laplacian on $\Sigma$. Conversely, if $\Sigma$ is an arbitrary minimal surface in $S^n$ and $m$ is a smooth function on $\Sigma$ satisfying $\Delta m = -2m$, then the cone over $\Sigma$ together with $\mu = {\rm d}(ms)$ is a twisted-austere pair for $\tan\phi=0$.
\end{Theorem}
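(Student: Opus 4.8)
The plan is to reduce the twisted-austere conditions, in the present setting of a cone over a minimal surface $\Sigma\subset S^n$ with $\mu = {\rm d}(ms)$, to the single requirement that $ms$ be harmonic on $M$, and then to evaluate that requirement with the cone Laplacian.

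For the forward implication, suppose $(M,\mu)$ is a twisted-austere pair of split type with $M$ a cone over $\Sigma$ (case (ii) of Lemma~\ref{pqprolong}); by Lemma~\ref{conelemma} we have $\tan\phi = 0$, and, as derived just before the statement, $\mu = {\rm d}(ms)$ with $m$ constant along the rulings, hence a function on $\Sigma$. In the split case $B = \nabla\mu$ satisfies $B_{00} = -\tan\phi = 0$ and $B_{10} = B_{20} = 0$, while the sole surviving twisted-austere condition is $B_{11} + B_{22} = 0$ (this is \eqref{C0expanded} evaluated at $B_{10} = B_{20} = 0$). Hence $\Delta_M(ms) = \tr_M\operatorname{Hess}_M(ms) = B_{00} + B_{11} + B_{22} = 0$. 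Writing the cone metric as the warped product $g_M = {\rm d}s^2 + s^2 g_\Sigma$, with $s = 1/p$ the distance to the vertex (Remark~\ref{prmk}), the cone Laplacian $\Delta_M = \partial_s^2 + \tfrac2s\partial_s + \tfrac1{s^2}\Delta_\Sigma$ gives $\Delta_M(ms) = \tfrac1s(\Delta_\Sigma m + 2m)$, so $\Delta m = -2m$.

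For the converse, given a minimal $\Sigma\subset S^n$ and $m$ with $\Delta m = -2m$, let $M$ be the cone over $\Sigma$ and put $\mu = {\rm d}(ms)$, with $m$ pulled back constant along the rays; then ${\rm d}\mu = 0$ automatically and the slope $\ve_0\intprod\mu$ equals $m$. The ruling direction lies in the flat factor of the cone, so $M$ is ruled by lines and its second fundamental form annihilates $\ve_0$; together with minimality of $M$ (cones over minimal submanifolds of $S^n$ are minimal) this gives the $A^r$ the shape \eqref{austereABform}, and in particular $\det A^r = 0$, $A^r_{0a} = 0$, $A^r_{11} + A^r_{22} = 0$. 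A short warped-product computation gives $B_{00} = 0$, $B_{0i} = 0$, and $B_{11} + B_{22} = \Delta_M(ms) = \tfrac1s(\Delta_\Sigma m + 2m) = 0$. Choosing the phase angle $\phi = 0$ (equivalently $\theta = (n-2)\tfrac\pi2$ from \eqref{angleeq} with ambient dimension $n+1$), one then checks the three conditions of Theorem~\ref{theoremzero}: \eqref{condb3} holds since $\det A^r = 0$; \eqref{condb0} (equivalently \eqref{C0expanded}) holds since $B_{11} + B_{22} = 0$ and $B_{10} = B_{20} = 0$; \eqref{condb1} (equivalently \eqref{C1minimal}) is vacuous since $B_{10} = B_{20} = 0$; and \eqref{condb2} (equivalently \eqref{condc2}) is satisfied by the constant value $B_{00} = -\tan\phi$, exactly as in the proof of Proposition~\ref{thm1step3}. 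Therefore $(M,\mu)$ is a twisted-austere pair with $\tan\phi = 0$.

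The computations here are all elementary; the point requiring care is the bookkeeping. In the converse one must confirm that \emph{every} condition of Theorem~\ref{theoremzero} holds, isolating $\Delta m = -2m$ as the only one with content, and in the Laplacian step one must keep track of the factor $1/s$ relating a $g_M$-orthonormal frame on the cone slices to a $g_\Sigma$-orthonormal frame on $\Sigma$, which is precisely what converts $\Delta_M(ms) = 0$ into $\Delta_\Sigma m = -2m$. Alternatively, the identity $\Delta m = -2m$ can be obtained intrinsically within the exterior differential system, by differentiating the relation ${\rm d}m = p(\mu_1\w^1 + \mu_2\w^2)$ obtained above and using ${\rm d}p = -p^2\w^0$ together with the structure equations and $\beta_i \equiv 0$; this is self-contained but computationally longer.
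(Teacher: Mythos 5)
Your proposal is correct, and the forward direction takes a genuinely different route from the paper's. The paper proves $\Delta m = -2m$ intrinsically within the exterior differential system: it restricts the moving frame to $\Sigma$ by setting $p=1$, writes $*\Delta m = {\rm d}*{\rm d}m = {\rm d}(\mu_1\w^2 - \mu_2\w^1)$, and expands using the structure equations and the congruences for ${\rm d}\mu_1$, ${\rm d}\mu_2$ to land on $(B_{11}+B_{22}-2m)\,\w^1\wedge\w^2$. You instead observe that the split twisted-austere conditions give $\Delta_M(ms) = \tr\operatorname{Hess}_M(ms) = B_{00}+B_{11}+B_{22} = 0$ immediately, and then invoke the warped-product Laplacian formula $\Delta_M = \partial_s^2 + \tfrac{2}{s}\partial_s + \tfrac{1}{s^2}\Delta_\Sigma$ for the cone metric to convert $\Delta_M(ms)=0$ into $\Delta_\Sigma m = -2m$. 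This is shorter and more conceptual, at the cost of appealing to the warped-product formula as an external fact rather than deriving it from the frame; the paper's computation, by contrast, stays entirely within the Pfaffian-system bookkeeping it has already set up. You correctly flag the paper's route as the ``intrinsic alternative'' at the end. The converse is essentially the paper's argument: both construct the adapted frame on the cone, compute that $B$ and the $A^r$ take the split form with $B_{11}+B_{22} = \tfrac{1}{s}(\Delta_\Sigma m + 2m)$, and conclude; you are merely more explicit in enumerating which of the conditions \eqref{condb0}--\eqref{condb3} are automatic and which reduce to $\Delta m = -2m$.
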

\begin{proof}
Modulo the $1$-forms of $\calI^+$ we can compute that
\begin{gather*}
{\rm d}m \equiv p\big(\mu_1 \w^1 + \mu_2 \w^2\big),
\\
{\rm d}\mu_1 -\mu_2 \w^2_1 \equiv (B_{11} -mp) \w^1 + B_{12} \w^2,
\\
{\rm d}\mu_2 +\mu_1 \w^2_1 \equiv B_{12} \w^1 + (B_{22} - mp) \w^2,
\end{gather*}
Setting $p=1$ to restrict to $\Sigma$, we~compute using the above and~\eqref{streq} that
\begin{gather*}
* \Delta m = {\rm d} * {\rm d} m = {\rm d} \big( \mu_1 \w^2 - \mu_2 \w^1 \big)
 = {\rm d} \mu_1 \wedge \w^2 + \mu_1 {\rm d} \w^2 - {\rm d}\mu_2 \wedge \w^1 - \mu_2 {\rm d}\w^1
 \\ \hphantom{* \Delta m }
 {}= \big( \mu_2 \w^2_1 + B_{11} \w^1 - m \w^1 + B_{12} \w^2\big) \wedge \w^2 - \mu_1 \w^2_i \wedge \w^i
 \\ \hphantom{* \Delta m= ( }
{} - \big( {-} \mu_1 \w^2_1 + B_{12} \w^1 + B_{22} \w^2 - m \w^2 \big) \wedge \w^1 + \mu_2 \w^1_i \wedge \w^i
\\ \hphantom{* \Delta m }
{} = ( B_{11} + B_{22} - 2 m) \w^1 \wedge \w^2.
\end{gather*}
Using $B_{11} + B_{22}=0$ from~\eqref{splitedseqns}, and taking Hodge star of the above, we~conclude that \mbox{$\Delta m = -2m$}.

Conversely, let $\Sigma \subset S^n$ be an arbitrary minimal surface which is carrying a moving frame $(\vy,\vv_1,\vv_2,\vv_3,\dots, \vv_n)$ where the unit vector $\vy$ represents position on the surface, $\vv_1$, $\vv_2$ are tangent to $\Sigma$, and the $\vv_r$ are tangent to $S^n$ but normal to the surface. To this moving frame we associate canonical forms $\eta^1$, $\eta^2$ and connection forms $\eta^1_2$ and $\eta^r_i$, such that
\begin{gather*}
{\rm d}\vy = \vv_i \eta^i, \qquad
 {\rm d}\vv_i = \vv_j \eta^j_i + \vv_r \eta^r_i
\end{gather*}
as $\R^{n+1}$-valued functions. Because $\Sigma$ is minimal, $\eta^r_i = H^r_{ij}\eta^j$ for some traceless $2\times 2$ matrices~$H^r$.

Define a mapping $\psi\colon \Sigma \times \R^+ \to \F$ (with $s$ as coordinate on the $\R^+$ factor) by
\begin{gather*}
\vx = s \vy,\qquad
\ve_0 = \vy, \qquad
\ve_1 = \vv_1, \qquad
\ve_2 = \vv_2, \qquad
\ve_r = \vv_r.
\end{gather*}
This gives a moving frame along the cone over $\Sigma$.

By differentiating $\vx$ and $\ve_0$, $\ve_1$, $\ve_2$, $\ve_r$ using~\eqref{deex} and~\eqref{dees}, we~compute that $\psi^* \w^i = s \eta^i$, $\psi^*\w^0 = {\rm d}s$, $\psi^* \w^i_0 = \eta^i$, $\psi^* \w^r_i = \eta^r_i$ and $\psi^* \w^r_0 = 0$. It~then follows that the components of the second fundamental form of the cone, relative to $\w^0$, $\w^1$, $\w^2$, are
\begin{gather*}
A^r = s^{-1}\begin{pmatrix} 0 & 0 \\ 0 & H^r \end{pmatrix} .
\end{gather*}
Let the components $m_i$, $m_{ij}$ of the covariant derivatives of $m$ on $\Sigma$ be defined by ${\rm d}m = m_i \eta^i$ and ${\rm d}m_i - m_j \eta^j_i = m_{ij} \eta^j$. Then taking $\mu = {\rm d}(ms)$ and using~\eqref{betadef} yields that the components of $\nabla\mu$ relative to $\w^0$, $\w^1$, $\w^2$ are
\begin{gather*}
B = s^{-1}\begin{pmatrix} 0 & 0 & 0 \\
0 & m_{11}+m & m_{12} \\ 0 & m_{12} & m_{22} + m \end{pmatrix} .
\end{gather*}
It is then easy to see that $\Delta m = - 2 m$ implies that~\eqref{C0expanded} is satisfied.
\end{proof}

\begin{Remark}
The minimality of $\Sigma \subset S^n$ is equivalent to its coordinates as a submanifold of $\R^{n+1}$ being eigenfunctions of $\Delta$ for eigenvalue $-2$. However, if we take $m$ to be one of these coordinate functions, the special Lagrangian submanifold in $T^* \R^{n+1}$ that results from the Borisenko construction is easily seen to be merely a translation of the conormal bundle of $M$.
\end{Remark}

\subsubsection[M is a twisted cone]
{$\boldsymbol M$ is a twisted cone} \label{twistconesec}

Recall from~\cite{Br} that the twisted cone over a minimal surface in the sphere is constructed as follows. Let $\vu\colon \Sigma \to S^n$ be a minimal immersion of a surface $\Sigma$, and let $f$ be a scalar function on $\Sigma$ satisfying $\Delta f = -2f$. (This, of course, is the same equation satisfied by the components of $\vu$ when $\vu$ is regarded as an $\R^{n+1}$-valued function.) Thus, the $\R^{n+1}$-valued $1$-form $\vbeta = \vu (*{\rm d}f) - f (*{\rm d}\vu)$ is closed, and the twisted cone is given by
\begin{gather} \label{twistyform}
\vX(s,t) = \vw(s) + t \vu(s), \qquad
s\in \Sigma, \quad t\in\R^+,
\end{gather}
where $\vw\colon \Sigma \to \R^{n+1}$ satisfies ${\rm d}\vw = \vbeta$. (It may be necessary to pass to the universal cover of $\Sigma$ for $\vw$ to be well-defined.)

Let $\calI$ be the Pfaffian system defined at the beginning of Section~\ref{splitsec} and let $\calI^+$ be a partial prolongation defined on $\F \times N \times \R^2$ by taking $p$, $q$ as coordinates on the last factor and adjoining $1$-forms
\begin{gather*}
\alpha_1 := \w^0_1 + p \w^1 + q\w^2 , \qquad \alpha_2 := \w^0_2 -q\w^1+ p\w^2.
\end{gather*}
These are analogous to the $1$-forms defined in~\eqref{coneadd78} but now we assume $q\ne 0$. Consequently, from Lemma~\ref{pqprolong}, it is necessary that $\tan\phi=0$. By differentiating the above expressions, we~can compute as in Section~\ref{splitconesec} that
\begin{gather} \label{twistedconetempeq}
{\rm d}p + \big(p^2 - q^2\big) \w^0 = h_1 \w^1 + h_2 \w^2, \qquad
{\rm d}q + 2pq \w^0 = h_2 \w^1 - h_1 \w^2,
\end{gather}
on any admissible integral manifold, for some undetermined functions $h_1$, $h_2$.

Let $\Mhat$ be an admissible integral manifold of $\calI^+$ for which $p$ and $q$ are both nonvanishing. If~its base $M$ is parametrized by an immersion~\eqref{twistyform}, and the form of the right-hand side indicates that the unit vector~$\vu$ must point along the rulings of $M$, and thus must coincide with vector $\ve_0$ of our adapted frame. In fact, one can check that if we choose $\vu= \ve_0$, $f=q/\big(q^2+p^2\big)$ and $t=p/\big(q^2+p^2\big)$, then there exists a $\vw$ such that $\vw + t \vu$ equals the position vector $\vx$ on $M$. Moreover, one can also compute explicitly using~\eqref{betadef} and~\eqref{twistedconetempeq} that, for these choices of $f$ and $t$, we~have
\begin{gather} \label{twistedconemuform}
\mu = m (*{\rm d}f) - f (*{\rm d}m) + {\rm d}(tm),
\end{gather}
where the slope $m$ is again an eigenfunction on $\Sigma$ satisfying $\Delta m = -2m$.

Conversely, we~have the following:
\begin{Theorem}
Let $\vu\colon\Sigma \to S^n$ be a minimal immersion and let $M$ be a twisted
cone over its image $\Sigma$, defined by data $(f,\vw)$. Let $m\colon\Sigma \to \R$ satisfy
$\Delta m =-2m$. Then $(M,\mu)$ is a~twi\-s\-ted-austere pair, where $\mu$ is given by~\eqref{twistedconemuform}, for $\tan\phi=0$.
\end{Theorem}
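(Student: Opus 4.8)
The plan is to argue exactly as in the converse halves of Theorem~\ref{cylinderthm} and of the cone theorem of Section~\ref{splitconesec}: construct an explicit adapted moving frame along $M$ from a minimal frame on $\vu(\Sigma)\subset S^n$, pull back the canonical and connection forms of the orthonormal frame bundle $\F$ of $\R^{n+1}$, read off the matrices $A^r$ and $B=\nabla\mu$ in this frame, and verify the twisted-austere conditions of Theorem~\ref{theoremzero}. Since we expect a twisted-cone base, the natural target is the split form of those conditions, namely equations~\eqref{C0expanded} and~\eqref{C1minimal} together with the structural identities displayed in~\eqref{austereABform}; and since a twisted cone is admissible only for $\tan\phi=0$, it is enough to check these with $\tan\phi=0$.

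First I would fix (passing to the universal cover of $\Sigma$ if necessary so that $\vw$ is globally defined) a local adapted frame $(\vu,\vv_1,\vv_2,\vv_3,\dots,\vv_n)$ of $\R^{n+1}$ along $\vu$, with $\vv_1,\vv_2$ tangent to $\Sigma$ and $\vv_r$ normal in $S^n$, so that ${\rm d}\vu=\vv_i\eta^i$ and ${\rm d}\vv_i=-\vu\,\eta^i+\vv_j\eta^j_i+\vv_r\eta^r_i$, with $\eta^r_i=H^r_{ij}\eta^j$ and $H^r$ symmetric and traceless by minimality. Define $\psi\colon\Sigma\times\R^+\to\F$ by $\vx=\vw+t\vu$, $\ve_0=\vu$, $\ve_i=\vv_i$, $\ve_r=\vv_r$. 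Differentiating $\vx$ and the $\ve$'s via~\eqref{deex},~\eqref{dees} and the identity ${\rm d}\vw=\vu\,(*{\rm d}f)-f\,(*{\rm d}\vu)$, one computes
$\psi^*\w^0={\rm d}t+(*{\rm d}f)$, $\psi^*\w^1=t\eta^1-f\eta^2$, $\psi^*\w^2=f\eta^1+t\eta^2$, $\psi^*\w^a=0$ for $a\ge3$, and $\psi^*\w^i_0=\eta^i$, $\psi^*\w^0_i=-\eta^i$, $\psi^*\w^j_i=\eta^j_i$, $\psi^*\w^r_i=\eta^r_i$, $\psi^*\w^r_0=0$. Thus $\psi$ is an adapted frame along $M$ with $\ve_0$ along the rulings, and the independence condition holds since $\w^0\wedge\w^1\wedge\w^2=(t^2+f^2)\,{\rm d}t\wedge\eta^1\wedge\eta^2$ and $t>0$.

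Next I would extract the tensors. Solving for $\eta^1,\eta^2$ in terms of $\w^1,\w^2$ introduces the matrix $R=tI+fK$, where $K=\left(\begin{smallmatrix}0&-1\\1&0\end{smallmatrix}\right)$; note $R$ is $\sqrt{t^2+f^2}$ times a rotation. Then~\eqref{waiA} gives $A^r_{0a}=0$ with the $\ve_1,\ve_2$-block of $A^r$ equal to $H^rR^{-1}$. Since right multiplication by $R^{-1}$ takes symmetric traceless $2\times2$ matrices to symmetric traceless $2\times2$ matrices, each $A^r$ has exactly the form in~\eqref{austereABform}; in particular $M$ is minimal and $\det A^r=0$ for every $r$, so condition~\eqref{condb3} is automatic and condition~\eqref{condc2} holds because $B_{00}$ is the constant $-\tan\phi$, precisely as in the proof of Proposition~\ref{thm1step3}. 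Writing $\mu$ from~\eqref{twistedconemuform} in this frame, a short computation gives $\mu_0=m$ (so $m$ is indeed the slope) and $\mu_i=m_i$; substituting these into~\eqref{demusea} yields $B_{0\beta}=0$ --- in particular $B_{00}=0=-\tan\phi$ and $B_{0i}=0$, placing us in the split case --- and the $\ve_1,\ve_2$-block of $B$ equal to $\tilde B R^{-1}$ with $\tilde B_{ij}=m_{ij}+m\,\delta_{ij}$.

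It remains to invoke the eigenfunction equations. Here $\Delta m=-2m$ says $\tr\tilde B=(m_{11}+m_{22})+2m=0$, and since $R^{-1}$ also preserves the traceless condition we get $\tr(\tilde B R^{-1})=\frac{t}{t^2+f^2}\tr\tilde B=0$, that is $B_{11}+B_{22}=0$; this is exactly~\eqref{C0expanded} once $\tan\phi=0$ and $B_{10}=B_{20}=0$. The same identity $\tr\tilde B=0$ also forces $\tilde B K+K\tilde B=0$, hence $\tilde B R^{-1}$ is symmetric, i.e.\ ${\rm d}\mu=0$; equivalently, ${\rm d}\mu={\rm d}\big(m\,(*{\rm d}f)-f\,(*{\rm d}m)\big)=(m\,\Delta f-f\,\Delta m)\,{\rm d}A=0$. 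And~\eqref{C1minimal} is vacuous since $B_{10}=B_{20}=0$. Thus all the conditions of Theorem~\ref{theoremzero} hold, and $(M,\mu)$ is a twisted-austere pair with $\tan\phi=0$. The only real work is the computation in the third paragraph, and the key point --- conceptual rather than technical --- is that the cross-sectional coframes of $\Sigma$ and of $M$ differ by the conformal matrix $R=tI+fK$, and right multiplication by a conformal matrix preserves trace-free symmetric $2\times2$ matrices; this is exactly what transports minimality of $\vu(\Sigma)\subset S^n$ to minimality of $M$ and the eigenvalue equation $\Delta m=-2m$ to the last remaining twisted-austere equation $B_{11}+B_{22}=0$.
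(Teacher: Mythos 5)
Your proposal is correct, and it gives a genuinely different (and more explicit) route than the paper's. The paper proves this statement only as a ``Sketch of proof'' by introducing a Pfaffian system $\J$ on a jet-type manifold encoding the data $(\Sigma,f,m,\vw)$ and a further prolongation $\calI^{++}$, asserting that one pulls back to the other, and leaving the verification to the reader. You instead adapt the direct moving-frame calculation that the paper does carry out in full for the \emph{ordinary} cone in Section~\ref{splitconesec}: build the adapted frame $\psi$ along $M$ from the $S^n$-frame along $\vu(\Sigma)$, pull back the canonical and connection forms, and read off $A^r$ and $B$. Your pullbacks check out, as does the key structural observation that the cross-sectional coframe of $M$ differs from that of $\Sigma$ by $R=tI+fK$, a conformal matrix, so that right-multiplication by $R^{-1}$ preserves trace-free symmetric $2\times 2$ matrices; this simultaneously transports minimality of $\Sigma\subset S^n$ to minimality of $M$ and $\Delta m=-2m$ (i.e.\ $\tr\tilde B=0$) to $B_{11}+B_{22}=0$, which is all that remains of the twisted-austere system once $B_{00}=-\tan\phi=0$ and $B_{0i}=0$. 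You also verify $\psi^*\mu = m\,\w^0 + m_i\,\w^i$ and ${\rm d}\mu=0$ directly, both of which are correct. The Pfaffian route the paper gestures at is better suited to the existence/generality side of the analysis (it was already set up for the necessity direction), while your direct parametrized computation is cleaner and more self-contained for this converse direction where an explicit immersion is in hand; the two buy the same conclusion. One cosmetic remark: you cite $B_{00}=-\tan\phi$ for condition~\eqref{condc2} a paragraph before you actually compute $B$; reordering would make the logic linear, but nothing is missing.
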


\begin{proof}[Sketch of proof]
Let $\F_S$ denote the orthonormal frame bundle of $S^n$, and let $\J$ be the Pfaffian system on $\F_S \times \R^{2(n-2) + n+9}$ that encodes the minimal surface condition for $\Sigma$, coupled with the equations satisfied by $f$, $m$ and $\vw$. (On the second factor in the product, we~use as coordinates the two free components of the second fundamental form in each normal direction, the three 1-jet variables for $f$, the five free 2-jet variables for $m$, and the components of $\vw$.) Let $\calI^{++}$ be the Pfaffian system on $\F \times \R^{2(n-2)+9}$ that is a further prolongation of $\calI^+$ including the free components of the derivatives of $p$, $q$ as additional variables. Both Pfaffian systems have rank $4n-1$, and one can define a map from the underlying manifold of $\calI^{++}$ to the underlying manifold of $\J$ such that $\calI^{++}$ is the pullback of $\J$, and there is a one-to-one correspondence between admissible integral surfaces of $\J$ and admissible integral 3-manifolds of $\calI^{++}$. Further details are left to the interested reader.
\end{proof}

\subsection{The non-split case}

In this subsection we assume that $(M,\mu)$ is a twisted-austere pair where the components of $\nabla \mu$ and $\II$ (relative to the adapted frame described at the beginning of Section~\ref{austeresec}) take the form~\eqref{austereABform} with $B_{10}^2 + B_{20}^2 > 0$ at every point.

\begin{Remark} \label{analytic-rmk}
In this section we are restricting to the open set $U$, where $B_{10}^2 + B_{20}^2 > 0$. Since~$M$ is austere, it is minimal, and hence real analytic. Moreover, in all cases in this section, the \mbox{$1$-form}~$\mu$ is also real analytic, as it is defined using solutions to a Laplace equation with real analytic right hand side. It~follows that $U$ must in fact be a \emph{dense} open set.
\end{Remark}

By rotating the frame vectors $\ve_1$, $\ve_2$ we may arrange that $B_{20}=0$ and $B_{10}>0$ at every point. It~then follows from~\eqref{C1minimal} and~\eqref{austereABform} that the diagonal entries of $A^r$ must all vanish, and hence~$|\II|$ is $1$-dimensional at each point. Therefore, we~may adapt the normal frame so that $A^r = 0$ for all $r>3$.

\begin{Proposition} \label{non-split-prelim}
Let $\big(M^3,\mu\big)$ be a twisted-austere pair where $M \subset \R^n$ is austere but is not a~generalized helicoid, and such that $\nabla \mu$ does not split. Then $M$ lies in $\R^4$.
\end{Proposition}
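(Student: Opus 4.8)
The plan is to perform a codimension reduction, in the spirit of the argument in Proposition~\ref{thm1step3}, by showing that the first osculating space of $M$ is a fixed $4$-plane. Recall from the discussion preceding the statement that, in the non-split case, after rotating $\ve_1,\ve_2$ so that $B_{20}=0$ and $B_{10}>0$, condition~\eqref{C1minimal} forces all diagonal entries of each $A^r$ to vanish; together with the minimal form~\eqref{austereABform} this says that on the dense open set of Remark~\ref{analytic-rmk} every $A^r$ is a scalar multiple of the fixed symmetric matrix $E$ with $E_{12}=E_{21}=1$ and all other entries zero. Adapting the normal frame we may therefore take $A^3=A^3_{12}\,E$ with $A^r=0$ for $r>3$; and since $M$ is not totally geodesic and (being minimal) real analytic, the function $A^3_{12}$ is nonzero on a dense open subset.

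First I would translate this into the moving frame using~\eqref{waiA}: the relations $A^r=0$ for $r>3$ give $\wt^r_0=\wt^r_1=\wt^r_2=0$ for all such $r$, while $A^3=A^3_{12}\,E$ gives $\wt^3_0=0$, $\wt^3_1=A^3_{12}\,\wt^2$ and $\wt^3_2=A^3_{12}\,\wt^1$. The crucial step is then to differentiate the identities $\wt^r_1=0$ and $\wt^r_2=0$ (for $r>3$) using the structure equations~\eqref{streq}. In ${\rm d}\wt^r_1=-\wt^r_t\wedge\wt^t_1$ every summand vanishes by the relations just recorded except the $t=3$ term, which equals $-A^3_{12}\,\wt^r_3\wedge\wt^2$; similarly ${\rm d}\wt^r_2=0$ collapses to $A^3_{12}\,\wt^r_3\wedge\wt^1=0$. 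Since $\wt^0,\wt^1,\wt^2$ is a coframe along $M$, these two relations force $\wt^r_3=0$ wherever $A^3_{12}\neq 0$, hence on a dense open subset, hence on all of $M$ by real analyticity (Remark~\ref{analytic-rmk}).

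Finally, with $\wt^r_3=0$ for all $r>3$ in hand, \eqref{dees} shows that ${\rm d}\ve_0,{\rm d}\ve_1,{\rm d}\ve_2$ and ${\rm d}\ve_3$ all take values in $\operatorname{span}\{\ve_0,\ve_1,\ve_2,\ve_3\}$ (for $\ve_3$ one uses $\wt^0_3=\wt^3_3=0$ together with $\wt^1_3,\wt^2_3\in\operatorname{span}\{\wt^1,\wt^2\}$; for $\ve_0,\ve_1,\ve_2$ the only possibly nonzero normal component lies in the $\ve_3$ direction), and by~\eqref{deex} so does ${\rm d}\vx$. Hence this $4$-plane field is parallel in the ambient Euclidean space, therefore constant, and $\vx$ maps $M$ into a fixed $4$-dimensional affine subspace, which after a translation we identify with $\R^4$. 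Equivalently, one may invoke Theorem~\ref{codim-reduction-thm}: since $\dim|\II|=1$ the first osculating space has dimension at most $4$, and the first prolongation of $\operatorname{span}(E)\subset S^2 T^*_p M$ is zero, so the osculating space is fixed.

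I expect the differentiation argument producing $\wt^r_3=0$ to be the main obstacle: one must carefully account for which of the forms $\wt^r_s$ and $\wt^r_\alpha$ are already known to vanish, so that exactly one term survives in each structure equation, and one must then remember that the conclusion holds a priori only on the locus $\{A^3_{12}\neq 0\}$, so the real analyticity of $M$ and $\mu$ (Remark~\ref{analytic-rmk}) is genuinely needed to propagate it to all of $M$. The remaining steps are routine bookkeeping with the adapted frame.
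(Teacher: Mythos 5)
Your proof is correct and takes essentially the same route as the paper: the paper simply observes that $|\II|$ is one-dimensional (spanned by your $E$) with trivial first prolongation, and then cites Theorem~\ref{codim-reduction-thm}; your middle two paragraphs are just an explicit unwinding of that theorem's proof in this particular case, and your closing sentence recognizes this. The one worthwhile extra point you make is the care about the locus $\{A^3_{12}\neq 0\}$ and the appeal to real analyticity (Remark~\ref{analytic-rmk}), which the paper's proof leaves implicit when it applies the constant-rank hypothesis of Theorem~\ref{codim-reduction-thm}.
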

\begin{proof}
It is easy to check that the first prolongation of $|\II|$ has dimension zero, so this follows from Theorem~\ref{codim-reduction-thm} in Appendix~\ref{codim-reduction-sec}.
\end{proof}

For the rest of this section we will assume that $M$ lies in $\R^4$, and will continue to use indices $0, \dots, 3$ to label the members of the moving frame. We~will let $\calI$ denote the Pfaffian exterior differential system associated to our adapted frame, analogous to that defined at the beginning of Section~\ref{splitsec}, but with a much shorter list $\w^3$, $\beta_a$, $\Omega_a$ of $1$-form generators. Here, $\beta_a$ is as defined in~\eqref{betadef}, where $B$ is now assumed to have the form
\begin{gather*}
B=\begin{pmatrix} -\tan \phi & B_{10} & 0 \\ B_{10} & B_{11} & B_{12} \\ 0 & B_{12} & B_{22} \end{pmatrix}
\end{gather*}
subject to the condition~\eqref{C0expanded}, which now takes the form
\begin{gather} \label{nonsplitBcond}
B_{11}+B_{22} + B_{10}^2 \cos\phi (\sin\phi + \cos\phi B_{22})=0,
\end{gather}
and $\Omega_a = -\w^3_a + A_{ab} \w^b$, where
\begin{gather*}
A=\begin{pmatrix} 0 & 0 & 0 \\ 0 & 0 & h \\ 0 & h & 0 \end{pmatrix} .
\end{gather*}
The system $\calI$ thus has rank 7, and is defined on $\F \times N$ where $N \subset \R^4 \times \calS_3$ has coordinates $h$, $\mu_a$, $B_{ab}$ satisfying
$B_{00}=-\tan\phi$, $B_{20}=0$ and~\eqref{nonsplitBcond}. (We will solve this equation for $B_{11}$ in~terms of the other coordinates, and assume that $h$ and $B_{10}$ are positive. By Remark~\ref{analytic-rmk} this assumption will hold on a dense open set.)

\begin{Lemma} \label{non-split-cases}
The conclusions of Lemma~$\ref{pqprolong}$ apply in the non-split case as well, but solutions with $q\ne 0$ are not possible.
\end{Lemma}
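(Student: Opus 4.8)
The plan is to handle the two assertions in turn: the first by invoking the already-proved parts of Lemma~\ref{pqprolong}, and the second by a prolongation-plus-integrability argument in the style of Lemma~\ref{conelemma}.

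\emph{The conclusions of Lemma~\ref{pqprolong}.} By the frame adaptation made just before the statement, in the non-split case $|\II|$ is one-dimensional and $A = A^3$ has rank two on the dense open set where $h>0$ (Remark~\ref{analytic-rmk}). This is precisely the setting of the ``$|\II|$ one-dimensional'' sub-case in the proof of Lemma~\ref{pqprolong}, with the scalar $\lambda$ there replaced by $h$: the relation ${\rm d}\Omega^3_0 \equiv -h\big(\w^1_0 \wedge \w^2 - \w^2_0 \wedge \w^1\big) \equiv 0$ forces, via the Cartan Lemma, that $\w^i_0 = P^i_j \w^j$ with no $\w^0$-component, and then the vanishing of ${\rm d}\Omega^3_1 \wedge \w^2 - {\rm d}\Omega^3_2 \wedge \w^1$ together with \eqref{APcond} forces $P = \left(\begin{smallmatrix} p & q \\ -q & p \end{smallmatrix}\right)$, which is \eqref{austpq}. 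Since the $\ve_0$-row of $A$ vanishes (see \eqref{austereABform}), the comparison of ${\rm d}\ve_0$ with ${\rm d}\vx$ is unchanged, so $M$ is a cylinder, a cone, or a twisted cone according to whether $(p,q)=(0,0)$, $q=0 \ne p$, or $q \ne 0$.

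\emph{Ruling out $q \ne 0$.} Suppose $q$ does not vanish identically. Since $M$ is minimal, hence real analytic, and $\mu$ is likewise real analytic (Remark~\ref{analytic-rmk}), we may restrict to a connected open set on which $q$, $h$ and $B_{10}$ are all positive, so that $M$ is there a twisted cone. As in Section~\ref{twistconesec}, prolong $\calI$ by taking $p,q$ as coordinates on a new $\R^2$-factor and adjoining $\alpha_1 := \w^0_1 + p\w^1 + q\w^2$ and $\alpha_2 := \w^0_2 - q\w^1 + p\w^2$; differentiating these and reducing modulo the enlarged ideal produces relations of the type \eqref{twistedconetempeq} that fix the $\w^0$-components of ${\rm d}p$ and ${\rm d}q$. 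One then reduces the generator $2$-forms ${\rm d}\beta_a$ modulo the prolonged ideal, using \eqref{dbetacomp}, \eqref{betadef}, \eqref{austpq}, \eqref{austereABform}, the constraint \eqref{nonsplitBcond}, the relations just obtained, and the $B_{10}$-terms now present in $\beta_0$ and $\beta_1$ (absent in the split case of Lemma~\ref{conelemma}). Wedging the resulting $2$-forms with $\w^0$, $\w^1$, $\w^2$ to clear the still-unconstrained jet variables (${\rm d}B_{10}$, ${\rm d}B_{12}$, ${\rm d}B_{22}$, the ${\rm d}\mu_a$, and the undetermined connection components) yields a system of scalar integrability conditions. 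This system is over-determined: one first extracts $\sin\phi = 0$, so that \eqref{nonsplitBcond} collapses to $B_{11} = -B_{22}\big(1+B_{10}^2\big)$ exactly as in the corresponding step of Lemma~\ref{pqprolong}, and then a remaining condition forces a nonzero multiple of $B_{10}^2$ (times $\w^0 \wedge \w^1 \wedge \w^2$) to vanish, contradicting $B_{10}>0$. Hence $q \equiv 0$.

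The first assertion is essentially bookkeeping. The substance, and the main obstacle, is the last computation: one must organize the reduction of the ${\rm d}\beta_a$ modulo the prolonged Pfaffian ideal so that the genuinely algebraic obstruction — the relation free of unconstrained jet variables — is isolated cleanly, and verify that the coefficient which must vanish is a nonzero multiple of $B_{10}^2$; in particular that it survives the $\sin\phi=0$ reduction and is not cancelled against terms carrying the still-undetermined derivatives of $p$ and $q$. It is also worth noting that imposing $B_{20}=0$ by a tangent-frame rotation is compatible with the prolongation, since $B_{20}=0$ is a defining equation of $N$.
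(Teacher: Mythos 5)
Your first paragraph matches the paper's treatment of the first assertion: the $|\II|$-one-dimensional branch of the proof of Lemma~\ref{pqprolong} gives $\w^i_0 = P^i_j\w^j$ with $P = \left(\begin{smallmatrix}p&q\\-q&p\end{smallmatrix}\right)$, and since the $\ve_0$-row of $A^3$ vanishes, the cylinder/cone/twisted-cone correspondence is unchanged.

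The second paragraph, however, diverges from the paper and rests on an unverified (and, as it turns out, misplaced) intermediate claim. You propose to prolong by adjoining $\alpha_1, \alpha_2$, to first extract $\sin\phi = 0$, and then to find an obstruction proportional to $B_{10}^2$. The paper never establishes $\sin\phi = 0$ in ruling out the non-split twisted cone — in fact the $\tan\phi$-terms cancel out of the relevant computation — and no prolongation is needed. The paper's argument works directly modulo the unprolonged ideal: substituting~\eqref{austpq} one computes
\begin{gather*}
{\rm d}\beta_0 \wedge \w^1 \equiv -B_{10}\big(2q\,\w^0 + \w^2_1\big)\wedge\w^1\wedge\w^2, \qquad
{\rm d}\Omega_1 \wedge \w^2 \equiv -h\big(q\,\w^0 + 2\w^2_1\big)\wedge\w^1\wedge\w^2,
\end{gather*}
both modulo $\calI_1$. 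These $3$-forms lie in the ideal and hence vanish on any admissible integral manifold; the linear combination that eliminates the undetermined connection form $\w^2_1$ leaves a nonzero multiple of $B_{10}\,h\,q\;\w^0\wedge\w^1\wedge\w^2$ in the ideal, and since $B_{10},h>0$ and $\w^0\wedge\w^1\wedge\w^2\ne 0$ on admissible integral manifolds, this forces $q=0$. Note the contrast with the split case: there ${\rm d}\beta_0 \equiv -2q\tan\phi\,\w^1\wedge\w^2$ gave the obstruction $q\tan\phi$ and merely forced $\tan\phi = 0$, whereas here the $B_{10}$-terms that are present precisely because $\nabla\mu$ does not split give a stronger obstruction that kills $q$ outright.

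In short, the route through prolongation and a $\sin\phi = 0$ reduction is not what the calculation produces, and since you have not carried out the computations the sketch does not establish the result. The observation you are missing is that the twisted-cone obstruction in the non-split case is already visible in the $2$-forms of the unprolonged ideal once ${\rm d}\beta_0 \wedge \w^1$ and ${\rm d}\Omega_1 \wedge \w^2$ are combined to eliminate $\w^2_1$.
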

\begin{proof}
The first assertion~\eqref{austpq} follows by the same argument made in the proof of Lemma~\ref{pqprolong} for the case where $|\II|$ is 1-dimensional,
and the correspondence between the values of $p$, $q$ and the branches of the Bryant classification is the same. To eliminate the possibility of twisted cones, we~compute the system 2-forms and, using the values given by~\eqref{austpq}, one can compute that
\begin{gather*}
\left.
\begin{aligned}
{\rm d}\beta_0 \wedge \w^1 &\equiv -B_{10} \big(2q\w^0 +\w^2_1\big) \wedge \w^1 \wedge \w^2 \\
{\rm d}\Omega_1 \wedge \w^2 &\equiv -h\big(q\w^0+2\w^2_1\big) \wedge \w^1 \wedge \w^2
\end{aligned}\right\} \mod \calI_1.
\end{gather*}
Linearly combining the 3-forms on the right-hand sides above to eliminate $\w^2_1 \wedge \w^1 \wedge \w^2$ shows that $B_{10} h q \w^1 \wedge \w^2 \wedge \w^3$ is in the ideal. Given our assumptions that $B_{10}$ and $h$ are positive, we~see that admissible integral manifolds with $q\ne 0$ are not possible.
\end{proof}

We now consider the two sub-cases given by Lemma~\ref{non-split-cases}. As before, $m = \ve_0 \intprod \mu = \mu_0$ is~the~slope.

\subsubsection[M is a cylinder]{$\boldsymbol M$ is a cylinder}

As in Section~\ref{splitcylindersec}, $M = \Sigma_0 \times\R$ where $\Sigma_0$ is minimal surface in $\R^3$, and we let $t$ denote the coordinate on the second factor, hence $\w^0 = {\rm d}t$. Using~\eqref{austpq} with $p=q=0$ shows that
\begin{gather} \label{nonsplitcylinderdm}
{\rm d}m \equiv -\tan \phi\, \w^0 + B_{10}\w^1 \quad \mod \calI_1.
\end{gather}
Therefore, we~see that now $m + t\tan \phi$ is non-constant, in contrast to Section~\ref{splitcylindersec}. Nevertheless,~$M$ is still described by the construction of Theorem~\ref{cylinderthm} for ambient space $\R^4$, but now it~follows from~\eqref{mu0eq} that the function $k=( m + t \tan \phi) \cos \phi$ is non-constant. Hence, not only is~$M$ a~product of the minimal surface $\Sigma_0 \subset \R^3$ with a line, it is also the family of parallel lines through a non-trivial minimal surface $\Sigma \subset \R^4$ which is a graph over $\Sigma_0$.

The set of such pairs $(\Sigma_0,\Sigma)$ forms a 5-parameter family (modulo rigid motions) and can be determined by solving a system of ordinary differential equations whose solutions are expressible using elliptic functions. Since equation~\eqref{nonsplitcylinderdm} shows that $m$ is constant along the asymptotic directions of $M$ that are annihilated by $\w^2$, the set of twisted-austere pairs $(M,\mu)$ where $M^3$ is an austere cylinder with $m+t\tan \phi$ non-constant is in one-to-one correspondence with the 4-parameter sub-family of these pairs $(\Sigma_0,\Sigma)$ where the minimal graph $\Sigma$ has constant height along one set of asymptotic lines of $\Sigma_0$.

\subsubsection[M is a cone]{$\boldsymbol M$ is a cone}

As in Section~\ref{splitconesec}, in this case we assume that $p > 0$ and $q=0$ identically, and we begin by~defining a partial prolongation. As before, we~use $p$ as a new coordinate and define $1$-forms
\begin{gather*} %\label{coneaddalpha}
\alpha_1 := \w^0_1 + p \w^1, \qquad
\alpha_2 := \w^0_2 + p\w^2, \qquad
\alpha_3:={\rm d}p + p^2 \w^0.
\end{gather*}
We can compute that ${\rm d} \Omega_0\equiv 0$ and
\begin{align*}
{\rm d} \Omega_1 &\equiv -2h \w^2_1 \wedge \w^1 + \big({\rm d}h +hp \w^0\big) \wedge \w^2,\\
{\rm d} \Omega_2 &\equiv \big({\rm d}h+hp \w^0\big) \wedge \w^1 + 2h\w^2_1 \wedge \w^2
\end{align*}
modulo $\calI_1$, $\alpha_1$, $\alpha_2$. It~follows from the above equations that on any admissible integral manifold, there will be functions $u_1$, $u_2$ such that
\begin{gather*}
\w^2_1 = u_1 \w^1 + u_2 \w^2, \qquad
{\rm d}h = h\big({-}2u_2 \w^1 + 2u_1 \w^2 -p\w^0\big).
\end{gather*}
Because of this, we~will thus define the prolongation $\calI^+_1$ on $\F \times N \times \R^3$, with coordinates~$p$,~$u_1$,~$u_2$ on the last factor, by adjoining $\alpha_1$, $\alpha_2$, $\alpha_3$ as well as
\begin{gather*}
\alpha_4 := -\w^2_1 + u_1\w^1 + u_2 \w^2,
\\
\alpha_5 := -{\rm d}h + h\big({-} 2u_2 \w^1+2u_1 \w^2 -p\w^0\big).
\end{gather*}

\begin{Lemma} \label{coneconstant}
Austere bases of cone type with $\mu$ non-split only exist for $\tan\phi = 0$. $($As in Section~$\ref{splitconesec}$, this means that the slope must be constant along the rulings.$)$
\end{Lemma}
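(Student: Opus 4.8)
The plan is to follow the strategy of Lemma~\ref{conelemma}: differentiate the $1$-form generators of the prolonged system $\calI^+_1$, reduce the resulting $2$-forms modulo those generators, and isolate a torsion term that cannot be absorbed and so must vanish. On an admissible integral $3$-manifold $\Mhat$, the vanishing of the $\alpha$'s gives $\w^i_0 = p\w^i$ (recall $q=0$ here), $\w^2_1 = u_1\w^1 + u_2\w^2$, ${\rm d}h = h\big({-}2u_2\w^1 + 2u_1\w^2 - p\w^0\big)$ and ${\rm d}p = -p^2\w^0$; the $\Omega_a$ give $\w^3_a \equiv A_{ab}\w^b$; and the $\beta_a$ give ${\rm d}\mu_a \equiv \mu_b\w^b_a + B_{ab}\w^b$. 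Feeding these, together with the structure equations~\eqref{streq} and the identity~\eqref{dbetacomp} for ${\rm d}\beta_a$, into the exterior derivatives of the generators, one computes ${\rm d}\beta_0$, ${\rm d}\beta_1$, ${\rm d}\beta_2$ modulo the $1$-forms of $\calI^+_1$.

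The feature that distinguishes this from the split case is the \emph{nonlinear} relation~\eqref{nonsplitBcond} tying $B_{11}$ to $B_{10}$ and $B_{22}$: differentiating it expresses ${\rm d}B_{11}$ as an explicit linear combination of ${\rm d}B_{10}$ and ${\rm d}B_{22}$, with coefficients built from $1 + B_{10}^2\cos^2\phi$ and $B_{10}\cos\phi(\sin\phi + B_{22}\cos\phi)$. Because of this coupling, the obstruction does not surface at the first prolongation: imposing ${\rm d}\beta_0 = {\rm d}\beta_1 = {\rm d}\beta_2 = 0$ turns out merely to pin down ${\rm d}B_{10}$ completely and to leave a few free components among ${\rm d}B_{22}$, ${\rm d}u_1$ and ${\rm d}u_2$, while the naive analogue ${\rm d}\beta_1 \wedge \w^2 - {\rm d}\beta_2 \wedge \w^1$ of the combination used in Lemma~\ref{conelemma} vanishes identically once those relations are substituted. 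I would therefore prolong once more, adjoining the remaining free components of ${\rm d}B_{22}$, ${\rm d}u_1$, ${\rm d}u_2$ as new coordinates with the corresponding new $1$-form generators. Differentiating one of these new generators and reducing modulo the twice-prolonged ideal, after substituting every lower-order relation, should yield a $2$-form congruent to a nonzero multiple of $\tan\phi\,\w^1\wedge\w^2$, the multiple being a product of the positive quantities $h$, $B_{10}$, $p$ and $1 + B_{10}^2\cos^2\phi$. Since this $2$-form pulls back to zero on $\Mhat$ and its coefficient never vanishes on the dense open set where $h, B_{10} > 0$ (see Remark~\ref{analytic-rmk}), we get $\tan\phi = 0$. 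Finally, from the form of $\beta_0$ in~\eqref{betadef} one has ${\rm d}m \equiv p\mu_i\w^i + B_{10}\w^1$ modulo the $1$-forms, which has no $\w^0$-component, so the slope is constant along the rulings, as asserted in the statement and as in Section~\ref{splitconesec}.

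The one genuine obstacle is the computational bookkeeping. In the split case the single \emph{linear} constraint $B_{11} + B_{22} = 0$ makes every free-derivative contribution cancel in one symmetric combination, so a single prolongation suffices; here the nonlinear constraint intertwines ${\rm d}B_{11}$ with ${\rm d}B_{10}$ and ${\rm d}B_{22}$, so one must first carry out the entire first-prolongation integrability analysis --- to learn which derivative components are already determined, and with which values --- before the $\tan\phi$-torsion becomes visible at the next level. The danger lies in mishandling that coupling or in miscounting which $2$-jet components remain free.
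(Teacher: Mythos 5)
Your overall strategy --- iterate exterior differentiation of the generators of $\calI^+_1$, reduce modulo the ideal, and show that the torsion forces $\tan\phi = 0$ --- is the right one and matches the paper's. You also correctly identify that the nonlinear constraint~\eqref{nonsplitBcond} couples ${\rm d}B_{11}$ to ${\rm d}B_{10}$ and ${\rm d}B_{22}$, and you correctly observe that the ``naive'' combination ${\rm d}\beta_1\wedge\w^2 - {\rm d}\beta_2\wedge\w^1$ is not enough; the paper instead uses ${\rm d}\beta_1\wedge\w^2 - \big(1+\cos^2\phi\,B_{10}^2\big)\,{\rm d}\beta_2\wedge\w^1$ to isolate ${\rm d}B_{10}$. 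The closing remark about the slope (${\rm d}m$ has no $\w^0$-component once $\tan\phi=0$) is also fine.

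However, the central claim --- that after determining ${\rm d}B_{10}$ one can prolong once more, adjoining the remaining free components of ${\rm d}B_{22}$, ${\rm d}u_1$, ${\rm d}u_2$, and that differentiating a single new generator ``should yield a $2$-form congruent to a nonzero multiple of $\tan\phi\,\w^1\wedge\w^2$'' with coefficient a product of $h$, $B_{10}$, $p$, $1+B_{10}^2\cos^2\phi$ --- does not match what actually happens, and is where the gap lies. The paper never adjoins those components as new jet variables. Instead, having pinned down ${\rm d}B_{10}$ explicitly (equation~\eqref{deeb}), it differentiates the resulting $\R$-valued expression, combines the resulting $2$-form $\Upsilon$ with ${\rm d}\alpha_4$, ${\rm d}\alpha_5$, ${\rm d}\beta_2$ to extract a \emph{scalar} polynomial torsion condition $S_1 = 2B_{10}^3 u_2 - 3B_{10}^2 p(B_{22}+\tan\phi) + p\tan\phi\sec^2\phi = 0$, differentiates again to get a second polynomial, eliminates $B_{22}$ to get $S_2 = B_{10}^3 u_2 - p\tan\phi\sec^2\phi$, and only on a \emph{third} differentiation of $S_2$ finds the $3$-form $p(-7B_{10}^3u_2 + p\tan\phi\sec^2\phi)\,\w^0\wedge\w^1\wedge\w^2$, which forces $\tan\phi=0$ because $S_2=0$ and this expression cannot vanish simultaneously when $p\neq 0$. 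So the obstruction is not a single prolongation step producing a clean $\tan\phi$-proportional $2$-form in $\w^1\wedge\w^2$; it surfaces as a $\w^0\wedge\w^1\wedge\w^2$ torsion term only after three successive levels of scalar integrability analysis, and its coefficient does not involve $h$ or $B_{10}$ in the simple multiplicative way you predicted. The ``one genuine obstacle'' you flag --- the bookkeeping --- is indeed the whole proof, and the plan as written mischaracterizes both the depth of the iteration required and the form of the final obstruction, so it cannot be followed as stated.
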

\begin{proof}
The computations are quite involved, but we carefully describe all the steps so that the reader will be able to fill in all the details if desired.

We will compute the 2-forms modulo the newly-added $1$-forms of $\calI^+$. Thus, all the congruences in this proof will be modulo $\calI^+_1$. To begin, one can compute that
\begin{gather*}
{\rm d}\beta_0 \equiv \big({\rm d}B_{10} +2B_{10} p \w^0 -B_{10} u_1 \w^2 \big) \wedge \w^1.
\end{gather*}

It follows that on any admissible integral manifold there is a smooth function $Z$ such that
\begin{gather} \label{deebZ}
{\rm d}B_{10} = -2B_{10} p \w^0+Z \w^1 +B_{10} u_1 \w^2.
\end{gather}
On the other hand, we~can compute that
\begin{gather*}
{\rm d}\beta_1 \equiv {\rm d}B_{10} \wedge \w^0 -\cos^2\phi \big( 2B_{10} (B_{22}+\tan\phi) {\rm d}B_{10} + \big(B_{10}^2 + \sec^2\phi\big) {\rm d}B_{22}\big) \wedge \w^1
\\ \hphantom{{\rm d}\beta_1 \equiv}
{} + {\rm d}B_{12}\wedge \w^2 + \cdots,
 \\
{\rm d}\beta_2 \equiv {\rm d}B_{12} \wedge \w^1 + {\rm d}B_{22}\wedge \w^2 + \cdots,
\end{gather*}
where for the moment we have omitted terms that are ``torsion'' (that is, linear combinations of~$\w^0 \wedge \w^1$, $\w^0 \wedge \w^2$ and $\w^1 \wedge \w^2$). These terms come into play when we linearly combine these 2-forms in $\calI^+$ so as to eliminate the terms involving ${\rm d}B_{12}$ and ${\rm d}B_{22}$, obtaining the following 3-form:
\begin{gather*}
{\rm d}\beta_1 \wedge \w^2 \!-\!\big(1\!+\!\cos^2\phi B_{10}^2\big) {\rm d}\beta_2 \wedge \w^1
\equiv -{\rm d}B_{10} \wedge \big( \w^2 \wedge \w^0\! +\! 2\cos^2\phi B_{10}(B_{22}\!+\!\tan\phi) \w^1 \wedge \w^2\big)
\\ \hphantom{{\rm d}\beta_1 \wedge \w^2 \!-\!\big(1\!+\!\cos^2\phi B_{10}^2\big) {\rm d}\beta_2 \wedge \w^1
\equiv\,}
{} + \big( 2p\tan\phi -B_{10} u_2 -\cos^2\phi B_{10}^3 u_2\big) \w^0 \wedge \w^1 \wedge \w^2.
\end{gather*}
Substituting in for ${\rm d}B_{10}$ from~\eqref{deebZ}, and solving for $Z$ so that the 3-form on the right vanishes, we~obtain
\begin{gather}
{\rm d}B_{10} = \big[ 2p\tan\phi - B_{10} u_2 + \cos^2\phi B_{10}^2 (4p(B_{22} + \tan\phi) - B_{10} u_2)\big] \w^1 \nonumber\\
\hphantom{{\rm d}B_{10} =}{} + B_{10} \big(u_1 \w^2 - 2 p \w^0\big). \label{deeb}
\end{gather}

Next, differentiating the right-hand side of~\eqref{deeb}, and using the value of ${\rm d}B_{10}$ given by~\eqref{deeb}, yields a 2-form $\Upsilon$ that must vanish on all integral submanifolds. Wedging this with $\w^2$ and linearly combining this with other 3-forms in $\calI^+$ yields
\begin{gather*}
\sec^2\phi \bigg(\Upsilon -\dfrac{B_{10}}{2h} {\rm d}\alpha_5\bigg)\wedge \w^2 + \big(4p B_{10}^2 {\rm d}\beta_2-B_{10}^3 {\rm d}\alpha_4\big) \wedge \w^1
\\ \qquad
{} \equiv 4p\big( 2B_{10}^3 u_2 -3B_{10}^2 p (B_{22}+\tan\phi) + p\tan\phi \sec^2\phi \big)\w^0 \wedge \w^1 \wedge \w^2.
\end{gather*}
Thus, all admissible integral submanifolds must lie in the zero locus of the polynomial
\begin{gather*}
S_1 := 2B_{10}^3 u_2 -3B_{10}^2 p (B_{22}+\tan\phi) + p\tan\phi \sec^2\phi.
\end{gather*}
We then differentiate the above expression and again use~\eqref{deeb} to compute
\begin{gather*}
{\rm d} S_1 \wedge \w^1 \wedge \w^2 + 3\sec^2\phi\bigg(\Upsilon -\dfrac{B_{10}}{2h} {\rm d}\alpha_5\bigg)\wedge \w^2 + 5B_{10}^3 {\rm d}\alpha_4 \wedge \w^1
\\ \qquad
{}\equiv 4\big({-}11 p B_{10}^3 u_2+9 B_{10}^2 p^2 (B_{22}+\tan\phi)+2 p^2 \tan\phi \sec^2\phi \big)\w^0 \wedge \w^1 \wedge \w^2.
\end{gather*}
This yields a second polynomial integrability condition, and eliminating $B_{22}$ between the two polynomials shows that all integral submanifolds must lie in the zero locus of
\begin{gather*}
S_2:= B_{10}^3 u_2 - p\tan\phi \sec^2\phi.
\end{gather*}
Differentiating the above expression and again using~\eqref{deeb}, we~obtain that
\begin{gather*}
{\rm d}S_2 \wedge \w^1 \wedge \w^2 +B_{10}^3 {\rm d}\alpha_4 \wedge \w^1 \equiv p\big({-}7B_{10}^3 u_2 + p\tan\phi \sec^2\phi\big)\w^0 \wedge \w^1 \wedge \w^2.
\end{gather*}
This last polynomial cannot vanish at the same time as $S_2$ unless $\tan\phi=0$.
\end{proof}

As in Section~\ref{splitconesec}, we~conclude that the slope $m$ is a well-defined function on the minimal surface $\Sigma$ inside $S^3$, and satisfies $\Delta m = -2m$. However, in this case $m$ and $\Sigma$ turn out \emph{not} to be arbitrary. This is because the computations in the proof of Lemma~\ref{coneconstant} imply that $u_2=0$ and $B_{22}=0$ identically on all such solutions. Consequently, the system $\calI^+$ simplifies. For example, equation~\eqref{deeb} along with the vanishing of $\alpha_3$, $\alpha_4$, $\alpha_5$ now implies that $B_{10}^2$ is a constant multiple of~$hp^3$.

After an additional prolongation step, we~obtain a Frobenius system. This means that solutions of this type are determined by solving systems of ODE, and so depend on finitely many constants. While leaving the details for the interested reader, the end result is that, up to a~rigid motion, the surface $\Sigma$ is the torus in $S^3$ that is parametrized by
\begin{gather*}
(t,u) \mapsto [\cos(t) \cos(au), \cos(t)\sin(au), \sin(t) \cos(u/a), \sin(t)\sin(u/a)],
\end{gather*}
where $a$ is a positive constant. (Notice that the surface is compact if $a^2$ is rational.) The slope function is given by
\begin{gather*}
m = (c_1 \cos(au)+ c_2 \sin(au)) \cos(t) + (c_3 \cos(u/a) +c_4 \sin(u/a)) \sin(t),
\end{gather*}
while on $M$, the $1$-form is $\mu = {\rm d}(ms) + c_5 {\rm d}u$. (Here, $c_1, \dots, c_5$ are arbitrary constants. However, since $m$ is a linear combination of the $\R^4$ coordinates of $\Sigma$, the constant $c_5$ should be chosen to be nonzero so that the resulting special Lagrangian submanifold is not just a translation of~$N^* M$.)

\section{Classification results} \label{sec:classification}

In this section, we~prove that the only examples of twisted-austere 3-folds in Euclidean space are precisely those that we have already discussed. More precisely we have the following result.

\begin{Theorem} \label{thm:class}
Let $(M,\mu)$ be a twisted-austere pair, where $M^3 \subset \R^{n+1}$ is not totally geodesic. Then either $M$ is austere or $M$ is a cylinder.
\end{Theorem}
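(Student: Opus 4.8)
The plan is to feed Theorem~\ref{theoremone} into the ruled-case analysis of Sections~\ref{sec:tw3folds}--\ref{austeresec}. Applied to $M^3\subset\R^{n+1}$, that theorem says that either $M\subset\R^5$ is a generalized helicoid swept out by planes, or $M$ is ruled by lines. In the first case $M$ is minimal (this is established in the proof of Proposition~\ref{thm1step3}) and $\det A^\nu=0$ for every $\nu$ by Proposition~\ref{thm1step1}, so $M$ is austere and there is nothing to prove. Since a $3$-dimensional submanifold is austere exactly when it is minimal and $\det A^\nu=0$ for all $\nu$, and the latter holds automatically by Proposition~\ref{thm1step1}, in the remaining (ruled) case ``$M$ is austere'' means precisely ``$M$ is minimal''. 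So the whole theorem reduces to the local statement: \emph{if $M$ is ruled by lines and is not minimal, then its ruling direction is parallel}, i.e.\ $M$ is a cylinder; a short globalization argument for connected $M$ then recovers the stated dichotomy.

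I would work with the adapted moving frame of Section~\ref{austeresec}, with $\ve_0$ along the rulings, so that every $A^\nu$ has vanishing $\ve_0$-row and column, $B_{00}=-\tan\phi$ (Proposition~\ref{thm1step3}), and $\cos\phi\ne0$ (Proposition~\ref{thm1step1}). The first step is to record the non-minimal analogue of~\eqref{C1minimal}: a short computation, expanding $\sigma_1\big(A^\nu(I-\adj B)\big)$ and $\{A^\nu,B\}$ in the frame entries and using $B_{00}=-\tan\phi$, shows that~\eqref{condc1} is equivalent to
\begin{gather*}
\tr A^\nu + \cos^2\phi\,\big(A^\nu_{11}B_{20}^2 - 2A^\nu_{12}B_{10}B_{20} + A^\nu_{22}B_{10}^2\big)=0 \qquad\text{for all }\nu,
\end{gather*}
which collapses to~\eqref{C1minimal} once $\tr A^\nu=0$. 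Assuming $M$ not minimal, I would work on the open set $U$ where $\tr A^\nu\ne0$ for some $\nu$: the displayed identity forces $(B_{10},B_{20})\ne(0,0)$ on $U$, so after rotating $\ve_1,\ve_2$ we may take $B_{20}=0$, $B_{10}>0$ on $U$, whereupon it becomes
\begin{gather*}
A^\nu_{11}=-\kappa\,A^\nu_{22}\quad\text{for all }\nu,\qquad \kappa:=1+\cos^2\phi\,B_{10}^2>1.
\end{gather*}
Hence $|\II_p|$ is at most two-dimensional, spanned in the $\ve_1,\ve_2$ block by $\left(\begin{smallmatrix}-\kappa&0\\0&1\end{smallmatrix}\right)$ and $\left(\begin{smallmatrix}0&1\\1&0\end{smallmatrix}\right)$; by Lemma~\ref{no-rank-one} every nonzero $A^\nu$ has rank two, which is consistent since the $2\times2$ block of such a matrix has determinant $-\kappa(A^\nu_{22})^2-(A^\nu_{12})^2<0$.

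Next I would run an exterior differential systems argument modelled on Section~\ref{austeresec}. Set up the Pfaffian system $\calI$ generated by $\w^3,\dots,\w^n$, the $\beta_a$ of~\eqref{betadef} and the $\Omega^r_a$ of~\eqref{Omegaradef}, on $\F\times N$ where $N$ is the locus in tensor space cut out by $B_{00}=-\tan\phi$, $B_{20}=0$, $A^r_{0a}=0$, the relation $A^r_{11}=-\big(1+\cos^2\phi\,B_{10}^2\big)A^r_{22}$, and the explicit form of~\eqref{condb0} for this shape of $B$ (the non-minimal analogue of~\eqref{C0expanded}). Because $M$ is ruled, $\w^a_0=0$ and $\w^i_0=P^i_j\w^j$ for functions $P^i_j$, and $M$ is a cylinder precisely when $P\equiv0$; the target is $P\equiv0$ on $U$. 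Differentiating $\Omega^r_0$ exactly as in the proof of Lemma~\ref{pqprolong} produces the symmetry relation~\eqref{APcond}, i.e.\ $A^rP$ is symmetric for every $r$; combined with the two spanning matrices of $|\II_p|$ this forces $P=\left(\begin{smallmatrix}p&q\\-\kappa q&p\end{smallmatrix}\right)$ in the two-dimensional case, and a similar normal form after a frame adaptation when $\dim|\II_p|=1$. One then prolongs by adjoining $p$, $q$ (and, where the structure equations demand it, the free components of $\w^0_1$, $\w^0_2$, ${\rm d}p$, ${\rm d}q$) just as for cones and twisted cones, and computes the torsion. The claim, to be checked by the same sort of calculation as in Lemmas~\ref{pqprolong} and~\ref{conelemma} and the ``cone'' computation of the non-split case, is that here---in contrast to the austere situation, where cone and twisted-cone branches survived for $\tan\phi=0$---the integrability conditions, fed by the surviving equation~\eqref{condb0}, force $p=q=0$. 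Then $\ve_0$ is constant and $M$ is a cylinder over a surface in a hyperplane perpendicular to $\ve_0$.

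The hardest part will be that final computation. One no longer has the simplification $B_{11}+B_{22}=0$ available in the austere case; it is replaced by the $\kappa$-dependent relation coming from~\eqref{condb0}, and $B_{10}$---hence $\kappa$---is a genuine unknown function that must be differentiated repeatedly. So the bookkeeping (differentiating $\beta_a$, $\Omega^r_a$ and the adjoined $1$-forms modulo the ideal, solving for torsion coefficients, then eliminating variables between successive polynomial integrability conditions) is heavier than in the corresponding austere lemmas, and the $\dim|\II_p|=1$ sub-case has to be carried alongside the $\dim|\II_p|=2$ one. I expect the prolongation to terminate in a Frobenius system all of whose admissible integral manifolds have $p=q=0$; confirming this---and dispatching the mild globalization from $U$ to a connected $M$---is where the real effort lies.
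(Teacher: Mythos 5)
Your strategy is essentially the paper's: reduce via Theorem~\ref{theoremone} to the ruled non-helicoid case, derive the trace identity (the paper's equation~\eqref{C1general}, which is your $\cos^2\phi$-scaled version), normalize $v_2=0$ so that $A^\nu_{11}=-\kappa A^\nu_{22}$ and hence $\dim|\II|\le 2$, and then run an EDS/prolongation argument on the locus where $M$ is not minimal to force the ruling direction to be parallel. The decisive computation you leave to be checked is carried out in the paper via the Cartan-lemma parametrization $\w^i_0=P_{ij}A^3_{jk}\w^k$ with $P$ symmetric (equivalent to your ``$A^\nu P$ symmetric'' normal form), yielding the dichotomy ``$p=0$ or $v_1A^3_{22}=0$ (forcing minimality)'' in the $\dim|\II|=2$ case and a multi-step elimination of polynomials $Q_0,\dots,Q_3$ in the $\dim|\II|=1$ case — confirming your expectation that the prolongation forces the cylinder conclusion where $M$ is not minimal.
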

\begin{proof}
As in the proof of Lemma~\ref{coneconstant}, we~describe all the steps and leave the details to the reader.

By Proposition~\ref{thm1step3} we can assume that $M$ falls into case $(i)$ of Proposition~\ref{thm1step2}, because otherwise $M$ is an austere generalized helicoid. Thus, as we did in Section~\ref{austeresec}, we~may adapt a~moving frame $(\ve_0, \dots, \ve_n)$ along $M$ so that $\nabla \mu$ and the second fundamental form in the direction of $\ve_r$ (for $3 \le r\le n)$ are represented respectively by
\begin{gather*} %\label{generalABform}
B= \begin{pmatrix} -\tan\phi & v_1 & v_2 \\ v_1 & B_{11} & B_{12} \\ v_2 & B_{12} & B_{22} \end{pmatrix} ,
\qquad
A^r = \begin{pmatrix} 0 & 0 & 0 \\ 0 & A^r_{11} & A^r_{12} \\ 0 & A^r_{12} & A^r_{22}
\end{pmatrix} .
\end{gather*}
In terms of these, the twisted-austere condition~\eqref{condc1} takes the form
\begin{gather} \label{C1general}
\vv^{\rm T} \adj(\ba^r) \vv + \sec^2\phi \tr(\ba^r) = 0,
\end{gather}
where $\vv^{\rm T} = \begin{bmatrix} v_1 & v_2 \end{bmatrix}$ and $\ba^r$ is the lower-right $2\times 2$ block of $A^r$. If $v_1=v_2=0$ identically then~\eqref{C1general} implies that $M$ is minimal, and hence austere since $\det(A^r)=0$ already. Thus, we~will assume from now on that $M$ is not minimal, and hence that one of $v_1$, $v_2$ is nonzero at each point.

Because equation~\eqref{C1general} is linear condition on $A^r$, we~see that $\dim |\II| \le 2$ at each point. First, we~will assume that $|\II|$ is 2-dimensional on an open set in $M$, and we will adapt the frame, by rotating the normal vectors, so that $A^r=0$ for $r>4$. (The case where $|\II|$ is 1-dimensional, including the case where $M\subset \R^4$, will be discussed later.) By Lemma~\ref{no-rank-one} we know that $A^r$ has rank 2 for some $r$. We~now further adapt the frame by rotating $\ve_1$, $\ve_2$ so that $v_2=0$ identically. With this adaptation, equation~\eqref{C1general} now reads
\begin{gather*}
v_1^2 A^r_{22} + \sec^2\phi \big(A^r_{11} + A^r_{22}\big)=0,\qquad r=3,4.
\end{gather*}
Thus the vectors $\big[A^3_{11}, A^3_{22}\big]$ and $\big[A^4_{11}, A^4_{22}\big]$ are linearly dependent, and we may rotate frame vectors $\ve_3$, $\ve_4$ so as to arrange that the diagonal entries of $A^4$ are zero.

We now consider the Pfaffian system for which this moving frame corresponds to an admissible integral manifold. As in Section~\ref{austeresec} we let $\F$ be the frame bundle of $\R^{n+1}$, and on $N=\R^{11}$ we take coordinates $\mu_a$, $v_1>0$, $B_{ij}$, $A^3_{ij}$ and $A^4_{12}$. (We will not yet impose conditions~\eqref{C1general} on the components of $A^3$, or impose~\eqref{C0expanded} on the components of $B$.) We define the $1$-forms $\beta_a$, $\theta^r$, and~$\Omega^r_a$ for $0\le a \le 2$ and $3 \le r \le n$ as in~\eqref{bigsysdef}, and let~$\I$ be the Pfaffian system on $\F \times N$ generated by $\vbeta$, $\vtheta$, and $\vOmega$.

As in the proof of Lemma~\ref{pqprolong}, we~compute that ${\rm d} \Omega^r_0 \equiv -A^r_{ij} \w^i_0 \wedge \w^j$ mod $\I_1$. Our assumption that $M$ is not minimal implies that $A^3$ has rank two, and thus by the Cartan Lemma on any admissible integral manifold there must be functions $P_{ij} =P_{ji}$ such that $\w^i_0 = P_{ij} A^3_{jk} \w^k$. (Note that these coefficients are \emph{different} from the $P^i_j$ introduced in the proof of Lemma~\ref{pqprolong}.) Using this, we~can compute that
\begin{gather*}
{\rm d} \Omega^4_0 \equiv A^4_{12} \big(A^3_{22} P_{22} - A^3_{11} P_{11}\big)\w^1 \wedge \w^2,
\end{gather*}
and thus there must be a function $p$ such that $P_{11}=p A^3_{22}$ and $P_{22} = p A^3_{11}$.

As in Sections~\ref{splitconesec} and~\ref{twistconesec}, we~construct a partial prolongation $\I^+$ by adjoining the $1$-forms
\begin{gather*}
\alpha_i := \w^0_i + P_{ij} A^3_{jk} \w^k, \qquad\text{with} \quad P_{11}=p A^3_{22},\quad P_{22} = p A^3_{11}
\end{gather*}
defined on $\F\times N \times \R^2$, with $p$ and $P_{12}$ as additional variables. Computing the system 2-forms then uncovers the additional integrability condition $P_{12}=-p A^3_{12}$. Restricting the system $\I^+$ to~the submanifold where this condition holds, one computes that
\begin{gather*}
{\rm d}\ve_0 \equiv p \big(A^3_{11} A^3_{22} - \big(A^3_{12}\big)^2\big) \big(\ve_1 \w^1 + \ve_2 \w^2\big)\mod \I^+,
\end{gather*}
indicating that $M$ must be a cone over a surface in $S^n$ if $p\ne0$, or a cylinder if $p=0$ identically.

Imposing the twisted-austere condition~\eqref{C1general} amounts to restricting to the smooth submanifold where the polynomial $Q_0:= v_1^2 A^3_{22} + \sec^2\phi\big(A^3_{11} + A^3_{22}\big)$ vanishes. We can compute that
\begin{gather*}
{\rm d}Q_0 \equiv -p \big(A^3_{11} A^3_{22} - \big(A^3_{12}\big)^2\big) \big(Q_0 + 4 v_1^2 A^3_{22}\big)\w^0 \mod \I^+,\, \w^1, \, \w^2.
\end{gather*}
Thus, admissible integral submanifolds lying within this locus must also have $p=0$ (in which case $M$ is a cylinder) or $v_1 A^3_{22}=0$ (in which case $Q_0=0$ implies that $M$ is minimal).

Now consider the case where $\dim |\II|=1$ at each point. We~will not initially assume that $v_2=0$. We again define the partial prolongation $\I^+$ by adjoining $1$-forms $\alpha_i := \w^0_i + P_{ij} A^3_{jk} \w^k$, but now we cannot assume any relations among the $P_{ij}$ other than $P_{ij}=P_{ji}$. Again, to impose~\eqref{C1general} we must restrict to the zero locus of
\begin{gather*}
Q_0:=v_1^2 A^3_{22} + v_2^2 A^3_{11} - 2 v_1 v_2 A^3_{12} + \sec^2\phi \big(A^3_{11} + A^3_{22}\big).
\end{gather*}
By computing ${\rm d} Q_k \equiv Q_{k+1} \w^0 \mod \I^+$, $\w^1$, $\w^2$, we~obtain additional polynomials $Q_1$, $Q_2$, $Q_3$ in whose zero locus any admissible integral manifold must lie. By rotating the frame we can arrange that $v_2=0$. This simplifies the polynomials, and we find that we must have $P_{11}=0$ or~$\det(P_{ij})=0$ on the common zero locus.

Setting $P_{11}=0$ and $v_2=0$ in $Q_1$ implies that $P_{12}=0$, and then substituting these in $Q_2$ implies that $P_{22}=0$, and thus in this case $M$ is a cylinder. Therefore if $M$ is not a cylinder, we must have $\det(P_{ij})=0$. This condition is invariant under rotating the vectors $\ve_1$, $\ve_2$, so in this case we may arrange that $P_{12}=P_{22}=0$ instead of $v_2=0$. Substituting in $Q_2$ yields $A^3_{11}=0$, and substituting these values into $Q_1$ gives either $A^3_{12}=0$ or $3 v_1^2=\sec^2\phi$. If $A^3_{12}=0$ then substituting into $Q_0$ yields that $M$ is totally geodesic. In the remaining case we have $P_{12}=P_{22}=A^3_{11}=0$ and $v_1$ has a constant value. Computing the prolongation of $\I^+$ in this case yields additional integrability conditions that imply $M$ must be totally geodesic.
\end{proof}

\begin{Remark}
In light of Proposition~\ref{thm1step3} and the classification theorem just proved, the only remaining possibility for twisted-austere 3-folds other than those discussed in Sections~\ref{cylindersec} and~\ref{austeresec} is that $M$ is a generalized helicoid in $\R^5$. However, in this case the only possible values for the $1$-form $\mu$ produce, via the Borisenko construction, a special Lagrangian submanifold in $\R^{10}$ that is a translation of the conormal bundle of~$M$ by a constant vector.
\end{Remark}

\appendix

\section{Appendix}% \label{sec:appendix}

In this appendix, we~collect some linear algebraic results that are needed in the main body of~the paper. These include two identities relating the elementary symmetric polynomials with the operation of taking the adjugate matrix, as well as some results on the spans of singular symmetric matrices.

\subsection[Identities relating sigma j and adj]
{Identities relating $\boldsymbol{\sigma_j}$ and $\boldsymbol{\adj}$} \label{sec:sigma-adj}
In this section we prove the two fundamental identities~\eqref{expandS1} and~\eqref{expandS2} that are crucially used in~our classification. We~prove a more general result than~\eqref{expandS2} valid for any $k$, whereas for~\eqref{expandS1} we restrict to the case $k=3$.

We first recall some basic facts about the elementary symmetric polynomials and the adjugate matrix, to fix notation. Let $A$ be a $k \times k$ matrix with complex entries. For $j=0, \dots, k$ we~define the $j^{\text{th}}$ elementary symmetric polynomial $\sigma_j(A)$ of $A$ by the expression
\begin{gather} \label{eq:sigmadefn}
\det (I + t A) = \sum_{j=0}^k t^j \sigma_j (A).
\end{gather}
It is clear from~\eqref{eq:sigmadefn} that $\sigma_j \big(P^{-1} A P\big) = \sigma_j (A)$ for all $j$. In particular we have $\sigma_0 (A) = 1$, $\sigma_1(A) = \tr A$, and $\sigma_k (A) = \det A$. Moreover, each $\sigma_j$ is a homogeneous polynomial of degree $j$ in the entries of~$A$, so $\sigma_j (\lambda A) = \lambda^j \sigma_j (A)$ for all $\lambda \in \C$.

Suppose that $A$ is invertible. Then we can compute
\begin{gather*}
\det\big(I + t A^{-1}\big) = \det \big(t A^{-1} \big(I + t^{-1} A\big) \big) = t^k (\det A)^{-1} \sum_{j=0}^k \big(t^{-1}\big)^j \sigma_j (A)
\\ \hphantom{\det\big(I + t A^{-1}\big)}
 {}= \frac{1}{(\det A)} \sum_{j=0}^k t^{k-j} \sigma_j (A) = \frac{1}{(\det A)} \sum_{j=0}^k t^j \sigma_{k-j}(A).
\end{gather*}
We deduce from the above and~\eqref{eq:sigmadefn} that
\begin{gather} \label{eq:sigmaAinverse}
\sigma_j \big(A^{-1}\big) = \frac{1}{\det A} \sigma_{k-j} (A).
\end{gather}

The \emph{adjugate matrix} $\adj A$ is the unique $k \times k$ matrix satisfying
\begin{gather*} %\label{eq:adjugatedefn}
(\adj A) A = A (\adj A) = (\det A)I.
\end{gather*}
It is clear that $\adj \big(P^{-1} A P\big) = P^{-1} (\adj A) P$. Moreover, $\adj A$ is homogeneous of order $k-1$, so~$\adj(\lambda A) = \lambda^{k-1} \adj A$ for all $\lambda \in \C$.

If $A$ is invertible then $\adj A = (\det A) A^{-1}$. We~can use~\eqref{eq:sigmaAinverse} and the homogeneity of $\sigma_j$ to compute that $\sigma_j(\adj A) = \sigma_j \big( (\det A) A^{-1} \big) = (\det A)^j \sigma_j \big(A^{-1}\big) = (\det A)^{j-1} \sigma_{k-j}(A)$. By the density of invertible matrices we conclude that
\begin{gather} \label{eq:sigmaadjA}
\sigma_j (\adj A) = (\det A)^{j-1} \sigma_{k-j} (A) \qquad \text{for all $A$.}
\end{gather}
Note that the above is well-defined for all $A$ even when $j=0$, in which case it just says $1=1$.

\begin{Lemma} \label{lemma:sigma-adj-1}
Let $A$ and $C$ be $k \times k$ complex matrices with $C$ invertible. Then we have
\begin{gather*} %\label{eq:sigma-adj-lemma-1}
\sigma_j \big(A C^{-1}\big) = (\det A)^{j+1-k} (\det C)^{-1} \sigma_{k-j} (C \adj A).
\end{gather*}
\end{Lemma}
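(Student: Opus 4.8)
The plan is to prove the identity first under the extra assumption that $A$ is invertible, where it falls out of three facts already recorded above, and then to remove that assumption by a polynomial/density argument.

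So suppose for the moment that $A$ (as well as $C$) is invertible. The one genuinely useful observation is that $(AC^{-1})^{-1} = CA^{-1}$. Applying the inversion identity \eqref{eq:sigmaAinverse} to the matrix $CA^{-1}$ with index $j$, and using $\det(CA^{-1}) = \det C / \det A$, I would get
\begin{gather*}
\sigma_j\big(AC^{-1}\big) = \sigma_j\big((CA^{-1})^{-1}\big) = \frac{1}{\det(CA^{-1})}\,\sigma_{k-j}\big(CA^{-1}\big) = \frac{\det A}{\det C}\,\sigma_{k-j}\big(CA^{-1}\big).
\end{gather*}
Next I would rewrite $CA^{-1} = (\det A)^{-1}\,C\adj A$, using $\adj A = (\det A)A^{-1}$, and invoke the homogeneity $\sigma_{k-j}(\lambda X) = \lambda^{k-j}\sigma_{k-j}(X)$ to obtain $\sigma_{k-j}(CA^{-1}) = (\det A)^{-(k-j)}\sigma_{k-j}(C\adj A)$. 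Substituting this into the previous display and collecting the powers of $\det A$ gives
\begin{gather*}
\sigma_j\big(AC^{-1}\big) = \frac{\det A}{\det C}\,(\det A)^{j-k}\,\sigma_{k-j}\big(C\adj A\big) = (\det A)^{j+1-k}(\det C)^{-1}\,\sigma_{k-j}\big(C\adj A\big),
\end{gather*}
which is exactly the assertion for invertible $A$.

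To pass to arbitrary $A$, fix $C$ invertible and multiply the claimed identity through by $(\det A)^{k-1-j}$ (a step that is unnecessary when $j \ge k-1$, where the exponent $j+1-k$ is already non-negative): both sides of
\begin{gather*}
(\det A)^{k-1-j}\,\sigma_j\big(AC^{-1}\big) = (\det C)^{-1}\,\sigma_{k-j}\big(C\adj A\big)
\end{gather*}
are polynomial in the entries of $A$, and they agree on the dense open set of invertible $A$ by the computation above, hence everywhere. The computation itself is entirely routine; the only points requiring a little care are the bookkeeping of the exponents of $\det A$ and this final density step, which is the reason I would state the result for invertible $C$ but allow $A$ to be arbitrary.
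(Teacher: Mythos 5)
Your argument is essentially the paper's own proof: invert $AC^{-1}$ to $CA^{-1}$, apply the identity $\sigma_j(X^{-1}) = (\det X)^{-1}\sigma_{k-j}(X)$, replace $A^{-1}$ by $(\det A)^{-1}\adj A$, and use homogeneity of $\sigma_{k-j}$, then pass to general $A$ by density. Your version of the density step is actually the more careful one — multiplying through by $(\det A)^{k-1-j}$ to make both sides honest polynomials in the entries of $A$ is exactly the right way to handle the negative exponent that would otherwise appear when $j<k-1$, a detail the paper glosses over with the bare phrase ``by the density of invertible matrices.''
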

\begin{proof}Assume first that $A$ is invertible. Using~\eqref{eq:sigmaAinverse}, we~compute
\begin{align*}
\sigma_j \big(A C^{-1}\big) & = \sigma_j \big( \big(C A^{-1}\big)^{-1} \big) = \frac{1}{\det \big(C A^{-1}\big)} \sigma_{k-j} \big(C A^{-1}\big) \\
& = \frac{\det A}{\det C} \sigma_{k-j} \big( (\det A)^{-1} C \adj A\big) = \frac{\det A}{\det C} (\det A)^{-(k-j)} \sigma_{k-j} (C \adj A) \\
& = (\det A)^{j+1-k} (\det C)^{-1} \sigma_{k-j} (C \adj A),
\end{align*}
as claimed. The result follows for all matrices $A$ by the density of invertible matrices.
\end{proof}

\begin{Proposition} \label{prop:sigma-adj-1}
Let $A$ and $B$ be $k \times k$ \emph{real} matrices, and assume that $B$ is symmetric. Let $C = I + {\rm i} B$. Then $C$ is invertible and we have
\begin{gather} \label{eq:sigma-adj-1}
\sigma_{k-1} \big(AC^{-1}\big) = \frac{\sigma_{k-1} (A) + {\rm i} \sigma_1 (B \adj A)}{\det C}.
\end{gather}
\end{Proposition}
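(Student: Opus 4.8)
The statement follows almost immediately by specializing the general identity of Lemma~\ref{lemma:sigma-adj-1} to the case $j=k-1$ and $C=I+\ri B$, so the plan is simply to assemble the pieces. First I would verify that $C=I+\ri B$ is invertible: since $B$ is real and symmetric, its eigenvalues are real, hence the eigenvalues of $C$ are of the form $1+\ri\lambda$ with $\lambda\in\R$, none of which is zero, so $\det C\ne 0$. This makes Lemma~\ref{lemma:sigma-adj-1} applicable (note that the lemma holds for \emph{all} $A$, invertible or not, by density, and that the exponent $(\det A)^{j+1-k}$ becomes $(\det A)^0=1$ when $j=k-1$ regardless of whether $\det A=0$).

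Next I would apply Lemma~\ref{lemma:sigma-adj-1} with $j=k-1$, which gives
\begin{gather*}
\sigma_{k-1}\big(AC^{-1}\big)=(\det A)^{0}(\det C)^{-1}\sigma_{1}(C\adj A)=\frac{\sigma_1(C\adj A)}{\det C}.
\end{gather*}
Then, using $\sigma_1=\tr$ and the linearity of the trace together with $C=I+\ri B$, I would expand
\begin{gather*}
\sigma_1(C\adj A)=\tr\big((I+\ri B)\adj A\big)=\tr(\adj A)+\ri\,\tr(B\adj A)=\sigma_1(\adj A)+\ri\,\sigma_1(B\adj A).
\end{gather*}
Finally, the identity~\eqref{eq:sigmaadjA} with $j=1$ reads $\sigma_1(\adj A)=(\det A)^{0}\sigma_{k-1}(A)=\sigma_{k-1}(A)$, and substituting this into the previous display yields exactly~\eqref{eq:sigma-adj-1}.

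There is no real obstacle here; the only points requiring a word of care are the invertibility of $C$ (handled above via the reality of the spectrum of $B$) and the observation that the potentially problematic factor $(\det A)^{j+1-k}$ degenerates harmlessly to $1$ in both invocations, so that the formula is valid even when $A$ is singular.
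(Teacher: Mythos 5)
Your proposal is correct and follows essentially the same route as the paper: establish invertibility of $C$ from the reality of the spectrum of $B$, apply Lemma~\ref{lemma:sigma-adj-1} with $j=k-1$ (where the $(\det A)^{j+1-k}$ factor collapses to $1$), expand $\sigma_1(C\adj A)$ by linearity of the trace, and finish with $\sigma_1(\adj A)=\sigma_{k-1}(A)$ from~\eqref{eq:sigmaadjA}. Your explicit remark that the degenerating exponent makes the formula valid even for singular $A$ is a nice clarification that the paper leaves implicit.
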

\begin{proof}
The result~\eqref{eq:sigma-adj-1} we seek to prove is similarity invariant, so we can assume by the spectral theorem that $B$ is diagonal with real eigenvalues $\lambda_1, \dots, \lambda_k$. But then $C = I + {\rm i} B$ is diagonal with nonzero eigenvalues $1 + {\rm i} \lambda_1, \dots, 1 + {\rm i} \lambda_k$, and hence invertible. Applying Lemma~\ref{lemma:sigma-adj-1} with $j = k-1$ gives
\begin{gather*}
\sigma_{k-1} \big(A C^{-1}\big) = (\det C)^{-1} \sigma_1 (C \adj A).
\end{gather*}
But $\sigma_1 = \tr$ is linear, so $\sigma_1 (C \adj A) = \sigma_1 ( (I + {\rm i} B) \adj A) = \sigma_1 (\adj A) + {\rm i} \sigma_1 (B \adj A)$. The~proof is completed upon noting that $\sigma_1 (\adj A) = \sigma_{k-1} (A)$ from~\eqref{eq:sigmaadjA}.
\end{proof}

The next result is used to establish the second fundamental identity of this section.
\begin{Lemma} \label{lemma:sigma-adj-2}
Let $B$ be a symmetric $3 \times 3$ real matrix and let $z \in \C$. Then we have
\begin{gather} \label{eq:sigma-adj-lemma2}
\adj(I + z B) = I + z( \sigma_1(B) I - B) + z^2 \adj B.
\end{gather}
\end{Lemma}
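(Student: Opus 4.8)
The plan is to reduce \eqref{eq:sigma-adj-lemma2} to a straightforward componentwise verification by exploiting invariance under orthogonal conjugation. First I would observe that both sides of \eqref{eq:sigma-adj-lemma2} are equivariant under $B \mapsto P^{-1}BP$ for $P \in \mathrm{O}(3)$: indeed $\adj(P^{-1}(I + zB)P) = P^{-1}\adj(I + zB)P$, while $\sigma_1(P^{-1}BP) = \sigma_1(B)$ and $\adj(P^{-1}BP) = P^{-1}(\adj B)P$. Hence the identity holds for $B$ if and only if it holds for $P^{-1}BP$. Since $B$ is real symmetric, the spectral theorem lets me assume $B = \operatorname{diag}(\lambda_1, \lambda_2, \lambda_3)$ with real $\lambda_i$, and then $I + zB = \operatorname{diag}(1 + z\lambda_1, 1 + z\lambda_2, 1 + z\lambda_3)$.

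In this diagonal situation everything is explicit. The adjugate of $\operatorname{diag}(a_1, a_2, a_3)$ is the diagonal matrix whose $i$th entry is the product of the other two $a_j$, so the first diagonal entry of $\adj(I + zB)$ is $(1 + z\lambda_2)(1 + z\lambda_3)$, and similarly for the rest. On the right-hand side of \eqref{eq:sigma-adj-lemma2}, $\sigma_1(B)I - B$ is diagonal with $i$th entry $\lambda_j + \lambda_l$ and $\adj B$ is diagonal with $i$th entry $\lambda_j\lambda_l$, where $\{j,l\} = \{1,2,3\}\setminus\{i\}$; hence the $i$th diagonal entry of the right-hand side equals $1 + z(\lambda_j + \lambda_l) + z^2\lambda_j\lambda_l = (1 + z\lambda_j)(1 + z\lambda_l)$, which matches $\adj(I + zB)$. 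This finishes the proof.

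A coordinate-free alternative, if one prefers to avoid the spectral theorem, is to start from the $3 \times 3$ Cayley--Hamilton identity $\adj M = M^2 - \sigma_1(M)M + \sigma_2(M)I$, valid for every $3\times 3$ matrix $M$. Applying it with $M = I + zB$ and substituting the elementary expansions $\sigma_1(I + zB) = 3 + z\sigma_1(B)$, $\sigma_2(I + zB) = 3 + 2z\sigma_1(B) + z^2\sigma_2(B)$, and $M^2 = I + 2zB + z^2B^2$, one collects powers of $z$: the $z^0$ term is $I$, the $z^1$ term is $\sigma_1(B)I - B$, and the $z^2$ term is $B^2 - \sigma_1(B)B + \sigma_2(B)I$, which is precisely $\adj B$ by Cayley--Hamilton applied to $B$. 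Either way the computation is short; there is no genuine obstacle here, the only point deserving a moment's care being the verification of the covariance of both sides under conjugation (first approach) or of the elementary identities for $\sigma_j(I + zB)$ (second approach).
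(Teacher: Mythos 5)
Your primary argument (diagonalize via the spectral theorem and verify entrywise) is exactly the proof in the paper, and it is correct as written. The coordinate-free alternative you sketch is a genuine variant worth noting: the $3\times 3$ identity $\adj M = M^2 - \sigma_1(M)M + \sigma_2(M)I$ (a rearrangement of Cayley--Hamilton, valid for all $M$ by density of invertibles), applied once to $M = I + zB$ and once to $M = B$, yields \eqref{eq:sigma-adj-lemma2} by collecting powers of $z$. That route avoids the spectral theorem entirely and, as a side benefit, shows the identity holds for \emph{arbitrary} $3\times 3$ complex matrices $B$, not merely real symmetric ones --- the symmetry hypothesis in the lemma is only there because that is the case needed in Proposition~\ref{prop:sigma-adj-2}. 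Your expansions $\sigma_1(I+zB) = 3 + z\sigma_1(B)$ and $\sigma_2(I+zB) = 3 + 2z\sigma_1(B) + z^2\sigma_2(B)$ are correct, and the $z^2$ coefficient does indeed reassemble into $\adj B$ by the same Cayley--Hamilton identity. Both routes are sound; the paper happens to take the first.
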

\begin{proof}
By similarity invariance, we~can assume that $B$ is diagonal with real entries. We~compute explicitly that
\begin{gather*}
B = \begin{pmatrix} \lambda & 0 & 0 \\ 0 & \mu & 0 \\ 0 & 0 & \nu \end{pmatrix} , \qquad \adj B = \begin{pmatrix} \mu \nu & 0 & 0 \\ 0 & \lambda \nu & 0 \\ 0 & 0 & \lambda \mu \end{pmatrix} ,
\\[1ex]
I + z B = \begin{pmatrix} 1 + z \lambda & 0 & 0 \\ 0 & 1 + z \mu & 0 \\ 0 & 0 & 1 + z \nu \end{pmatrix} ,
\\[1ex]
\adj(I + z B) = \begin{pmatrix} (1 + z \mu) (1 + z \nu) & 0 & 0 \\ 0 & (1 + z \lambda) (1 + z \nu) & 0 \\ 0 & 0 & (1 + z \lambda) (1 + z \mu) \end{pmatrix} .
\end{gather*}
Then~\eqref{eq:sigma-adj-lemma2} can be directly verified. For example, the $(1,1)$ entry gives
\begin{gather*}
 (1 + z \mu) (1 + z \nu) = 1 + z ( (\lambda + \mu + \nu) - \lambda) + z^2 \mu \nu,
 \end{gather*}
which is clearly true.
\end{proof}

Before we can state the final result of this section, we~need to introduce some more notation. It~is well-known that
\begin{gather*} %\label{eq:sigma2-newton}
\sigma_2 (A) = \frac{1}{2} ( \sigma_1 (A) )^2 - \frac{1}{2} \sigma_1 \big(A^2\big).
\end{gather*}
This is the simplest of \emph{Newton's identities}. It~can be verified directly for a diagonal matrix, which implies the general case because the diagonalizable matrices are dense. By homogeneity, $\sigma_2$ is a~quadratic form on the space of matrices, and by polarization we obtain an induced symmetric bilinear form, which we denote by $\{ \cdot, \cdot \}$. Explicitly,
\begin{gather} \label{eq:sigma2-metric}
2 \{ A, B \} = \sigma_1 (A) \sigma_1 (B) - \sigma_1 (AB).
\end{gather}

\begin{Remark} \label{rmk:sigma2-metric}
The positive-definite Frobenius norm $\langle \cdot, \cdot \rangle$ on real matrices is given by $\langle A, B \rangle = \sigma_1 \big(A^{\rm T} B\big) = \tr \big(A^{\rm T} B\big) = \sum_{i,j} A_{ij} B_{ij}$. Note that $\tr A = \langle A, I \rangle$. Thus the traceless symmetric matrices are orthogonal to the identity matrix $I$ with respect to $\langle \cdot, \cdot \rangle$. Let $A$, $B$ be symmetric. We~can write $A = \frac{1}{k} (\tr A) I + A_0$ where $A_0$ is traceless and similarly for $B$. Then using~\eqref{eq:sigma2-metric} we~have
\begin{align*}
2 \{ A, B \} & = (\tr A) (\tr B) - \langle A, B \rangle = (\tr A) (\tr B) - \frac{1}{k^2} (\tr A) (\tr B) \langle I, I \rangle - \langle A_0, B_0 \rangle \\
& = \frac{k-1}{k} (\tr A) (\tr B) - \langle A_0, B_0 \rangle.
\end{align*}
The above computation shows that for $k > 1$ the symmetric bilinear form $\{ \cdot, \cdot \}$ is a \emph{Lorentzian} inner product on the space of symmetric $k \times k$ real matrices, that is, with signature $\big(1, \frac{k(k+1)}{2} - 1\big)$. This fact is used several times in this paper.
\end{Remark}

\begin{Proposition} \label{prop:sigma-adj-2}
Let $A$ and $B$ be $3 \times 3$ \emph{real} matrices, and assume that $A$ is invertible and that~$B$ is symmetric. Let $C = I + {\rm i} B$. Then $C$ is invertible and we have
\begin{gather*} %\label{eq:sigma-adj-2}
\sigma_1 \big(AC^{-1}\big) = \frac{\sigma_1(A(I - \adj B)) + 2{\rm i} \{ A, B \} }{\det C}.
\end{gather*}
\end{Proposition}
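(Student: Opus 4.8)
The plan is to reduce everything to the formula $C^{-1} = (\det C)^{-1}\,\adj C$ and then expand $\adj C = \adj(I + \ri B)$ using Lemma~\ref{lemma:sigma-adj-2}. First I would check that $C$ is invertible exactly as in the proof of Proposition~\ref{prop:sigma-adj-1}: diagonalizing the symmetric matrix $B$, the matrix $C = I + \ri B$ becomes diagonal with entries $1 + \ri\lambda_j$, all nonzero. Then, since $\sigma_1 = \tr$ is linear, I can write
\begin{gather*}
\sigma_1\big(AC^{-1}\big) = \sigma_1\!\left(A\,\frac{\adj C}{\det C}\right) = \frac{\tr(A\,\adj C)}{\det C},
\end{gather*}
so the problem is reduced to computing $\tr(A\,\adj C)$.

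Next I would apply Lemma~\ref{lemma:sigma-adj-2} with $z = \ri$, which (using $\ri^2 = -1$) yields
\begin{gather*}
\adj C = \adj(I + \ri B) = (I - \adj B) + \ri\big(\sigma_1(B)\,I - B\big).
\end{gather*}
Multiplying by $A$ and taking the trace, and using linearity of $\tr$, gives
\begin{gather*}
\tr(A\,\adj C) = \tr\big(A(I - \adj B)\big) + \ri\big(\sigma_1(B)\tr A - \tr(AB)\big) = \sigma_1\big(A(I - \adj B)\big) + \ri\big(\sigma_1(A)\sigma_1(B) - \sigma_1(AB)\big).
\end{gather*}
Finally, I would recognize the imaginary part via the polarization identity~\eqref{eq:sigma2-metric}, namely $\sigma_1(A)\sigma_1(B) - \sigma_1(AB) = 2\{A,B\}$, which is valid for arbitrary (not necessarily symmetric) matrices. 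Substituting back produces exactly
\begin{gather*}
\sigma_1\big(AC^{-1}\big) = \frac{\sigma_1(A(I - \adj B)) + 2\ri\{A,B\}}{\det C},
\end{gather*}
as claimed.

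There is really no serious obstacle here; the content is entirely packaged in the already-proven Lemma~\ref{lemma:sigma-adj-2}. The one point worth noting is \emph{why} the hypothesis $k = 3$ appears: the clean expansion of $\adj(I + zB)$ as a quadratic polynomial in $z$ with coefficients $I$, $\sigma_1(B)I - B$, $\adj B$ is special to $3\times 3$ matrices (for general $k$ one gets a degree $k-1$ polynomial in $z$ whose coefficients involve all the $\sigma_j(B)$), so this is exactly what forces the dimension restriction in the statement. I would also remark that the invertibility of $A$ is not actually used in this argument and could be removed by a density argument, but since the statement as given includes it there is nothing further to do.
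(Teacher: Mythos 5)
Your proof is correct and follows the same route as the paper: invoke Lemma~\ref{lemma:sigma-adj-2} with $z=\ri$ to expand $\adj C$, multiply by $A$, take $\sigma_1=\tr$, and recognize $\sigma_1(A)\sigma_1(B)-\sigma_1(AB)$ as $2\{A,B\}$ via~\eqref{eq:sigma2-metric}. Your closing remarks (that the $k=3$ restriction stems from the quadratic-in-$z$ form of $\adj(I+zB)$, and that the invertibility of $A$ is not actually used) are both accurate observations, though not strictly necessary for the proof.
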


\begin{proof}
The invertibility of $C$ was proved in Proposition~\ref{prop:sigma-adj-1}. Applying Lemma~\ref{lemma:sigma-adj-2} with $z={\rm i}$ gives
\begin{gather*}
 \adj C = \adj(I + {\rm i} B) = I + {\rm i} \sigma_1 (B) I - {\rm i}B - \adj B.
 \end{gather*}
Multiplying both sides on the left by $A$ and writing $\adj C = (\det C) C^{-1}$ gives
\begin{gather*}
 (\det C) A C^{-1} = A + {\rm i} \sigma_1 (B) A - {\rm i} AB - A \adj B.
 \end{gather*}
Taking $\sigma_1 = \tr$ of both sides and using linearity gives
\begin{gather*}
(\det C) \sigma_1 \big(A C^{-1}\big) = \sigma_1 (A - A \adj B) + {\rm i} ( \sigma_1 (A) \sigma_1 (B) - \sigma_1 (AB) ).
\end{gather*}
Using~\eqref{eq:sigma2-metric} completes the proof.
\end{proof}

\subsection{Spans of singular symmetric matrices} \label{sec:app-singular}

Let $\calS_n$ denote the space of $n\times n$ symmetric matrices with real entries, and let $\D_n \subset \calS_n$ be the affine variety of symmetric matrices with vanishing determinant. We~determine the maximal linear subspaces of $\D_n$ up to $O(n)$-conjugation, for $n=2$ and $n=3$.

\begin{Proposition} \label{sing2max}
Let $\W \subset \D_2$ be a maximal linear subspace. Then $\dim \W =1$ and is $O(2)$-con\-jugate to the span of $\left(\begin{smallmatrix} 1 &0 \\ 0 & 0 \end{smallmatrix}\right)$.
\end{Proposition}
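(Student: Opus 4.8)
The plan is to recognise $\D_2$ as the null cone of the Lorentzian inner product $\{\cdot,\cdot\}$ on $\calS_2$ from Remark~\ref{rmk:sigma2-metric}. For $2\times 2$ matrices $\sigma_2=\det$, so $\{W,W\}=\det W$ and $\D_2=\{W\in\calS_2:\{W,W\}=0\}$. By polarisation, a linear subspace $\W\subseteq\D_2$ is exactly the same thing as a totally isotropic subspace for $\{\cdot,\cdot\}$: if $W,W'\in\W$ then $W+W'\in\W\subseteq\D_2$, hence $0=\{W+W',W+W'\}=\{W,W\}+2\{W,W'\}+\{W',W'\}=2\{W,W'\}$. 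By Remark~\ref{rmk:sigma2-metric} the form $\{\cdot,\cdot\}$ on $\calS_2$ is Lorentzian of signature $(1,2)$, so its Witt index is $\min(1,2)=1$; therefore every totally isotropic subspace has dimension at most $1$, and since $\D_2$ does contain nonzero matrices (e.g.\ $\left(\begin{smallmatrix}1&0\\0&0\end{smallmatrix}\right)$, which has vanishing determinant), a maximal one has dimension exactly $1$. This gives $\dim\W=1$. (If one prefers to avoid Remark~\ref{rmk:sigma2-metric}, parametrise $\calS_2$ by $(a,b,c)\mapsto\left(\begin{smallmatrix}a&b\\b&c\end{smallmatrix}\right)$, so $\D_2=\{ac-b^2=0\}$, and diagonalise $ac-b^2$ via $a=x+y$, $c=x-y$ into $x^2-y^2-b^2$, which has signature $(1,2)$ and so cannot vanish identically on a $2$-plane.)

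For the normal form, I would write $\W=\R W$ for some nonzero $W\in\D_2$. Since $\det W=0$ the matrix $W$ is singular, and since $W\neq 0$ it is not the zero matrix; as $W$ is a symmetric $2\times 2$ real matrix this forces $\operatorname{rank}W=1$ (its two real eigenvalues multiply to $0$, so exactly one is nonzero). Hence $W=\lambda\,vv^{\rm T}$, where $v\in\R^2$ is a unit eigenvector of $W$ for the nonzero eigenvalue $\lambda=\tr W\neq 0$. Choosing $Q\in O(2)$ with $Qv=e_1$, we get $QWQ^{-1}=QWQ^{\rm T}=\lambda\,(Qv)(Qv)^{\rm T}=\lambda\left(\begin{smallmatrix}1&0\\0&0\end{smallmatrix}\right)$, so $\W$ is $O(2)$-conjugate to the span of $\left(\begin{smallmatrix}1&0\\0&0\end{smallmatrix}\right)$, as claimed.

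The argument is essentially immediate once the signature statement is in place; the only genuine content is the dimension bound, i.e.\ the classical fact that a nondegenerate quadric cone of Lorentzian signature contains no $2$-plane, and I expect no real obstacle here. This Proposition is really the $n=2$ warm-up for the substantially more delicate $n=3$ analysis of maximal isotropic subspaces of $(\calS_3,\{\cdot,\cdot\})$, which has signature $(1,5)$.
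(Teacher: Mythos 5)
Your proof is correct and follows essentially the same route as the paper: identify $\det = \sigma_2$ on $\calS_2$ as a nondegenerate quadratic form of Lorentzian signature $(1,2)$, conclude that a linear subspace of its null cone $\D_2$ is totally isotropic and hence at most one-dimensional, and then diagonalize a rank-one representative by an orthogonal change of basis. Your version is simply more explicit — spelling out the polarization argument, the Witt-index bound, and the spectral decomposition $W = \lambda vv^{\rm T}$ — where the paper's proof is a one-line appeal to the same facts.
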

\looseness=1\begin{proof} On the space $\calS_2$, the determinant is a quadratic form with signature $(1,2)$, and thus~$\D_2$ contains no linear subspaces of dimension greater than one. The~result follows by diagona\-lization.
\end{proof}

\begin{Proposition} \label{sing3max}
Let $\W\subset \D_3$ be a maximal linear subspace. Then $\W$ is $3$-dimensional and is $O(3)$-conjugate to one of
\begin{gather*}
 \W_1 = \left\{\!\! \begin{pmatrix} * & * & 0 \\ * & * & 0 \\ 0 & 0 & 0\end{pmatrix}\!\! \right\} ,\qquad
\W_2 = \left\{\!\! \begin{pmatrix} * & * & * \\ * & 0 & 0 \\ * & 0 & 0\end{pmatrix}\!\! \right\} .
\end{gather*}
\end{Proposition}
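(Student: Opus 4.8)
The plan is to show that \emph{every} linear subspace $\W \subseteq \D_3$ is contained in an $O(3)$-conjugate of $\W_1$ or of $\W_2$. Granting this, the proposition follows at once: both $\W_1$ and $\W_2$ lie in $\D_3$ and are $3$-dimensional, and they are not $O(3)$-conjugate — $\W_1$ contains the rank-two matrix $\operatorname{diag}(1,1,0)$, whose nonzero eigenvalues have the same sign, whereas every rank-two element of $\W_2$ has eigenvalues of opposite sign (its quadratic form vanishes identically on the $2$-plane $\operatorname{span}(\ve_2,\ve_3)$, which is impossible for a semidefinite rank-two form) — so the maximal subspaces are exactly the conjugates of $\W_1$ and of $\W_2$, each of dimension $3$.

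To prove the containment, I would split on the maximal rank attained on $\W$. If every element of $\W$ has rank $\le 1$, a one-line computation with the $2\times 2$ minors of a pencil $W+tW'$ (as in the $n=2$ argument) shows $\dim\W \le 1$, and any line $\R\,vv^{\rm T}$ lies in a conjugate of $\W_1$. Otherwise $\W$ contains a rank-two $W_0$, which after an $O(3)$-rotation is $\operatorname{diag}(\lambda_1,\lambda_2,0)$ with $\lambda_1\lambda_2\ne0$. For any $W\in\W$ the pencil $W_0+tW$ is singular for all $t$, so $\det(W_0+tW)\equiv 0$; since $\det W_0=0$, the coefficients of $t^1$, $t^2$, $t^3$ all vanish. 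Using $\adj W_0=\operatorname{diag}(0,0,\lambda_1\lambda_2)$, the $t^1$-coefficient forces $W_{33}=0$, and then the $t^2$-coefficient forces $\lambda_1W_{23}^2+\lambda_2W_{13}^2=0$ (the $t^3$-coefficient is just $\det W=0$).

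If $\lambda_1\lambda_2>0$ this quadratic form is definite, so $W_{13}=W_{23}=0$ and $\W\subseteq\W_1$. If $\lambda_1\lambda_2<0$, I would polarize $\lambda_1W_{23}^2+\lambda_2W_{13}^2=0$ over $\W$: either $W\mapsto W_{13}$ vanishes identically on $\W$ (then so does $W\mapsto W_{23}$, and $\W\subseteq\W_1$ again), or there is a fixed nonzero constant $c$ with $W_{23}=c\,W_{13}$ for all $W\in\W$ and $\lambda_1c^2+\lambda_2=0$. In the latter case, substituting $W_{33}=0$ and $W_{23}=c\,W_{13}$ into $\det W$ factors it as $W_{13}^2\,\ell(W)$ for the linear functional $\ell(W)=-c^2W_{11}+2cW_{12}-W_{22}$; since $\W$ (a vector space) equals the union of the two subspaces $\{W_{13}=0\}\cap\W$ and $\{\ell=0\}\cap\W$, and the first is proper by assumption, we get $\ell\equiv 0$ on $\W$. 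Hence $\W$ lies in the $3$-dimensional space $V_c$ cut out by $W_{33}=0$, $W_{23}=c\,W_{13}$, $\ell(W)=0$. Finally I would check that $V_c$ is exactly the space of symmetric matrices whose quadratic form vanishes on the $2$-plane $L=\operatorname{span}\{(-c,1,0),(0,0,1)\}$ — a direct verification that $v^{\rm T}Ww=0$ for $v,w$ ranging over this basis of $L$ and $W\in V_c$, together with a dimension count — and such a space is $O(3)$-conjugate to $\W_2$.

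The main obstacle is the opposite-sign rank-two case $\lambda_1\lambda_2<0$: it requires the polarization of the quadratic relation, the factorization of the determinant into $W_{13}^2\,\ell(W)$, and the concrete identification of the resulting $3$-plane $V_c$ with a conjugate of $\W_2$ via the common isotropic $2$-plane $L$. By contrast, the rank-$\le1$ case and the same-sign rank-two case are immediate definiteness/minor computations, and the only other point needing care is the non-conjugacy of $\W_1$ and $\W_2$, handled by the eigenvalue-signature invariant noted above.
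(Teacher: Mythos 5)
Your proof is correct, and it takes a genuinely different route from the paper's. The paper's first dichotomy is whether the subspace contains a rank-one matrix, whereas yours is whether it contains a rank-two matrix, with a further split on the sign of $\lambda_1\lambda_2$ for that matrix. In the absence of rank-one elements, the paper leans on the Lorentzian signature $(1,5)$ of $\sigma_2$ on $\calS_3$ (Remark~\ref{rmk:sigma2-metric}) to force $\sigma_2<0$ on the whole subspace and thereby to put the chosen rank-two matrix in the form $\operatorname{diag}(1,-\lambda^2,0)$; you avoid that lemma entirely, because your constraints come directly from the coefficients of $t^1$ and $t^2$ in $\det(W_0+tW)$, and the sign of $\lambda_1\lambda_2$ is just a case split rather than a consequence of a signature argument. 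The paper also works with the hyperplane section $\V'=\{B_{11}=0\}$ and reassembles $\V = \{A_0\}_\R \oplus \V'$, whereas you impose the constraints on all of $\W$ at once, which removes one bookkeeping layer but demands a little more care in the polarization step (you correctly argue that the image of $\W$ in the $(W_{13},W_{23})$-plane is a linear subspace inside a union of two lines, hence a single line, so the sign $c$ is fixed). Finally, the paper exhibits the conjugation to $\W_2$ by an explicit rotation $R$ and verifies $RA_1R^{-1}\in\W_2$ by hand; your identification of $V_c$ as the annihilator of a fixed $2$-plane $L$ under the quadratic form, followed by transitivity of $O(3)$ on $2$-planes, is cleaner and makes the conjugacy conceptually transparent. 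One small stylistic note: your non-conjugacy argument via eigenvalue signs is fine, but it is even quicker to observe that $\sigma_2 \le 0$ identically on $\W_2$ while $\sigma_2(\operatorname{diag}(1,1,0))=1>0$ in $\W_1$, and $\sigma_2$ is an $O(3)$-invariant.
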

\begin{proof}
Let $\V \subset \D_3$ be an arbitrary linear subspace. If $\dim \V=1$ then by diagonalization $\V$ is conjugate to a subspace of $\W_1$. Thus we can assume $\dim \V \ge 2$.

\emph{Case one:} Suppose $\V$ contains a rank one matrix $A_0$. By $O(3)$-conjugation we can assume that
\begin{gather*} %\label{A0conj}
A_0 = \begin{pmatrix} 1 & 0 & 0 \\ 0 & 0 & 0 \\ 0 & 0 & 0 \end{pmatrix} .
\end{gather*}
Let $\V' = \{ B \in \V \,|\, B_{11}=0\}$. Then for any $B \in \V'$, by expansion along the top row, we have
\begin{gather} \label{zeroline1}
0 = \det(B+ t A_0) = \big(B_{22}B_{33}-B_{23}^2\big) t + \det B \qquad
\forall\, t \in \R.
\end{gather}
Let $p\colon \V' \to \calS_2$ denote the linear projection that gives the lower-right $2\times 2$ block. Then from the vanishing of the leading coefficient in~\eqref{zeroline1}, we~see that $p(\V') \subset \D_2$. By Proposition~\ref{sing2max}, we~know $\dim p(\V') \leq 1$. If $p(\V') = \{ 0 \}$ then $\V \subseteq \W_2$. If not then we can assume by conjugation that $B_{23}=B_{33}=0$ for all $B\in \V'$. Now equation~\eqref{zeroline1} reads $0 = B_{22} B_{13}^2$. If $B_{22}=0$ for all $B \in \V'$ then $\V = \{ A_0 \}_\R \oplus \V' \subset \W_2$. If not, then $\V'$ contains a matrix with $B_{13} = 0$ and hence by scaling a matrix $B_0$ of the form
\begin{gather*}
B_0 = \begin{pmatrix} 0 & \lambda & 0 \\ \lambda & 1 & 0 \\ 0 & 0 & 0 \end{pmatrix} ,\qquad \lambda \in \R.
\end{gather*}
In this case, let $\V'' = \ker p = \{ B \in \V' \,|\, B_{22}=0 \}.$ Then for any $B \in \V''$ we can compute that
\begin{gather*}
0 = \det (B + t B_0) = - B_{13}^2 t, \qquad\forall\, t \in \R.
\end{gather*}
Thus, $B_{13} = 0$ for all $B \in \V''$, so $\V'' \subset \W_1$ and $\V = \{ A_0 \}_\R \oplus \{ B_0 \}_\R \oplus \V'' \subset \W_1$.

\emph{Case two:} Suppose that $\V$ contains no rank one matrices. Thus every nonzero matrix in~$\V$ has exactly two nonzero eigenvalues and one zero eigenvalue. Hence, the space $\V$ does not intersect the cone defined by the equation $\sigma_2(A)=0$ except at the origin in $\calS_3$. Since $\sigma_2$ is a~quadratic form on $\calS_3$ with signature $(1,5)$, and $\V$ has dimension at least two, the restriction of~$\sigma_2$ to $\V$ will be negative on an open subset in $\V$. If $\sigma_2 > 0$ anywhere in $\V$, then by continuity there would be a nonzero $A \in \V$ such that $\sigma_2(A) = 0$. But we have ruled that out in this case, so we conclude that $\sigma_2(A) < 0$ for all nonzero $A \in \V$.

Now fix a rank two matrix $A_1$ in $\V$. Since $\sigma_2 (A_1) < 0$, it has one positive and one negative eigenvalue. Thus we can assume using $O(3)$-conjugation that $A_1$ takes the form
\begin{gather*}
A_1 = \begin{pmatrix} 1 & 0 & 0 \\ 0 & -\lambda^2 & 0 \\ 0 & 0 & 0 \end{pmatrix} , \qquad \lambda > 0.
\end{gather*}
Let $\V' = \{ B \in \V \,|\, B_{11}=0\}$. Then for $B \in \V'$ we must have
\begin{gather} \label{eq:sing2max-temp}
0 = \det(B + t A_1) = -\lambda^2 B_{33}t^2 + \big(B_{22}B_{33} - B_{23}^2 + \lambda^2 B_{13}^2\big)t + \det(B), \qquad \forall\, t \in \R.
\end{gather}
Thus, $B_{33}=0$ and $\V'$ either lies in the subspace where $B_{23}=\lambda B_{13}$ or in the subspace where $B_{23}=-\lambda B_{13}$; without loss of generality, we~can assume the former.

Under the assumption $B_{23}=\lambda B_{13}$, we have $\det(B) = B_{13}^2 (2\lambda B_{12}-B_{22})$. The function that takes a matrix in $\V'$ to its $(1,3)$-entry is a linear functional on the vector space $\V'$, so its kernel is either all of $\V'$ or a subspace of $\V'$ of positive codimension. Hence, either $B_{13} = 0$ for all matrices $B$ in $\V'$, or else $B_{13}$ is nonzero on a dense open subset of $\V'$. If $B_{13}=0$ for all matrices in $\V'$, then $\V' \subset \W_1$ and hence $\V \subset \W_1$. Otherwise, $B_{13} \ne 0$ on a dense open subset of $\V'$, and then $\det(B)=0$ from~\eqref{eq:sing2max-temp} implies that $B_{22}=2\lambda B_{12}$ for all matrices in $\V'$. In that case, conjugating the matrices in $\V'$ by the rotation matrix
\begin{gather*}
R = \begin{pmatrix} \cos \theta & \sin \theta & 0 \\ -\sin\theta & \cos \theta & 0 \\ 0 & 0 & 1\end{pmatrix} ,
\end{gather*}
where $\sin\theta=\lambda \cos \theta$, yields a matrix in $\W_2$.  Since $R A_1 R^{-1} \in \W_2$ as well, we conclude that $\V$ is conjugate to a subspace of $\W_2$.
\end{proof}

\subsection{A result for codimension reduction} \label{codim-reduction-sec}

In this section we establish a technical result that is used in the proofs of Propositions~\ref{thm1step3} and~\ref{non-split-prelim}.

Let $V$ and $W$ be real vector spaces. Given a linear subspace $L \subset S^k V^* \otimes W$, the \emph{prolongation} of $L$ is defined to be
\begin{gather*}
L^{(1)} = V^* \otimes L \cap S^{k+1} V^* \otimes W.
\end{gather*}
This definition arises in the study of the tableaux associated to systems of linear first-order PDE. See~\cite[Chapter VIII]{BCG3} for more details. For example, one can check that if $V=\R^n$, $W=\R$ and~$L$ is the set of symmetric $n\times n$ matrices in block form{\samepage
\begin{gather*}
\begin{pmatrix} 0 & B \\ B^t & 0 \end{pmatrix},
\end{gather*}
where $B$ is an arbitrary $k \times (n-k)$ matrix, then $L^{(1)} = 0$.}

We use the above definition (in the special case where $W=\R$) to formulate a codimension-reduction theorem for submanifolds of Euclidean space.

\begin{Theorem} \label{codim-reduction-thm}
Let $M^m \subset \R^N$ be a smooth connected submanifold with second fundamental form $\II$ such that the first normal bundle $N^1 M$ has constant rank~$\rho$. $($Recall that the fiber at~$p$ of~$N^1 M$ is the image of $\II_p\colon T_p M \otimes T_p M \to N_p M.)$ If at each point $p$ in $M$, the set~$|\II|_p$ as a~subspace of $S^2 T_p^* M$ satisfies $|\II|_p^{(1)}=0$, then $M$ is contained in a totally geodesic submanifold~$R$ of~dimension $m+\rho$ which is tangent to $T_p M \oplus N^1_p M$ at each $p \in M$.
\end{Theorem}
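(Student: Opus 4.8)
The plan is to use the method of moving frames to show that the first osculating distribution $\mathcal{O} := TM \oplus N^1M$ is \emph{parallel} in $\R^N$: along any curve in $M$ the subspace $\mathcal{O}_p \subset \R^N$ does not move. Since $M$ is connected, this forces $\mathcal{O}_p$ to be a fixed $(m+\rho)$-dimensional subspace $V \subset \R^N$, and since the position-vector differential $dx = e_i\,\omega^i$ takes values in $TM \subset V$, the submanifold $M$ lies in the single affine translate $R := x(p_0) + V$, which is totally geodesic and has tangent space $V = T_pM \oplus N^1_pM$ at every $p$. So the whole problem reduces to establishing the parallelism.

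To set up, I would take a local orthonormal frame field $e_1,\dots,e_N$ along $M$, adapted so that $e_1,\dots,e_m$ span $TM$, $e_{m+1},\dots,e_{m+\rho}$ span $N^1M$ (possible because $N^1M$ is a smooth subbundle of constant rank $\rho$), and $e_{m+\rho+1},\dots,e_N$ span the remaining normal directions. Using index ranges $i,j,k$ for the first block, $\mu,\nu$ for the second, $\xi,\eta$ for the third, and writing $\omega^B_A$ for the pulled-back Levi-Civita connection forms, we have $\omega^a=0$ for $a>m$ and $\omega^A_i = h^A_{ij}\omega^j$, where $h$ is the second fundamental form. The definition of $N^1M$ gives $h^\xi_{ij}=0$, i.e.\ $\omega^\xi_i = 0$ for all tangent $i$ and all ``far'' indices $\xi$; the geometric content of the conclusion is that in addition $\omega^\xi_\mu = 0$, for then $de_A = e_B\,\omega^B_A$ lies in $\mathcal{O}$ for every $A \le m+\rho$, which is exactly the parallelism.

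The core step is therefore to derive $\omega^\xi_\mu = 0$ from the prolongation hypothesis. Differentiating the relations $\omega^\xi_i = 0$ and using the structure equation $d\omega^\xi_i = -\omega^\xi_A\wedge\omega^A_i$, the tangent--tangent and far--far contributions vanish (using $\omega^\xi_j = -\omega^j_\xi = 0$ and $\omega^\eta_i = 0$), leaving $\sum_\mu h^\mu_{ij}\,\omega^\xi_\mu\wedge\omega^j = 0$ for each $i$ and $\xi$. Writing $\omega^\xi_\mu = p^\xi_{\mu k}\omega^k$ in the coframe $\omega^1,\dots,\omega^m$, this says the three-tensor $T^\xi_{ijk} := \sum_\mu p^\xi_{\mu k}\,h^\mu_{ij}$ is symmetric in $(j,k)$; it is already symmetric in $(i,j)$ because $h$ is, hence it is \emph{totally} symmetric, so $T^\xi \in S^3 T^*_pM$. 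On the other hand, writing $T^\xi = \sum_\mu \big(p^\xi_{\mu k}\omega^k\big)\otimes h^\mu$ and noting $h^\mu \in |\II|_p$, we see $T^\xi \in T^*_pM \otimes |\II|_p$. Therefore $T^\xi \in \big(T^*_pM\otimes|\II|_p\big)\cap S^3T^*_pM = |\II|_p^{(1)} = 0$. Since $N^1M$ has rank exactly $\rho$, the matrices $h^{m+1},\dots,h^{m+\rho}$ are linearly independent in $S^2T^*_pM$ --- a vanishing combination $\sum c_\mu h^\mu = 0$ would put $\sum c_\mu e_\mu \in N^1_pM$ orthogonal to the image of $\II_p$, hence equal to zero --- so $T^\xi = 0$ yields $p^\xi_{\mu k}=0$, i.e.\ $\omega^\xi_\mu = 0$, as needed.

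I expect the main obstacle to be this second computation: keeping the three index ranges straight so that the surviving term of $d\omega^\xi_i$ is precisely $\sum_\mu h^\mu_{ij}\,\omega^\xi_\mu\wedge\omega^j$, and then correctly identifying the symmetrized derivative $T^\xi$ as a genuine element of the prolongation space $|\II|_p^{(1)}$ --- this is where the hypothesis is used and where the definition of $L^{(1)}$ must be invoked exactly as stated. Everything after $\omega^\xi_\mu = 0$ is a short frame-bundle argument as indicated in the first paragraph.
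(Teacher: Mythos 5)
Your proposal is correct and follows essentially the same moving-frames argument as the paper: adapt a frame to $TM \oplus N^1M$, differentiate the vanishing of the far-normal second fundamental form components, identify the resulting totally symmetric tensor as an element of $|\II|_p^{(1)}=0$, use the linear independence of the $h^\mu$ (which you, unlike the paper, justify explicitly) to deduce $\omega^\xi_\mu = 0$, and conclude parallelism of the osculating distribution. The only cosmetic difference is that the paper phrases the key step as differentiating $d\ve_j$ rather than the equivalent $\omega^\xi_i = 0$.
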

\begin{proof}
Near any point of $M$, choose an orthonormal frame $\ve_1, \dots, \ve_N$ such that $\ve_1, \dots, \ve_m$ span $T_p M$ and $\ve_{m+1}, \dots, \ve_{m+\rho}$ span $N^1_p M$. (In what follows, use index ranges $1 \le \alpha, \beta \le N$, $1\le i,j,k \le m$, $m< a,b \le m+\rho$ and $r,s > m+\rho$.) Let $\omega^i$, $\omega^\alpha_\beta$ be the canonical and connection $1$-forms associated to this moving frame along $M$. Then
\begin{gather} \label{defh}
{\rm d}\ve_j \equiv e_a h^a_{jk} \omega^k \mod T_p M,
\end{gather}
where $|\II|_p$ equals the span of the symmetric matrices $h^a_{jk}$. Suppose that
\begin{gather*}
{\rm d}\ve_a = e_r q^r_{ai} \omega^i	\mod T_pM, N^1_p M.
\end{gather*}
Then differentiating~\eqref{defh} shows that
\begin{gather*}
0 = e_r q^r_{ai} \omega^i \wedge h^a_{jk} \omega^k.
\end{gather*}
For each $r$, it follows that $S^r_{ijk} = q^r_{ai} h^a_{jk}$ satisfies $S^r_{ijk} = S^r_{ikj}$, and hence belongs to the space~$|\II|^{(1)}$, which is zero. Since the matrices $h^a_{jk}$ are linearly independent, the $q^r_{ai}$ vanish. Hence the span of $\{ \ve_i, \ve_a\}$ is fixed. If we let $R$ be the totally geodesic submanifold tangent to~this space at one point $p \in M$, then connecting any other point $q\in M$ to $p$ with a smooth curve in $M$ shows that all other points of $M$ must lie inside $R$.
\end{proof}

A generalization of this result to submanifolds in a Riemannian manifold may be found in~\cite[Section~4.2]{BY}.

\subsection*{Acknowledgements}
The authors thank the anonymous referees for useful feedback and comments that improved the quality of the paper.

\pdfbookmark[1]{References}{ref}
\LastPageEnding

\end{document}